\newcommand{\cat}[1]{\mathbf{#1}}
\newcommand{\obj}{\mathrm{Obj}}
\newcommand{\id}{\mathrm{id}}
\newcommand{\ver}{\mathrm{Ver}}
\newcommand{\hor}{\mathrm{Hor}}
\newcommand{\dbl}{\mathrm{Dbl}}
\newcommand{\dom}{\mathrm{dom}}
\newcommand{\hdom}{\mathrm{hdom}}
\newcommand{\vdom}{\mathrm{vdom}}
\newcommand{\cod}{\mathrm{cod}}
\newcommand{\hcod}{\mathrm{hcod}}
\newcommand{\vcod}{\mathrm{vcod}}
\newcommand{\vmeet}{\wedge_v}
\newcommand{\hmeet}{\wedge_h}
\newcommand{\im}{\mathrm{Im}}
\newcommand{\cis}{\mathit{IS}}
\newcommand{\cig}{\mathit{IG}}
\newcommand{\cdig}{\mathit{DIG}}
\newcommand{\cdis}{\mathit{DIS}}
\newtheorem{lemma}{Lemma}
\newtheorem{proposition}{Proposition}
\newtheorem*{notation}{Notation}
\newtheorem{corollary}{Corollary}
\newtheorem{theorem}{Theorem}
\numberwithin{theorem}{section}
\numberwithin{lemma}{section}
\numberwithin{proposition}{section}
\numberwithin{conjecture}{section}
\theoremstyle{definition}
\newtheorem{definition}{Definition}[section]
\newtheorem{example}[definition]{Example}
\theoremstyle{definition} \newtheorem{note}{Note}
\theoremstyle{definition} \newtheorem{construction}[theorem]{Construction}
\begin{document}
\title{On Double Inverse Semigroups}
\author{Darien DeWolf and Dorette Pronk}
\date{}
%\address{Department of Mathematics and Statistics\\Dalhousie
%  University\\Halifax, Nova Scotia, Canada}
%\copyrightyear{2014}
%\keywords{double categories, higher categories, inverse semigroups}
%\amsclass{18D05, 18D99, 20M18} 

\maketitle

\begin{abstract}
We show that the two binary operations in double inverse semigroups, as considered by Kock [2007], necessarily coincide.
\end{abstract}
%
%%%%%%%%%%
%%%%%%%%%%
%
\section{Introduction}
%
%%%%%%%%%%
%%%%%%%%%%
%
There has recently been much interest in higher order group-like algebraic structures. An example of such higher dimensional structures are $n$-fold groupoids. The category $\cat{Grpd}^n$ of  \emph{$n$-fold groupoids} is defined inductively as follows: the category of 1-fold groupoids is $\cat{Grpd},$ the usual category of (small) groupoids, the category of  2-fold groupoids is the category of double groupoids (groupoids internal to $\cat{Grpd})$ and the category of $n$-fold groupoids is the category of groupoids internal to $\cat{Grpd}^{n-1}.$ $n$-fold groupoids are important for homotopy theory because they are conjectured to form an algebraic model for homotopy $n$-types. Recall that a topological space $X$ is said to be a \emph{homotopy $n$-type} whenever its $k$-homotopy groups $\pi_k(X)$ are trivial for all $k > n.$ The homotopy hypothesis conjectures that the category of homotopy $n$-types is equivalent to the category of $n$-fold groupoids (The single-object case of this is handled by Loday \cite{LODAY1982179} using $n$-cat-groups). In \cite{brownwebgroup}, Brown introduces two-dimensional group theory, defining a double group as a single-object 2-fold groupoid.

A more element-based description of a $2$-fold groupoid is obtained by defining a \emph{double group} $(G,\odot, \circledcirc)$ to be a set equipped with two group operations that satisfy the middle-four interchange law. That is, for all $a,b,c,d\in G,$ the following holds:
\[ (a\odot b)\circledcirc (c\odot d) = (a\circledcirc c)\odot ( b\circledcirc d).\]
The Eckmann-Hilton argument \cite{eckmannhilton61}, however, implies that such a structure exists if and only if both group operations are commutative and coincide. The crux of the Eckmann-Hilton argument happens to lie in the monoid structure of groups (in particular, the existence of identity elements for the two associative operations); the interchange law implies that both operations are commutative, share units and, in fact, coincide. 

On the other extreme, Edmunds \cite{edmunds2013} has recently studied what are known as \emph{double magma}: sets equipped with two binary operations that satisfy the interchange law  (but do not necessarily have units and are not necessarily associative). Edmunds explores the structure of such objects and provides constructions of double magma with two different binary operations.

Kock \cite{kock07} defines the notion of \emph{double semigroup}: a set equipped with two associative binary operations that satisfy the middle-four interchange. Without units, the Eckmann-Hilton argument does not apply and there exist double semigroups in which the operations are neither commutative nor coincide. Following Edmunds' terminology, we call a double semigroup in which the two operations do not coincide \emph{proper} \cite{edmunds2013}. Otherwise, we call it \emph{improper}. A \emph{double cancellative semigroup} is a double semigroup whose two operations are both left and right cancellative. Kock uses a tile-sliding argument to show that double cancellative semigroups are neccesarily commutative. Kock's argument can be extended to show that double cancellative semigroups are also improper \cite[Proposition 3.2.4]{dewolf13}. A semigroup $(S,\odot)$ is an inverse semigroup whenever, for any element $a\in S,$ there is a unique element $a^\odot\in S$ such that $a\odot a^\odot \odot a  = a$ and $a^\odot \odot a \odot a^\odot = a^\odot.$ A \emph{double inverse semigroup} is a double semigroup whose two operations are both inverse semigroup operations.  Kock uses another tile-sliding argument to show that double inverse semigroups are necessarily commutative. His argument, however, can not be immediately extended to show that double inverse semigroups are improper. Therefore, we seek another method to determine whether or not proper double inverse semigroups exist.

Studying pseudogroups of transformations in differential geometry, Ehresmann \cite{ehresmann57} defined an inductive groupoid as a groupoid with a  partial order on its arrows satisfying conditions on how the partial order behaves with respect to composition, inverses and (co)restrictions. In Ehresmann's work, there is a link between inductive groupoids and inverse semigroups. This is made more precise by Lawson \cite{lawson98}, who provides explicit constructions of an inverse semigroup given an inductive groupoid, and vice versa, and shows that these constructions induce an isomorphism between the category of inverse semigroups and the category of inductive groupoids, a result known as the Ehresmann-Schein-Nambooripad Theorem. 

This paper introduces the notion of a \emph{double inductive groupoid}, a double groupoid with a partial order on the double cells which shares the properties of the partial order in an inductive groupoid. We extend the isomorphism between inverse semigroups and inductive groupoids provided by Lawson to one between double inverse semigroups and double inductive groupoids.

This isomorphism is used, then, to develop some intuition about the structure of double inverse semigroups. It follows immediately that the two inverse semigroup operations must coincide on the shared idempotents, thus collapsing the inductive groupoid structures on the shared idempotents. It is then shown that double inverse semigroups induce natural single-object double inductive groupoids indexed by the shared idempotents, which are shown to be groups. It follows, then, that double inductive groupoids (and thus double inverse semigroups) are presheaves of Abelian groups on meet-semilattices and, finally, that double inverse semigroups must be improper.

Section 2 of this paper discusses double semigroups and introduces the problem of finding proper double inverse semigroups. Section 3 discusses inductive groupoids and constructions relating them to inverse semigroups. Section 4 introduces double inductive groupoids and the constructions relating them to double inverse semigroups. Finally, Section 5 establishes that double inductive groupoids, and thus double inverse semigroups, are exactly presheaves of Abelian groups on meet-semilattices.

\begin{notation}\emph{
\begin{itemize}
\item We identify the objects of a category with its identity morphism via the correspondence $ a \leftrightarrow 1_a.$
\item If $f:A\rightarrow B$ is an arrow in some category, we denote its domain as $f \dom = A$ and its codomain as $f \cod = B.$
\item If $f$ and $g$ are arrows in a category with $f\cod = g\dom,$ we denote their composite as $f; g,$ or simply $fg.$ We note the use of postfix notation for composition and evaluation.
\end{itemize}}
\end{notation}
%
%%%%%%%%%%
%%%%%%%%%%
%
\section{Double Semigroups}
\label{doublesemigroups}
%
%%%%%%%%%%
%%%%%%%%%%
%
It is an immediate consequence of the Eckmann-Hilton argument \cite{eckmannhilton61} that if a set is equipped with two group (in fact, monoid) operations satisfying the middle-four interchange law, then both operations must be commutative and must coincide. That is,  double groups, when defined in this way, are essentially Abelian groups. The Eckmann-Hilton argument, however, draws all of its power from the existence of shared multiplicative units. In this section, we consider objects in which the operations do not have units. The following definition was introduced by Kock \cite{kock07}.
\begin{definition}\emph{
A \emph{double semigroup} is an ordered triple $(S,\odot, \circledcirc)$ consisting of a set $S$ and two associative binary operations $\odot$ and $\circledcirc$ that satisfy the following middle-four interchange law: for any $a,b,c,d\in S,$ 
\[( a\odot b) \circledcirc (c\odot d) = (a \circledcirc c) \odot (b\circledcirc d). \qedhere \]}
\end{definition}
Using Edmunds' \cite{edmunds2013} terminology, we make the following definition:
\begin{definition}\emph{
A double semigroup $(S,\odot, \circledcirc)$ is said to be \emph{proper} whenever $\odot \neq \circledcirc.$ A double semigroup that is not proper is said to be \emph{improper}.}
\end{definition}
\begin{definition}\emph{
Let $(S,\circledcirc ,\odot )$ and $(S',\circledcirc ',\odot ')$ be double semigroups. A \emph{double semigroup homomorphism} $\varphi:S\rightarrow S'$ is a function $\varphi:S\rightarrow S'$ such that, for all $a,b\in S,$ $(a \odot  b)\varphi = a\varphi \odot'  b\varphi$ and $(a\circledcirc  b)\varphi = a\varphi \circledcirc'  b \varphi.$}
\end{definition}
\begin{example}\emph{
\begin{enumerate}[(i)]
\item Given any commutative semigroup  $(S,\odot),$ there is an improper double semigroup $(S,\odot,\odot).$
\item In particular, there is the trivial double semigroup, $1 = (\{0\}, \odot, \odot).$
\item (Left and right projections) Given any set $S,$ construct a (proper) double semigroup $(S,\odot, \circledcirc)$ by defining 
\[ a\odot b = a, \mbox{ for all } a,b\in S \mbox{ and } a\circledcirc b = b, \mbox{ for all } a,b\in S.\]
We note that these two semigroups are usually called the left- and right-zero semigroups on $S.$ The middle-four interchange law is indeed satisfied, for
\[ (a\odot b)\circledcirc (c\odot d) = c\odot d = c \mbox{ and } (a\circledcirc c)\odot (b\circledcirc d) = a\circledcirc c = c. \qedhere\]
\end{enumerate}}
\end{example}
Recall the following definitions from semigroup theory:
\begin{definition}\emph{
A semigroup $(S,\odot)$ is said to be an \emph{inverse semigroup} whenever $S$ possesses the following property: for any $a\in S,$ there exists a \emph{unique} element $a^\odot \in S$ such that $a\odot a^\odot \odot a = a$ and $a^\odot = a \odot a^\odot \odot a.$ Given an element $a\in S,$ the unique element $a^\odot\in S$ is called the \emph{semigroup inverse of $a$}.}
\end{definition}
\begin{example}\emph{
A \emph{partial automorphism} of a set $S$ is an isomorphism $\varphi:U \stackrel{\sim}{\to} V,$ where $U,V\subseteq S.$ For a given set $S,$ its collection of partial isomorphisms forms an inverse semigroup under (generalised) composition. The relationship between inverse semigroups and partial automorphisms can be thought of as a generalisation of that between symmetries and groups in the sense that every inverse semigroup is isomorphic to a subsemigroup of some symmetric inverse semigroup of partial automorphisms. This is known as the Wagner-Preston Theorem and we refer the reader to \cite{lipscomb96} for a development of this theory.}
\end{example}
\begin{definition}\emph{
Let $(S,\odot)$ be a semigroup. An element $e\in S$ is said to be \emph{idempotent} whenever $e\odot e = e.$ The set of all idempotents of $S$ is denoted by $E(S,\odot).$}
\end{definition}
The following is a characterisation of inverse semigroups. This is Theorem 3 in Chapter 1 of \cite{lawson98} and will be used in Section  \ref{mainresult} to prove our main result.
\begin{theorem}
Let $(S,\odot)$ be a regular semigroup. Then $(S,\odot)$ is an inverse semigroup if and only if $E(S,\odot)$ is commutative. \qed
\end{theorem}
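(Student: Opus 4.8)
The plan is to prove both implications directly from the defining equations, writing the operation $\odot$ multiplicatively (so $ab$ abbreviates $a\odot b$) and using only that ``regular'' means every element has at least one inverse, while ``inverse semigroup'' means that such an inverse is unique.

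For the forward direction, assume inverses are unique and let $e,f\in E(S,\odot)$. I would first show that a product of idempotents is again idempotent, and only then that it commutes. Write $x$ for the unique inverse of $ef$, so that $(ef)x(ef)=ef$ and $x(ef)x=x$. A short computation shows that $fxe$ is idempotent (using $x(ef)x=x$) and is itself an inverse of $ef$ (using $ee=e$, $ff=f$, and the two inverse equations for $x$). Uniqueness then forces $x=fxe$, so $x$ is idempotent; since an idempotent is its own inverse and the inverse relation is symmetric, a second appeal to uniqueness gives $ef=x$, whence $ef$ is idempotent. The same argument shows $fe$ is idempotent, and one then checks directly that $fe$ is an inverse of the idempotent $ef$. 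Uniqueness, together with the fact that $ef$ is its own inverse, yields $fe=ef$, i.e. $E(S,\odot)$ is commutative.

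For the converse, assume $(S,\odot)$ is regular with commuting idempotents and let $b$ and $c$ both be inverses of a fixed element $a$. The key observation is that each of $ab,ba,ac,ca$ is idempotent (for instance $(ab)(ab)=a(bab)=ab$), so all four commute with one another. Substituting $a=aca$ into $b=bab$ and regrouping gives $b=(ba)(ca)b$, and commuting the two idempotent factors collapses this to $b=cab$; symmetrically $c=bac$. Feeding these two relations back in shows $(ab)(ac)=ac$ and $(ac)(ab)=ab$, so commutativity of the idempotents $ab,ac$ gives $ab=ac$; finally $b=bab=b(ab)=b(ac)=bac=c$. Hence inverses are unique and $(S,\odot)$ is an inverse semigroup.

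I expect the main obstacle to be organising the forward direction cleanly: the assertion ``$ef$ is idempotent'' is not available at the outset, so it must be extracted from uniqueness of inverses via the auxiliary element $fxe$ before commutativity can even be phrased as ``$fe$ is an inverse of $ef$.'' In the converse the only delicate point is spotting the regrouping that isolates a commuting pair of idempotents (here $ba,ca$ and then $ab,ac$); once that substitution pattern is found, the remaining steps are routine cancellations using the inverse equations.
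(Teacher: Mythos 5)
Your proof is correct, and since the paper offers no argument of its own here (it simply cites Lawson, Theorem 3 of Chapter 1, with an immediate \qed), the relevant comparison is with the standard cited proof --- which yours matches essentially verbatim: idempotency of $e\odot f$ extracted from uniqueness via the auxiliary inverse $f\odot x\odot e$, then $f\odot e$ shown to be an inverse of the self-inverse idempotent $e\odot f$; and conversely, uniqueness of inverses forced by commuting the idempotents $a\odot b,\ b\odot a,\ a\odot c,\ c\odot a$. Nothing to fix.
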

Finally, we may define what is meant by a double inverse semigroup:
\begin{definition}\emph{
A double semigroup $(S,\odot, \circledcirc)$ is said to be a \emph{double inverse semigroup} whenever both $(S,\odot)$ and $(S,\circledcirc)$ are inverse semigroups.}
\end{definition}
\begin{theorem}[Kock \cite{kock07}]
\label{inversecommutethm}
Double inverse semigroups are commutative (in the sense that both of its operations are). \qed
\end{theorem}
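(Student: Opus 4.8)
The plan is to run a localized Eckmann--Hilton argument, using the idempotents supplied by the two inverse-semigroup structures as substitutes for the global units that the classical argument requires. First I would fix the tile picture: read $\odot$ as horizontal and $\circledcirc$ as vertical composition, so that the middle-four law asserts that the $2\times 2$ array $\left(\begin{smallmatrix} a & b \\ c & d\end{smallmatrix}\right)$ evaluates to the same element whether one composes rows first or columns first. The available tools are exactly the inverse-semigroup facts for each operation separately: idempotents commute (the characterization theorem quoted above), $(x\odot y)^\odot = y^\odot \odot x^\odot$, the elements $x\odot x^\odot$ and $x^\odot\odot x$ are idempotent, and $x = x\odot x^\odot\odot x$ (and symmetrically for $\circledcirc$).

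Next I would analyze how the two families of idempotents interact. Feeding an all-idempotent tile into the interchange law, for instance evaluating $\left(\begin{smallmatrix} e & e \\ e & e\end{smallmatrix}\right)$ for $e\in E(S,\odot)$ both ways, already yields relations between $e$, $e\circledcirc e$, and their images, and I would aim to push these relations to the statement that $E(S,\odot)=E(S,\circledcirc)$ and that $\odot$ and $\circledcirc$ agree on this common set $E$. This ``idempotent core'' is where a genuine Eckmann--Hilton collapse can occur, because on $E$ the two operations behave like commuting idempotent multiplications taking the same values.

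With the core understood, I would attach idempotent \emph{frames} to arbitrary elements: since $a = (a\odot a^\odot)\odot a$ and $a = a\odot(a^\odot \odot a)$, the idempotents $a\odot a^\odot$ and $a^\odot\odot a$ act as one-sided local units for $a$, and likewise for $\circledcirc$. The goal is then to build, for given $a,b$, a single tile padded with the appropriate frame idempotents so that evaluating it by rows yields $a\odot b$ while evaluating it by columns yields $b\odot a$ (once the frames are absorbed). This is the direct analogue of inserting units in the classical derivation $a\odot b = (a\circledcirc 1)\odot(1\circledcirc b) = \cdots = b\odot a$, with the constant unit $1$ replaced everywhere by element-dependent idempotents.

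The main obstacle is precisely this replacement. In the unit case a single element $1$ simultaneously acts as a two-sided unit for both operations, so it can be slotted into any cell; here each frame idempotent only absorbs on a particular side of a particular element, and a priori only for one of the two operations. The crux of the proof is therefore to show, using the coincidence of the operations on the idempotent core together with the commuting-idempotents property, that the relevant frame idempotents do act as units for the \emph{opposite} operation on the elements in play, so that the padded tile collapses both ways. Controlling this bookkeeping, rather than any single interchange step, is where I expect the real work to lie; once the frames are shown to behave as local units for both operations, the two evaluations of the tile deliver $a\odot b = b\odot a$, and the symmetric argument gives commutativity of $\circledcirc$.
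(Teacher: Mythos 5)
The paper itself offers no proof of this statement: it is imported from Kock, whose argument (as the introduction notes) is a tile-sliding computation carried out directly in the double semigroup, padding $2\times 2$ configurations with elements and their semigroup inverses and moving tiles past each other via interchange --- at no point does it require idempotents to act as units. Your proposal takes a different route, and its two load-bearing steps are genuine gaps. First, the ``idempotent core'' step does not come out of the computation you indicate: feeding the all-$e$ tile into interchange for $e\in E(S,\odot)$ yields only $e\circledcirc e = (e\circledcirc e)\odot (e\circledcirc e)$, i.e.\ that $e\circledcirc e$ is $\odot$-idempotent (and dually), which is far weaker than $E(S,\odot)=E(S,\circledcirc)$ or agreement of the operations on a common core. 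Worse, the direction of inference is circular relative to what is actually known: the paper's Lemma 5.1, which proves $a\odot b = a\circledcirc b$ on $E(S,\odot)\cap E(S,\circledcirc)$, explicitly invokes the commutativity theorem you are trying to prove, and the equality of the two idempotent \emph{sets} is only a consequence of the final improperness theorem --- in the intermediate constructions they are deliberately kept distinct, as vertical versus horizontal arrows, with only their intersection serving as objects. So the collapse you want to start from is downstream, not upstream, of the statement at hand.

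Second, the ``frames as local units for the opposite operation'' step, which you correctly identify as the crux, is not merely bookkeeping. In an inverse semigroup an idempotent $e$ satisfies $e\circledcirc x = x$ precisely when $x\circledcirc x^{\circledcirc}\leq e$ in the natural partial order for $\circledcirc$ --- and for this criterion even to apply, $e$ must be a $\circledcirc$-idempotent in the first place. Thus your claim that $a\odot a^{\odot}$ absorbs against $\circledcirc$ presupposes $a\odot a^{\odot}\in E(S,\circledcirc)$ (which is Gap 1 again) and then amounts to cross-frame dominations such as $a\circledcirc a^{\circledcirc}\leq a\odot a^{\odot}$; together with their symmetric versions these say that the four corner idempotents attached to $a$ coincide, which is exactly the structural fact this paper extracts only \emph{after} assuming Kock's theorem and running the full double inductive groupoid analysis of Section 5. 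No Eckmann--Hilton-style manipulation visibly produces it, since interchange alone never moves an element across its own frame, and you offer no derivation. As it stands, then, the proposal is a plan whose essential step is missing; Kock's sliding argument succeeds precisely because it never needs any element to act as a two-sided unit for both operations.
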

%
%%%%%%%%%%
%%%%%%%%%%
%
\section{Inductive Groupoids}
\label{inductivegroupoids}
%
%%%%%%%%%%
%%%%%%%%%%
%
To facilitate some understanding of the structure of double inverse semigroups, we first explore the structure of inverse semigroups. Lawson \cite{lawson98} has given an explicit and comprehensive exposition of Ehresmann, Schein and Nampooripad's characterization of inverse semigroups using inductive groupoids -- groupoids equipped with a property-rich partial order on its arrows. This section is a review of that work.
\begin{definition}\emph{
Let $(G,\bullet)$ be a groupoid and let $\leq$ be a partial order defined on the arrows of $G.$ We call $(G,\bullet,\leq)$ an \emph{ordered groupoid} whenever the following conditions are satisfied (where $G_1$ is the set of arrows in $G):$
\begin{enumerate}[(i)]
\item For all $x,y\in G_1,$ $x\leq y$ implies $x^{-1} \leq y^{-1}.$
\item For all $x,y,u,v\in G_1,$ if $x\leq y,$ $u \leq v$ and the composites $xu$ and $yv$ exist in $G,$ then $xu \leq yv.$
\item Let $x\in G_1$ and let $e$ be an object in $G$ such that $e \leq x\mathrm{dom}.$ Then there is a unique element $(e {}_* |x)\in G_1,$ called the restriction of $x$ by $e,$ such that $(e {}_* |x) \leq x$ and $(e {}_* |x)\mathrm{dom} = e.$
\item Let $x\in G_1$ and let $e$ be an object in $G$ such that $e \leq x\mathrm{cod}.$ Then there is a unique element $(x| {}_* e)\in G_1,$ called the corestriction of $x$ by $e,$ such that $(x| {}_* e) \leq x$ and $(x| {}_* e)\mathrm{cod} = e.$
\end{enumerate}
We say that $G$ is an \emph{inductive groupoid} if the further condition that the objects of $G$ (or, equivalently by our identification, the identity arrows of $G)$ form a meet-semilattice is satisfied.}
\end{definition}
\begin{definition}\emph{
A functor $F:G\rightarrow G'$ between two inductive groupoids is called \emph{inductive} if it preserves both the order and the meet operation on the set of objects and the order on the set of arrows.}
\end{definition}
Before we continue, we will point out that property (ii) of inductive groupoids, in which inverses preserve the order, indicates that  inductive groupoids are not a generalisation of ordered groups. For example, consider the usual order on the integers as an additive group. In the integers, we have $a \leq b$ implies $-a \geq -b,$ thus reversing the direction of the inequality for inverses. In fact, note that an inductive groupoid with one object has a trivial partial ordering.

It was mentioned in Section \ref{doublesemigroups} that every inverse semigroup is isomorphic to a subsemigroup of a symmetric inverse semigroup of partial isomorphisms. This isomorphism and the description of semigroups of partial automorphisms is the key motivation for the given definition of an inductive groupoid. In particular, the requirement that inverses preserve the order makes sense when we think of partial automorphisms as being ordered by inclusion of both their domains and images. We will demonstrate this now by constructing an inductive groupoid from partial automorphisms of a set.
\begin{example}\emph{
Consider the set $A = \{ a,b\}.$ Consider the following five partial automorphisms of $A:$
\begin{itemize}
\item $\id_A : a \mapsto a, b\mapsto b,$
\item $\sigma: a\mapsto b, b\mapsto a,$
\item $\id_a : a\mapsto a,$
\item $f : a\mapsto b$ 
\item $f^{-1} : b\mapsto a$ and
\item $\id_b: b\mapsto b.$
\end{itemize}
Each of these automorphisms have inverses, of course, and thus the following, call it $G,$ is a groupoid:
\[ 
\xymatrix{ 
\{a,b\} \ar@/^2pc/@(dl,ul)[]^{\id_A} \ar@(dr,ur)[]_\sigma && \{a\} \ar@<-0.5em>[r]_f \ar@(dl,ul)[]^{\id_a} & \{b\} \ar@<-0.5em>[l]_{f^{-1}}  \ar@(dr,ur)[]_{\id_b}
}  
\]
It is easy to check that if a partial order is defined on the arrows by $\alpha \leq \beta$ if and only if $\alpha\dom \subseteq \beta\dom $ and $\alpha \im \subseteq \beta \im,$ then the conditions required for $G$ being called an inductive groupoid are satisfied:
\begin{enumerate}[(i)]
\item Inverses preserve the partial order on the arrows. For example, $f \im = \{ b\} \subseteq \{a,b\} = \sigma \im,$ so that $ f\leq \sigma.$ Also, $ f^{-1}\im = \{a\} \subseteq \{a,b\} = \sigma^{-1}\im,$ so that $ f^{-1} \leq \sigma^{-1}.$
\item The compositions are well-behaved with respect to the partial order on the arrows. For example,  $f \leq \id_b$ and $\id_b \leq \id_b,$ while $f;\id_b = f \leq \id_b = \id_b; \id_b.$
\item We have unique restrictions when needed. Note that the objects of this groupoid are ordered by inclusion. Then both $\{a\} \leq \sigma\dom = \{a,b\}$ and $ \{b\} \leq \sigma\dom.$ The restrictions of $\sigma$ to $\{a\}$ and $\{b\}$ are $( \{a\} {}_*| \sigma) = f$ and $(\{b\}{}_*| \sigma) = f^{-1},$ respectively.
\item Similarly, we have all required corestrictions. \qedhere
\end{enumerate}}
\end{example}
We will now detail Lawson's two constructions (\cite[p.108]{lawson98} and \cite[p.112]{lawson98}): one for constructing inverse semigroups from inductive groupoids, and vice versa:
\begin{construction} \emph{
\label{igtois}
Given an inductive groupoid $(G,\bullet,\leq,\wedge),$ construct an inverse semigroup $\cis(G) = (S,\odot )$ with $S = G_1$ and, for any $a,b\in S,$
\[ a\odot  b = (a| {}_* a\mathrm{cod}\wedge b\mathrm{dom}) \bullet (a\mathrm{cod}\wedge b\mathrm{dom} {}_* |b).\]
Since, for any $a\in S = G_1,$ $a\mathrm{cod}$ and $a\mathrm{dom}$ are objects in $G,$ this product is always defined, since $G_0$ is a meet-semilattice with respect to the $\wedge$ operation used.}
\end{construction}
\begin{theorem}[\cite{lawson98}] \label{inversethm}
For any inductive groupoid $G,$ $\mathit{IS}(G)$ as defined in Construction \ref{igtois} is an inverse semigroup. \qed
\end{theorem}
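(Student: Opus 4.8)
The plan is to deduce the result from the characterisation of inverse semigroups quoted in Section~\ref{doublesemigroups}: a regular semigroup is an inverse semigroup exactly when its idempotents commute. Accordingly I would verify four things about $\cis(G) = (S,\odot)$ with $S = G_1$: that $\odot$ is associative, that $(S,\odot)$ is regular, that the idempotents of $(S,\odot)$ are precisely the identity arrows of $G$, and that these commute. The last three are short once the groupoid-theoretic bookkeeping is in place; the associativity is where the real work lies.

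First I would record the auxiliary order-groupoid identities that make every later computation routine. From conditions (i) and (ii) one derives that $u \leq x$ forces $u\dom \leq x\dom$ and $u\cod \leq x\cod$ (compose $u \leq x$ with $u^{-1}\leq x^{-1}$ to obtain $1_{u\dom} \leq 1_{x\dom}$ and $1_{u\cod}\leq 1_{x\cod}$), and hence, by the uniqueness clauses, that any $u \leq x$ is recovered as both a restriction and a corestriction of $x$, namely $u = (u\dom\, {}_*|\, x) = (x\, |{}_* u\cod)$. I would also need that restriction and corestriction commute with one another and distribute over $\bullet$ in the evident way; each of these follows from the uniqueness in conditions (iii)--(iv) together with condition (ii).

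With these lemmas in hand I would expand $(a\odot b)\odot c$ and $a\odot(b\odot c)$. Each side is a $\bullet$-composite of (co)restrictions of $a$, $b$, $c$ taken at various meets of objects, and I expect both to collapse, after pulling the outer meets back through the relevant (co)restrictions and using the associativity and commutativity of $\wedge$ in the semilattice $G_0$, to the single composite obtained by (co)restricting $a$, $b$, $c$ to $a\cod \wedge b\dom \wedge b\cod \wedge c\dom$ and composing in $G$. This matching is the main obstacle: one must correctly track how a meet appearing on the outer product transforms when it is pulled back along a restriction before it can be identified with the meet produced by the other association.

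The remaining points are direct computations. For regularity, given $a \in G_1$ the groupoid inverse $a^{-1}$ works: since $a\cod \wedge (a^{-1})\dom = a\cod$ the (co)restrictions are trivial, so $a \odot a^{-1} = a \bullet a^{-1} = 1_{a\dom}$ and then $1_{a\dom} \odot a = 1_{a\dom}\bullet a = a$, giving $a\odot a^{-1}\odot a = a$ and, symmetrically, $a^{-1}\odot a\odot a^{-1}=a^{-1}$. For the idempotents, a short calculation gives $1_x \odot 1_y = 1_{x\wedge y}$, so every identity is idempotent; conversely, if $e\odot e = e$, then writing $r = e\cod\wedge e\dom$ and comparing domains and codomains via the lemma above forces the two factors of $e\odot e$ to equal $e$, whence $e\cod = r = e\dom$ and $e\bullet e = e$, so $e = 1_{e\dom}$ since a groupoid has no non-identity idempotents. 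Commutativity of $E(S,\odot)$ is then immediate from $1_x \odot 1_y = 1_{x\wedge y} = 1_{y\wedge x} = 1_y \odot 1_x$, and the characterisation theorem finishes the proof.
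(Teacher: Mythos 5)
Your overall route -- establish associativity of the pseudoproduct, show $(S,\odot)$ is regular, identify $E(S,\odot)$ with the identity arrows via $1_x\odot 1_y = 1_{x\wedge y}$, and invoke the quoted characterisation of inverse semigroups as regular semigroups with commuting idempotents -- is exactly Lawson's, to whom the paper defers for this theorem (it gives no proof of its own), and your bookkeeping lemmas, the regularity computation, and the identification of the idempotents are all correct. The genuine gap is in the one step you yourself flag as the real work: the claimed common normal form for the triple product is false. The two associations do \emph{not} collapse to the composite of (co)restrictions of $a,$ $b,$ $c$ at the single object $a\cod\wedge b\dom\wedge b\cod\wedge c\dom.$ Concretely, in the inductive groupoid of the symmetric inverse monoid on $\{1,2\},$ take $a = \id_{\{1\}},$ $b$ the partial bijection $1\mapsto 2,$ and $c = \id_{\{2\}}.$ Then $a\cod\wedge b\dom = \{1\}$ and $b\cod\wedge c\dom = \{2\},$ so every (co)restriction occurring in $(a\odot b)\odot c$ and in $a\odot(b\odot c)$ is trivial and both products equal $b;$ but $a\cod\wedge b\dom\wedge b\cod\wedge c\dom = \emptyset,$ and (co)restricting $b$ to $\emptyset$ yields the empty map, not $b.$ The failure is structural: when an outer meet is pulled back through a (co)restriction of $b,$ objects are transported \emph{along} $b$ (conjugated by it), and this transport does not commute with $\wedge$ via the semilattice identities you invoke; in general there is no single object at which all three factors are birestricted.

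The correct matching, which is where Lawson's proof actually spends its effort, runs through the middle factor rather than a global meet. Writing $e = a\cod\wedge b\dom$ and $g = b\cod\wedge c\dom,$ one association produces the middle term $u_1 = \big((e\,{}_*|b)\,\big|{}_*\,(e\,{}_*|b)\cod\wedge c\dom\big)$ and the other produces $u_2 = \big(a\cod\wedge(b|{}_*g)\dom\;{}_*\big|\;(b|{}_*g)\big),$ and the heart of the argument is that $u_1 = u_2.$ This follows not from associativity of $\wedge$ but from your own uniqueness lemma: using monotonicity of restriction and corestriction (themselves consequences of uniqueness, e.g.\ $d\le d'$ gives $(d\,{}_*|b) = \big(d\,{}_*|(d'\,{}_*|b)\big)\le (d'\,{}_*|b)),$ one checks that each of $u_1$ and $u_2$ is the \emph{maximum} element of $\{\,v\le b : v\dom\le a\cod,\ v\cod\le c\dom\,\},$ hence they coincide; call the common value $u.$ The outer factors then agree automatically, since each is an element below $a$ (respectively $c)$ with codomain $u\dom$ (respectively domain $u\cod),$ so by uniqueness both associations equal $(a|{}_*u\dom)\bullet u\bullet(u\cod\,{}_*|c).$ With the associativity step repaired in this way, the remainder of your outline goes through verbatim and does complete the proof along the same lines as the cited source.
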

\begin{construction}\emph{
\label{istoig}
Given an inverse semigroup $(S,\odot)$ with the natural partial ordering $\leq,$ define a groupoid, $\cig (S),$ with the following data:
\begin{enumerate}[--]
\item Its objects are the idempotents of $S;$ ${\cig(S)}_0 = E(S) .$
\item Its arrows are the elements of $S.$ For any $s\in S,$ define $s\mathrm{dom} = s\odot s^\odot $ and $s\mathrm{cod} = s^\odot \odot s,$ where $s^\odot $ is the inverse of $s.$
\begin{itemize}
\item For any $a,b\in \cig(S)_1,$ if $a\mathrm{cod} = b\mathrm{dom},$ define the composite $a\bullet b = a \odot b,$ the product in $S.$ This composition is well defined (i.e., the composite has the proper domain and codomain) and therefore inherently associative: if $a\bullet b$ is defined, then $a\mathrm{cod} = b\mathrm{dom},$ or $a^\odot \odot a = b\odot b^\odot .$ Then $(a\bullet b)\mathrm{dom} =  (a\odot b)\mathrm{dom} = (a\odot b)\odot (a\odot b)^\odot  = a\odot b\odot b^\odot \odot a^\odot  = a\odot a^\odot \odot a\odot a^\odot  = a\odot a^\odot  = a\mathrm{dom}.$ Similarly, we have $(a\bullet b)\mathrm{cod} = b\mathrm{cod}.$ 
\item For any $a\in S,$ $(a\odot a^\odot ) \odot a = a\odot (a^\odot \odot a) = a.$ Then $a\odot a^\odot$ and $a^\odot \odot a$ act as the left and right identity, respectively, for composition (recall that we have identified the objects with identity arrows).
\item It follows, then, that every arrow is an isomorphism with $a^{-1} = a^\odot ,$ since $a^{-1}\bullet a = a^\odot \odot a = a\mathrm{cod}$ and $a\bullet a^{-1} = a\odot a^\odot  = a\mathrm{dom}$ (recall that we have identified the objects with identity arrows). \qedhere
\end{itemize}
\end{enumerate}}
\end{construction}
\begin{theorem}[\cite{lawson98}]
$\cig(S)$ is an inductive groupoid with, for all $a\in \cig(S),$ $(a| {}_* e) = ae$ for all objects $e\leq a\cod$ and $(e {}_* |a) = ea$ for all objects $e \leq a\dom.$ \qed
\end{theorem}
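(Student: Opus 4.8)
The plan is to verify the four ordered-groupoid axioms together with the meet-semilattice condition for $\cig(S)$, using the natural partial order on an inverse semigroup, namely $a \leq b$ if and only if $a = e \odot b$ for some idempotent $e \in E(S)$ (equivalently $a = a \odot a^\odot \odot b$). First I would establish the basic compatibility facts I will need repeatedly: that $a \leq b$ is equivalent to $a = f \odot b$ for a suitable idempotent $f$ on the codomain side as well, that idempotents commute (available from the inverse semigroup axioms via the characterization theorem in Section~\ref{doublesemigroups}), and that the product $ae$ of an arrow with an idempotent $e \leq a\cod$ does indeed land below $a$ and has codomain $e$. These are the standard lemmas underlying Lawson's construction, and I would cite or reprove them as needed.

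Next I would check each axiom in turn. For axiom~(i), that $a \leq b$ implies $a^{-1} \leq b^{-1}$, I would write $a = e \odot b$ and compute $a^\odot$ directly, using that inversion reverses products and fixes idempotents, to exhibit $a^\odot$ as $b^\odot$ precomposed with an idempotent. For axiom~(ii), compatibility of the order with composition, I would take $x \leq y$ and $u \leq v$ with $xu$, $yv$ defined, write $x = e \odot y$ and $u = f \odot v$, and manipulate the product $x \odot u$ into the form (idempotent)$\odot (y \odot v)$, here invoking the middle-four structure only insofar as idempotents commute so the idempotent factors can be collected on one side. For axioms~(iii) and~(iv), the existence and uniqueness of restrictions and corestrictions, I would verify that the claimed formulas $(e \,{}_*|\, a) = ea$ and $(a |\, {}_* e) = ae$ satisfy the defining conditions: that $ea \leq a$, that $(ea)\dom = e$ when $e \leq a\dom$, and symmetrically for corestriction, and then prove uniqueness from the definition of the order. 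Finally, the meet-semilattice condition on objects is immediate, since ${\cig(S)}_0 = E(S)$ and the idempotents of an inverse semigroup form a commutative idempotent semigroup, hence a meet-semilattice under $e \wedge f = e \odot f$.

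The step I expect to be the main obstacle is axiom~(ii), the compatibility of the partial order with composition, since it requires carefully tracking how the idempotent witnessing each inequality interacts with the domain/codomain matching that makes the composites defined, and ensuring that the two idempotent factors can be commuted past the relevant arrows and amalgamated into a single idempotent of the correct sort. The uniqueness clauses in~(iii) and~(iv) are a secondary subtlety: one must argue that any arrow below $a$ with the prescribed domain (respectively codomain) must equal the canonical restriction (respectively corestriction), which again reduces to the commutativity of idempotents but needs the order characterization to be applied in precisely the right direction.

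Since this is a restatement of a theorem attributed to Lawson and the proof is recorded in \cite{lawson98}, I would keep the verification concise, citing the standard inverse-semigroup identities and focusing the written argument on the restriction/corestriction formulas that are stated explicitly in the theorem, as those are the data most directly used in the sequel.
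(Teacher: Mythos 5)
Your overall plan is the standard verification from Lawson, which is exactly what the paper relies on: the paper itself gives no proof of this theorem, citing \cite{lawson98} and closing with \qed. Your treatment of axioms (i), (iii), (iv) and the semilattice condition is sound: the uniqueness clause in (iii) is precisely the observation that if $b \leq a$ with $b\dom = e$, then the order characterization $b = (b \odot b^\odot) \odot a$ immediately yields $b = e \odot a$, and dually for (iv); and the left/right equivalence of the natural order ($a = e \odot b$ for some idempotent $e$ iff $a = b \odot f$ for some idempotent $f$) is what makes your argument for (i) go through.

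The one step that would fail if executed as literally described is in axiom (ii), the very step you flag as the main obstacle. You say the idempotent factors "can be commuted past the relevant arrows," but idempotents do \emph{not} commute with arbitrary elements of an inverse semigroup (only with each other), and in a noncommutative inverse semigroup $y \odot f \neq f \odot y$ in general. The correct mechanism is conjugation, not commutation: writing $x = e \odot y$ and $u = f \odot v$, one uses the identity
\begin{equation*}
y \odot f = (y \odot f \odot y^\odot) \odot y,
\end{equation*}
where $y \odot f \odot y^\odot$ is idempotent --- this is checked by $(y f y^\odot)(y f y^\odot) = y \odot f \odot (y^\odot \odot y) \odot f \odot y^\odot = y \odot (y^\odot \odot y) \odot f \odot f \odot y^\odot = y \odot f \odot y^\odot$, and it is exactly here, in commuting the two idempotents $f$ and $y^\odot \odot y$, that the characterization theorem from Section 2 enters. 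Then $x \odot u = e \odot y \odot f \odot v = \bigl(e \odot (y \odot f \odot y^\odot)\bigr) \odot (y \odot v)$, and the left factor is a product of commuting idempotents, hence idempotent, giving $x \odot u \leq y \odot v$. (Note this holds for the semigroup product without any hypothesis that the composites are defined; the axiom follows by restriction.) Relatedly, your passing appeal to "the middle-four structure" is out of place: that is a double-semigroup notion, and nothing beyond commutativity of idempotents in the single inverse semigroup $(S,\odot)$ is needed or available here.
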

\begin{notation}\emph{
Denote the category of inverse semigroups and semigroup homomorphisms as $\cat{IS}.$ Denote the category of inductive groupoids and inductive functors as $\cat{IG}.$ }
\end{notation}
Being able to construct an inductive groupoid given any inverse semigroup and vice versa is extremely useful. Specifically, Lawson \cite{lawson98} provides proofs that these constructions induce functors $F:\cat{IG} \rightarrow \cat{IS}$ and $F': \cat{IS}\rightarrow \cat{IG}.$  This then establishes the following result relating the categories $\cat{IS}$ and $\cat{IG}:$
\begin{theorem}[Ehresmann-Schein-Nambooripad \protect{\cite[4.1.8]{lawson99}}]
The functors $F:\cat{IG} \rightarrow \cat{IS}$ and $F': \cat{IS}\rightarrow \cat{IG}$ form an isomorphism of categories. \qed
\end{theorem}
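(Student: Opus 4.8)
The plan is to establish the isomorphism by showing that the assignments underlying $F$ and $F'$ are mutually inverse on both objects and morphisms, so that the composites $F;F'$ and $F';F$ are literally the identity functors on $\cat{IG}$ and $\cat{IS}$ respectively. The two preceding theorems already guarantee that $\cis(G)$ is an inverse semigroup whenever $G$ is inductive and that $\cig(S)$ is an inductive groupoid whenever $S$ is inverse, and the preservation of identities and composites needed to make $F$ and $F'$ functors is a routine check; the real content is the pair of strict equalities $\cig(\cis(G))=G$ and $\cis(\cig(S))=S$, together with the corresponding statement for morphisms.

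First I would verify $\cis(\cig(S))=S$ for an inverse semigroup $(S,\odot)$. The arrow set of $\cig(S)$ is $S$ by construction, so the two semigroups share an underlying set and only the products must be compared. Using the restriction and corestriction formulas $(a| {}_* e)=a\odot e$ and $(e {}_* |b)=e\odot b$ recorded in the theorem after Construction \ref{istoig}, together with the fact—which rests on the commutativity of idempotents—that the meet of two elements of $E(S)$ is their product, the defining formula of Construction \ref{igtois} unwinds to $a\odot a^\odot\odot a\odot b\odot b^\odot\odot b$, which collapses to $a\odot b$ by the identity $a\odot a^\odot\odot a=a$. Along the way one checks that the relevant corestriction and restriction are composable in $\cig(S)$, which holds because the chosen meet is at once their codomain and their domain.

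Next I would verify $\cig(\cis(G))=G$ for an inductive groupoid $(G,\bullet,\leq,\wedge)$. The arrows of $\cig(\cis(G))$ are the elements of $\cis(G)$, namely $G_1$, so the arrow sets agree. I would first identify the semigroup inverse of $a$ in $\cis(G)$ with the groupoid inverse $a^{-1}$, whence $a\odot a^\odot=a\bullet a^{-1}=1_{a\dom}$; since in any inverse semigroup the idempotents are exactly the elements of the form $a\odot a^\odot$, this shows that the idempotents of $\cis(G)$ are precisely the identity arrows of $G$, and simultaneously that $a\mapsto a\odot a^\odot$ and $a\mapsto a^\odot\odot a$ reproduce $a\dom$ and $a\cod$. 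When $a\cod=b\dom$ the meet $a\cod\wedge b\dom$ is the common object and the (co)restrictions in the product formula are trivial, so $a\odot b=a\bullet b$ and the composition is recovered. Finally the natural partial order of $\cis(G)$ must be shown to coincide with $\leq$ on $G_1$: in the natural order $a\leq b$ means $a=e\odot b$ for some idempotent $e$, and any such $e\odot b$ is a restriction of $b$ and hence satisfies $e\odot b\leq b$ in $G$, while conversely any $a\leq b$ in $G$ equals $1_{a\dom}\odot b$; thus the two orders agree, and the meet of identity arrows recovers $\wedge$ on $G_0$ because meets of idempotents are products.

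For the morphism level I would show that a function $G_1\to G'_1$ is the arrow part of an inductive functor if and only if it is a semigroup homomorphism $\cis(G)\to\cis(G')$. An inductive functor preserves composition, (co)restriction, order and meet, hence preserves the product assembled from them; conversely a homomorphism $\theta$ preserves idempotents, so it sends objects to objects and preserves the order (definable by $a\leq b$ iff $a=e\odot b$), the meet (the product of idempotents), and the restrictions (products with idempotents), and is therefore inductive. Combined with the object bijection, this makes $F$ and $F'$ mutually inverse on objects and morphisms, i.e. an isomorphism of categories. I expect the main obstacle to be the reconciliation of the three descriptions of order in the step $\cig(\cis(G))=G$ — the ambient groupoid order $\leq$, the restriction axioms, and the semigroup-theoretic natural order — since this is where the full ordered-groupoid structure must be shown to be definable from the semigroup product; identifying the idempotents with the objects is the second most delicate point.
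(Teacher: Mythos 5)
Your proposal is correct and takes essentially the same route as the paper: the paper itself defers this statement to Lawson \cite{lawson99} with no written proof, but its proof of the double analogue in Section \ref{doubleinductivegroupoids} proceeds exactly as you do --- unwinding the product in $\cis(\cig(S))$ to $a\odot a^\odot\odot a\odot b\odot b^\odot\odot b = a\odot b,$ showing composites in $\cig(\cis(G))$ reduce to $a\bullet b$ because the meet $a\cod\wedge b\dom$ collapses and the (co)restrictions become trivial, and then checking the arrow functions. Your handling of the delicate points --- identifying the semigroup inverse with the groupoid inverse so idempotents are exactly identity arrows, and reconciling the natural order $a = e\odot b$ with the ambient $\leq$ via the uniqueness clause of the restriction axiom --- is the standard Ehresmann--Schein--Nambooripad argument and agrees with the cited source.
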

We will demonstrate this result by giving an example of an inverse semigroup and then calculating its corresponding inductive groupoid.
\begin{example}\emph{
Consider the following inverse semigroup $(S,\odot)$ (semigroup (5,415) of the \texttt{smallsemi} package of GAP \cite{GAP4}):
\[\begin{tabular}{c|ccccc}
$\odot$ &1 &2 &3 &4 &5  \\
\hline 
1 & 1&1&1&1&1 \\
2 & 1&1&4&1&2 \\
3 & 1&5&1&3&1 \\
4 & 1&2&1&4&1 \\
5 & 1&1&3&1&5
\end{tabular}\]
It is the case that $S$ is the only non-commutative inverse semigroup of order 5 containing a non-idempotent element. We note that idempotents of $S$ are $E(S) = \{ 1,4,5 \}.$ Note that $ 1 = 1\odot 4 = 1 \odot 5,$ so that $1 \leq 4$ and $1\leq 5.$ The elements 4 and 5 are incomparable, so that the semilattice structure on $E(S)$ is 
\[
\xymatrix{
4 \ar@{-}[rd] & & 5 \ar@{-}[ld] \\
& 1
}
\]
It is routine to check that the inverses of the elements in $S$ are 
\[  1^\odot  = 1, 2^\odot  = 3, 3^\odot = 2, 4^\odot  = 4, \mbox{ and } 5^\odot  = 5. \]
We note that $ 3\dom = 3\odot 3^\odot = 3\odot 2 = 5,$ $3\cod = 3^\odot\odot 3 = 2\odot 3 = 4,$ $2\dom = 4,$ and $2\cod = 5.$ The associated inductive groupoid, then, has
\begin{enumerate}[--]
\item Objects: $\{1,4,5\}$
\item (Non-identity) Arrows: $ \{ 2: 4\rightarrow 5, 3:5\rightarrow 4\}$
\end{enumerate}
And has the visual appearance (although this representation does not reflect the order structure and (co)restrictions)
\[ \xymatrix{ \raisebox{.5pt}{\textcircled{\raisebox{-.9pt} {1}}} \ar@(dl,ul)[]^1 && \raisebox{.5pt}{\textcircled{\raisebox{-.9pt} {4}}}\ar@/^0.5pc/[r]^2 \ar@(dl,ul)[]^4 & \raisebox{.5pt}{\textcircled{\raisebox{-.9pt} {5}}} \ar@/^0.5pc/[l]^3 \ar@(dr,ur)[]_5} \]}
\end{example}
%
%
%%%%%%%%%%
%%%%%%%%%%
\section{Double Inductive Groupoids}
\label{doubleinductivegroupoids}
%%%%%%%%%%
%%%%%%%%%%
%
In this section, we will extend Lawson's constructions of inductive groupoids and inverse semigroups to constructions involving double inverse semigroups. We will then construct an isomorphism of categories analogous to that between $\cat{IG}$ and $\cat{IS}.$  The first step in extending these constructions is to introduce the notion of a double inductive groupoid.
\begin{definition}\emph{
A \emph{double inductive groupoid}
\[ \mathcal{G} = (\obj(\mathcal{G}),\ver(\mathcal{G}),\hor(\mathcal{G}),\dbl(\mathcal{G}), \leq, \lesssim)\]
 is a double groupoid (i.e., a double category in which every vertical and horizontal arrow is an isomorphism and each double cell is an isomorphism with respect to both the horizontal and vertical composition) such that
\begin{enumerate}[(i)]
\item \label{horind} $(\ver(\mathcal{G}),\dbl(\mathcal{G}))$ is an inductive groupoid.
% Notation for horizontal groupoid
\begin{itemize}
\item  We denote the composition in this inductive groupoid -- the horizontal composition from $\dbl(\mathcal{G})$ -- with $\circ.$  We denote the partial order on this groupoid as $\leq.$ If $e$ and $f$ are horizontal identity cells (vertical arrows), we denote their meet as $e\hmeet f.$ For a cell $\alpha\in \dbl(\mathcal{G})$ and a vertical arrow $e \in \ver(\mathcal{G})$ such that $e\leq \alpha \hdom,$  we denote the horizontal restriction of $\alpha$ by $e$ by $(e {}_*|\alpha). $ Similarly, if $e$ is a vertical arrow such that $e \leq \alpha \hcod,$ we denote the horizontal corestriction of $\alpha$ by $e$ by $(\alpha |_* e).$
\end{itemize}
\item \label{verind} $(\hor(\mathcal{G}),\dbl(\mathcal{G}))$ is an inductive groupoid.
% Notation for vertical groupoid
\begin{itemize}
\item We denote the composition in this inductive groupoid -- the vertical composition from $\dbl(\mathcal{G})$ -- with $\bullet.$ We denote the partial order on this groupoid as $\lesssim.$  If $e$ and $f$ are vertical identity cells (horizontal arrows), we denote their meet as $e\vmeet f.$ For a cell $\alpha\in \dbl(\mathcal{G})$ and a horizontal arrow $e \in \hor(\mathcal{G})$ such that $e\lesssim \alpha \vdom,$  we denote the horizontal restriction of $\alpha$ by $e$ by $[e{}_*| \alpha]. $ Similarly, if $e$ is a horizontal arrow such that $e \lesssim \alpha \vcod,$ we denote the horizontal corestriction of $\alpha$ by $e$ by $[\alpha |_* e].$
\end{itemize}
\item \label{restcomp} When defined, vertical  (horizontal) composition and horizontal (vertical) (co)restriction must satisfy the middle-four interchange. For example, if $a$ and $b$ are double cells and $f$ and $g$ are horizontal arrows with $a\circ b$ defined, $f\circ g$ defined, $f \lesssim a\vcod$ and $g\lesssim b\vcod,$ then
\[ [a\circ b |_* f\circ g] = [a|_* f] \circ [b|_* g]. \]
\item \label{compmeet} When defined, vertical  (horizontal) composition and horizontal (vertical) meet must satisfy the middle-four interchange. For example, if $e,$ $f,$ $g$ and $h$ are horizontal arrows with $e\circ g$ and $f\circ h$ defined, then
\[ (e\vmeet f)\circ (g\vmeet h) = (e\circ g) \vmeet (f\circ h).\]
\item \label{restmeet} When defined, vertical  (horizontal) meet and horizontal (vertical) (co)restriction must satisfy the middle-four interchange. For example, if $e$ and $g$ are vertical arrows and $f$ and $h$ are objects with $f\lesssim e\vcod$ and $h\lesssim g\vcod,$ then
\[ [e\hmeet f |_* g\hmeet h] = [e|_* f] \hmeet [g|_* h]. \]
\item \label{restcomm} When defined, vertical (co)restriction and horizontal (co)restriction must satisfy the middle-four interchange. For example, if $a$ is a double cell, $f \lesssim a\vcod$ is a horizontal arrow, $g \leq a\hcod$ is a vertical arrow and $x = f\hcod \wedge g \vcod$ is an object, then
\[  ([a|_*  f] |_*  [g|_* x] ) = [ (a|_* g) |_*  (f|_* x) ] \]
\item \label{meetcomm} Vertical meet and horizontal meet must satisfy the middle-four interchange law; if $e,$ $f,$ $g$ and $h$ are both vertical and horizontal arrows, then
\[(e \hmeet f) \vmeet (g\hmeet h) = (e \vmeet g) \hmeet (f\vmeet h).\] 
\item \label{dommeet} When defined, vertical (horizontal) (co)domain must be functorial with respect to the horizontal (vertical) meet. For example, if $e$ and $f$ are horizontal arrows, then
\[ (e\vmeet f)\hdom = e\hdom \vmeet f\hdom.\]
\item \label{domrest} When defined, vertical (horizontal) (co)domain must be functorial with respect to the horizontal (vertical) (co)restriction. For example, if $a$ is a double cell and $f$ is a vertical arrow,  then
\[ (a |_* f)\hdom = (a\hdom |_* a\hdom).\qedhere\] 
\end{enumerate}
The reader may find the full list of conditions, as well as some diagrams to help understand them, in Appendix \ref{appendixa}.}
\end{definition}
\begin{definition} \emph{
Let $\mathcal{G}$ and $\mathcal{G'}$ be double inductive groupoids. A \emph{double inductive functor} $f:\mathcal{G}\rightarrow \mathcal{G'}$ is a double functor whose vertical arrow, horizontal arrow and double cell functions preserve all  partial orders and meets.}
\end{definition}
\begin{notation} \emph{
Consider a double inductive functor $f:\mathcal{G}\rightarrow \mathcal{G'}.$ We denote its object function by $f_0,$ its vertical arrow function by $f_v,$ its horizontal arrow function by $f_h$ and its double cell function by $f_d.$}
\end{notation}
%
%%%%%
% DIG to DIS construction
%%%%%
%
\begin{construction} \label{digtodis} \emph{
Given a double inductive groupoid 
\[\mathcal{G} = (\obj(\mathcal{G}),\ver(\mathcal{G}),\hor(\mathcal{G}),\dbl(\mathcal{G})),\]
 we construct a double inverse semigroup $\cdis(\mathcal{G}) = (S,\circledcirc  ,\odot  )$ as follows:
\begin{enumerate}[--]
\item Its elements are the double cells of $\mathcal{G};$ $S = \dbl(\mathcal{G}).$
\item For any $a,b\in S,$ define
\[ a \circledcirc   b = (a|{}_* a\mathrm{hcod} \wedge_h b\mathrm{hdom}) \circ (a\mathrm{hcod} \wedge_h b\mathrm{hdom} {}_*| b) \]
\item For any $a,b\in S,$ define
\[ a \odot   b = [a|{}_* a\mathrm{vcod} \wedge_v b\mathrm{vdom}] \bullet [a\mathrm{vcod} \wedge_v b\mathrm{vdom} {}_*| b] \qedhere\] 
\end{enumerate} }
\end{construction}
It follows from Lawson's analogous Construction \ref{igtois} that each of the operations above are independently inverse semigroup operations. That these two inverse semigroup operations satisfy the middle-four interchange law, however, has yet to be verified. This verification is straight forward -- but  tedious -- and requires all properties of double inductive groupoids.  A full proof may be found in Appendix \ref{appendixb}; however, we will provide some visuals which represent the main idea of the verification, which is this: break the 4-fold semigroup product into a 4-fold composite of double cells, use the middle-four interchange property of the double inductive groupoids and then reassemble this composite into the desired  4-fold semigroup product.
\begin{figure}[H]
\centering
\includegraphics[scale=0.45]{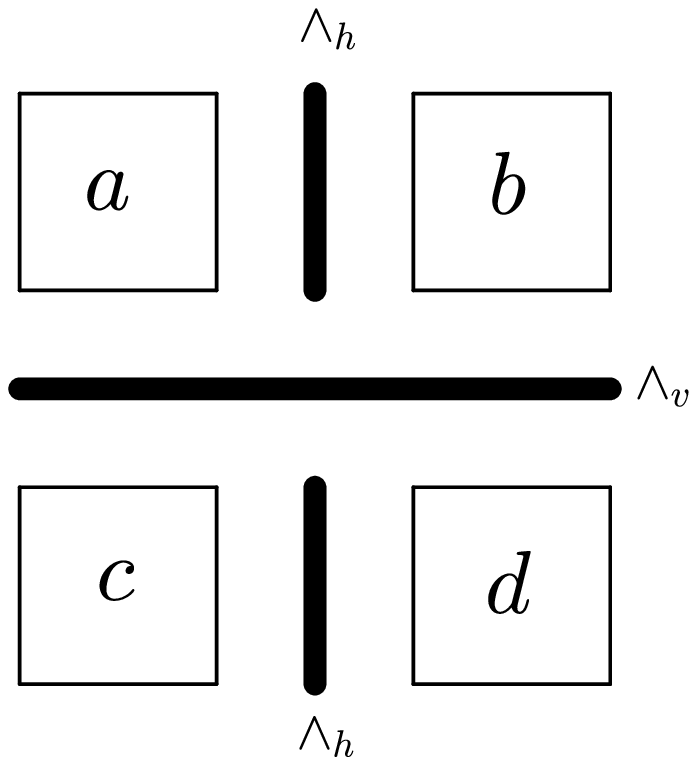} $\Longrightarrow$
\includegraphics[scale=0.45]{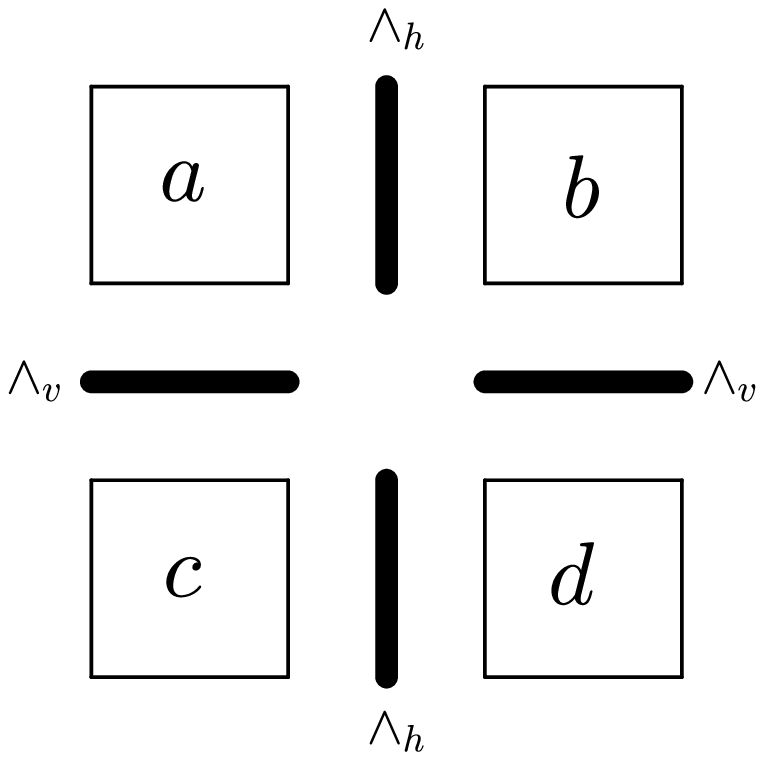} $\Longrightarrow$
\includegraphics[scale=0.45]{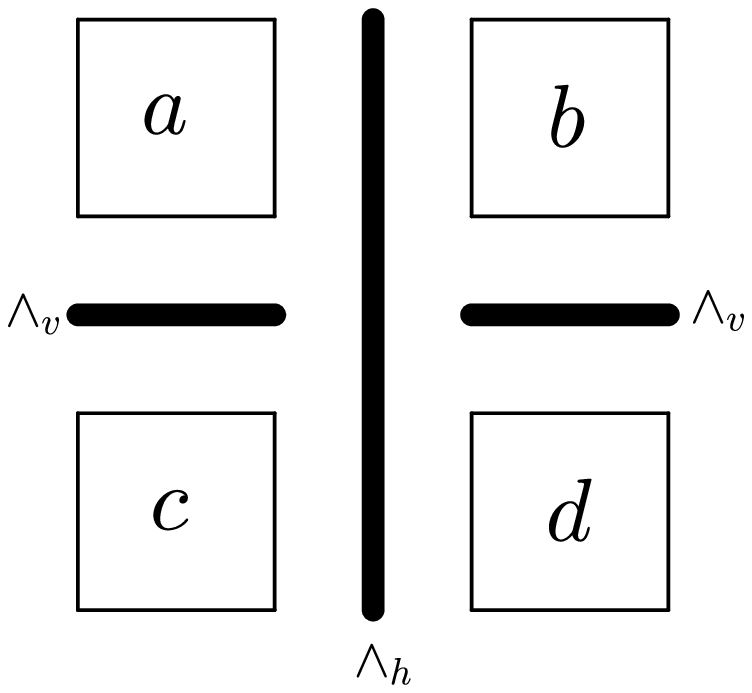}
\end{figure}
\begin{theorem}
If $\mathcal{G}$ is a double inductive groupoid, then $\cdis(\mathcal{G}),$ as constructed in Construction \ref{digtodis}, is a double inverse semigroup. \qed
\end{theorem}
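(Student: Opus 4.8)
The plan is to verify that $\cdis(\mathcal{G}) = (S, \circledcirc, \odot)$ is a double inverse semigroup by establishing three facts in order: that $(S, \circledcirc)$ is an inverse semigroup, that $(S, \odot)$ is an inverse semigroup, and that the two operations jointly satisfy the middle-four interchange law. The first two facts are immediate. Each operation is defined by the exact formula of Lawson's Construction~\ref{igtois}, applied to one of the two constituent inductive groupoids: $\circledcirc$ is the $\cis$-product for the horizontal inductive groupoid $(\ver(\mathcal{G}),\dbl(\mathcal{G}))$ of condition~(\ref{horind}), and $\odot$ is the $\cis$-product for the vertical inductive groupoid $(\hor(\mathcal{G}),\dbl(\mathcal{G}))$ of condition~(\ref{verind}). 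Hence by Theorem~\ref{inversethm} both $(S,\circledcirc)$ and $(S,\odot)$ are inverse semigroups, with no further work required.

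All the real content lies in the interchange law, namely showing that for all $a,b,c,d \in S$,
\[
(a \odot b) \circledcirc (c \odot d) = (a \circledcirc c) \odot (b \circledcirc d).
\]
First I would expand each of the four inner products using the definitions of $\circledcirc$ and $\odot$: each of $a\odot b$, $c\odot d$, $a\circledcirc c$, $b\circledcirc d$ is a two-fold composite of (co)restricted cells, so both sides of the desired equation become four-fold composites built from (co)restrictions of the four original cells $a,b,c,d$. Following the strategy indicated in the figure, I would first decompose the left-hand side into a rectangular arrangement of four double cells (the corners obtained by suitably (co)restricting $a,b,c,d$), then apply the middle-four interchange property of the double groupoid $\dbl(\mathcal{G})$ to reshuffle the order in which the horizontal compositions $\circ$ and vertical compositions $\bullet$ are performed, and finally reassemble the result into the four-fold composite computing the right-hand side.

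The main obstacle is \emph{not} the interchange step for the composition of cells itself, which is a single clean application, but rather the bookkeeping needed to show that the (co)restrictions and meets appearing on the two sides genuinely match up. After expanding, the corner cells on each side carry (co)restrictions indexed by meets of various (co)domains, and these indices must be shown equal. This is precisely where the remaining compatibility axioms of a double inductive groupoid are consumed: conditions~(\ref{restcomp}) and~(\ref{restmeet}) let one commute a (co)restriction past a composition and past a meet, condition~(\ref{compmeet}) lets one commute a meet past a composition, condition~(\ref{restcomm}) governs the interaction of the two (co)restriction directions, condition~(\ref{meetcomm}) handles the interchange of the two meet operations $\hmeet$ and $\vmeet$, and conditions~(\ref{dommeet}) and~(\ref{domrest}) ensure the (co)domain objects that index all these operations are themselves compatible. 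The argument is therefore a finite but genuinely intricate diagram-chase; I would organize it by tracking each of the four corner cells and its associated index object separately, reducing the equality of the two four-fold composites to a finite list of identities among objects and arrows, each of which is discharged by exactly one of the stated axioms. As the excerpt notes, the full calculation is relegated to Appendix~\ref{appendixb}, and the role of this sketch is to make clear that every one of the interchange axioms~(\ref{restcomp})--(\ref{domrest}) is needed precisely to force these indices to agree.
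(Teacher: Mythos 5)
Your proposal is correct and takes essentially the same approach as the paper: each operation is an inverse semigroup by Lawson's Construction \ref{igtois} and Theorem \ref{inversethm} applied to the two constituent inductive groupoids, and the interchange law is proved by expanding both sides into four-fold composites of (co)restricted cells, applying the middle-four interchange in $\dbl(\mathcal{G})$ once in the middle, and reassembling, with the matching of (co)restriction and meet indices discharged by the compatibility axioms exactly as in the paper's Appendix \ref{appendixb}. You leave the bookkeeping as a sketched diagram-chase where the appendix organizes it into three preparatory propositions, but the strategy and the role assigned to each axiom coincide with the paper's proof.
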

%
%%%%%
% DIS to DIG construction
%%%%%
%
\begin{construction} \label{distodig} \emph{
Given a double inverse semigroup $(S,\circledcirc  ,\odot  ),$ we construct a double inductive groupoid 
\[ \cdig(S) = (\obj(\cdig(S)), \ver(\cdig(S) ), \hor(\cdig(S) ), \dbl(\cdig(S) ))\]
 as follows:
\begin{enumerate}[--]
\item $\obj(\cdig(S)) = E(S,\circledcirc  )\cap E(S,\odot  ).$
\item $\ver(\cdig(S) ) = E(S,\circledcirc  ).$
\item $\hor(\cdig(S) ) = E(S, \odot  ).$
\item $\dbl(\cdig(S) ) = S(\circledcirc  ,\odot  ).$ Let $ a,b$ be any two double cells.
\begin{itemize}
\item We define $a\hdom = a\circledcirc   a^\circledcirc $ and $a\hcod = a^\circledcirc   \circledcirc   a.$ Whenever $a\hcod = b\hdom,$ the horizontal composite is defined as $a\circ b = a \circledcirc   b.$ Define a horizontal partial order $\leq$ by $a\leq b$ if and only if $a = e \circ  b = e\circledcirc  b$ for some vertical arrow $e.$ The horizontal meet of two vertical arrows $e$ and $f$ is defined to be $e\hmeet f = e \circledcirc   f.$ Note that since vertical arrows are contained in $S(\circledcirc  ,\odot  ),$ this $\leq$ is also a partial order on the vertical arrows $E(S,\circledcirc  ).$ If we have a vertical arrow $e\leq a\hdom,$ we define $(a|_*e) = a \circledcirc   e$ and if $e\leq a \hcod,$ we define $(e{}_*|a) = e \circledcirc  a.$
\item We define $a\vdom = a\odot   a^\odot  $ and $a\vcod = a^\odot   \odot   a.$ Whenever $a\vcod = b\vdom,$ the vertical composite is defined as $a\bullet b = a \odot   b.$ Define a vertical partial order $\lesssim$ by $a\lesssim b$ if and only if $a = e\bullet b = e \odot   b$ for some horizontal arrow $e.$ The vertical meet of two horizontal arrows $e$ and $f$ is defined to be $e\vmeet f = e \odot   f.$ Note that since horizontal arrows are contained in $S(\circledcirc  ,\odot  ),$ this $\lesssim$ is also a partial order on the horizontal  arrows $E(S,\odot  ).$ If we have a horizontal arrow $e\lesssim a\vdom,$ we define $[a|_*e] = a \odot   e$ and if $e\lesssim a \vcod,$ we define $[a{}_*|e] = e \odot   a.$ \qedhere
\end{itemize}
\end{enumerate} }
\end{construction}
\begin{note} \emph{
\begin{itemize}
\item The intersection $E(S,\odot) \cap E(S,\circledcirc)$ is non-empty since, for any $a\in S,$ $(a\odot a^\odot) \circledcirc (a \odot a^\odot )^\circledcirc \in E(S,\odot) \cap E(S,\circledcirc).$
\item From Lawson's Construction \ref{istoig}, it follows that $$(\ver(\cdig(S)), \dbl(\cdig(S)) ) \mbox{ and }(\hor(\cdig(S)), \dbl(\cdig(S)) )$$ are both inductive groupoids with the orders, meets, and (co)restrictions defined as above.
\item We have a groupoid structure in both directions and the horizontal and vertical compositions are defined by the horizontal and vertical semigroup products, respectively. These semigroup products satisfy the interchange law and thus so do the compositions. That is, if $S$ is a double inverse semigroup, then $\cdig(S),$ as constructed, is indeed a well defined double groupoid.
\item That $\cdig(S)$ satisfies the remaining properties of double inductive groupoids is a simple verification which is especially simplified by the commutativity of double inverse semigroups established in Theorem \ref{inversecommutethm}. For a full proof, see \cite[Lemmas 7.3.2 -- 7.3.11]{dewolf13}.
\end{itemize} }
\end{note} 
We summarise the preceding construction and its notes in the following theorem:
\begin{theorem}
If $S(\circledcirc , \odot )$ is a double inverse semigroup, then $\cdig(S),$ as constructed in Construction \ref{distodig}, is a double inductive groupoid. \qed
\end{theorem}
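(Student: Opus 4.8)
The plan is to verify, one at a time, that $\cdig(S)$ satisfies each of the defining conditions of a double inductive groupoid. First I would establish that $\cdig(S)$ is a genuine double groupoid: by Lawson's Construction~\ref{istoig} (Theorem~\ref{istoig}'s companion), both $(\ver(\cdig(S)), \dbl(\cdig(S)))$ with horizontal composition $\circ = \circledcirc$ and $(\hor(\cdig(S)), \dbl(\cdig(S)))$ with vertical composition $\bullet = \odot$ are inductive groupoids, so conditions~(\ref{horind}) and~(\ref{verind}) hold immediately. The compatibility of the two groupoid structures — that horizontal and vertical composition satisfy the middle-four interchange — follows because $\circ$ and $\bullet$ are exactly the semigroup operations $\circledcirc$ and $\odot$, which interchange by the definition of double semigroup. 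This handles the double-groupoid axioms and the two inductive-groupoid axioms.

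Next I would dispatch the remaining interchange conditions~(\ref{restcomp})--(\ref{domrest}). The crucial observation, and the reason these all reduce to routine semigroup manipulation, is that under Construction~\ref{distodig} every structural operation is literally one of the two semigroup products: meets are $e \hmeet f = e \circledcirc f$ and $e \vmeet f = e \odot f$; (co)restrictions are $(a|_* e) = a \circledcirc e$, $(e {}_*| a) = e \circledcirc a$, $[a|_* e] = a \odot e$, $[e {}_*| a] = e \odot a$; and (co)domains are built from $\circledcirc$, $\odot$, and the inverses $a^\circledcirc$, $a^\odot$. Consequently each stated identity — for instance condition~(\ref{restmeet}), $[e\hmeet f |_* g\hmeet h] = [e|_* f] \hmeet [g|_* h]$ — translates into an equation purely in the operations $\circledcirc$ and $\odot$, which I would verify by expanding both sides and applying the middle-four interchange law together with associativity. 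At this stage I would lean heavily on Theorem~\ref{inversecommutethm}, which tells us both operations are commutative; commutativity collapses the left/right asymmetry between restrictions and corestrictions and dramatically shortens the algebra, since one no longer needs to track the order of factors.

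The main obstacle I anticipate is \emph{bookkeeping} rather than any deep difficulty: conditions~(\ref{restcomm}) and~(\ref{domrest}) mix restrictions from both directions, so before the interchange law can be applied one must first check that all the meets and (co)restrictions appearing are actually defined — i.e.\ that the relevant domain/codomain idempotents compare correctly under $\leq$ or $\lesssim$. Verifying these well-definedness side-conditions (that, e.g., $a\mathrm{hcod}\wedge_h b\mathrm{hdom}$ genuinely lies below the appropriate domain) is where the argument is most error-prone, and it is the step I would treat most carefully. The identities themselves, once definedness is secured, all follow the same pattern: rewrite each side as a product in $S$, and invoke interchange plus commutativity.

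Rather than grind through all nine conditions in the main text, I would relegate the complete verification to the reference already cited, \cite[Lemmas 7.3.2 -- 7.3.11]{dewolf13}, and present here only the representative case of condition~(\ref{compmeet}), $(e\vmeet f)\circ(g\vmeet h) = (e\circ g)\vmeet(f\circ h)$, which unwinds directly to $(e\odot f)\circledcirc(g\odot h) = (e\circledcirc g)\odot(f\circledcirc h)$ — precisely the middle-four interchange of the double inverse semigroup. This single computation exhibits the mechanism behind every remaining case, and together with the two inductive-groupoid structures supplied by Lawson's construction it establishes that $\cdig(S)$ is a double inductive groupoid.
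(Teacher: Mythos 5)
Your proposal takes essentially the same route as the paper: the paper likewise obtains conditions (\ref{horind}) and (\ref{verind}) directly from Lawson's Construction \ref{istoig} applied to $(S,\circledcirc)$ and $(S,\odot)$, derives the interchange of the two compositions from the middle-four law of the double semigroup, and delegates the remaining order-theoretic axioms---observing that they reduce to routine semigroup computations greatly simplified by the commutativity of Theorem \ref{inversecommutethm}---to \cite[Lemmas 7.3.2 -- 7.3.11]{dewolf13}. Your representative verification of condition (\ref{compmeet}), unwinding $(e\vmeet f)\circ(g\vmeet h) = (e\circ g)\vmeet(f\circ h)$ to $(e\odot f)\circledcirc(g\odot h) = (e\circledcirc g)\odot(f\circledcirc h)$, is correct and matches the mechanism the paper relies on.
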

%
%%%%%
% Isomorphism of Categories
%%%%%
%
%
\begin{notation} \emph{
We denote the category of double inductive groupoids with double inductive functors as $\cat{DIG}$ and we denote the category of double inverse semigroups with double semigroup homomorphisms as $\cat{DIS}.$}
\end{notation} 
Our next goal is to prove that the constructions just introduced give rise to functors forming an isomorphism of the categories $\cat{DIG}$ and $\cat{DIS}.$
We will make use of a simple, yet important, observation which follows directly from the definition of double semigroup homomorphism:
\begin{note} \label{idemnote} \emph{
Let $\varphi: (S,\circledcirc ,\odot ) \rightarrow (S',\circledcirc ',\odot ')$ be a double semigroup homomorphism. If $e\in E(S,\odot )$ is an idempotent with respect to $\odot ,$ then $e\varphi = (e\odot  e)\varphi = e\varphi \odot ' e\varphi$ and $e\varphi \in E(S',\odot ')$ and thus $E(S,\odot ) \subseteq E(S,\odot )\varphi.$ Similarly, $E(S,\circledcirc ) \subseteq E(S,\circledcirc )\varphi.$ This tells us that $\varphi$ preserves idempotents in both directions. }
\end{note}
We will now define the functors of which the isomorphism comprises.
\begin{definition} \emph{ We define a functor $F: \cat{DIG} \rightarrow \cat{DIS}$ with the following data:
	\begin{itemize}
	\item On objects: For any double inductive groupoid $\mathcal{G},$ define $\mathcal{G}F = \cdis(\mathcal{G}),$
	as defined in Construction \ref{digtodis}. Recall that $\cdis(\mathcal{G}) = \dbl(\mathcal{G})$ with products defined, 
	for any $a,b\in \cdis(\mathcal{G}),$ as
	\begin{align*}
		a \circledcirc   b &= (a|{}_* a\mathrm{hcod} \wedge_h b\mathrm{hdom}) \circ (a\mathrm{hcod} \wedge_h b\mathrm{hdom} {}_*| b) \\
		a \odot   b &= [a|{}_* a\mathrm{vcod} \wedge_v b\mathrm{vdom}] \bullet [a\mathrm{vcod} \wedge_v b\mathrm{vdom} {}_*| b] 
	\end{align*}
	\item On arrows: For any double inductive functor $f:\mathcal{G}\rightarrow \mathcal{G'},$ define 
	$fF:\cdis(\mathcal{G})\rightarrow \cdis(\mathcal{G'})$ to be the double cell function $f_d$ of $f.$ \qedhere
	\end{itemize} }
\end{definition}
\begin{proposition}
The functor $F: \cat{DIG} \rightarrow \cat{DIS}$ as defined above is indeed a functor.
\end{proposition}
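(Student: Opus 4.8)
The plan is to verify the two things required of any claim that a given assignment is a functor: that $F$ is well-defined on objects and arrows, and that it respects identities and composition. The object part is essentially already discharged by the earlier theorem asserting that $\cdis(\mathcal{G})$ is a double inverse semigroup; so the real content is the arrow part. First I would take a double inductive functor $f:\mathcal{G}\rightarrow \mathcal{G'}$ and check that $fF = f_d$ is actually a morphism in $\cat{DIS}$, i.e.\ a double semigroup homomorphism $\cdis(\mathcal{G})\rightarrow \cdis(\mathcal{G'})$. Concretely, I must show that for all double cells $a,b\in\dbl(\mathcal{G})$,
\[ (a\circledcirc b)f_d = (a f_d)\circledcirc'(b f_d) \quad\text{and}\quad (a\odot b)f_d = (a f_d)\odot'(b f_d). \]

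The key step is unwinding the definition of $\circledcirc$ (and symmetrically $\odot$) in Construction \ref{digtodis} and pushing $f_d$ through it. Using the product formula,
\[ (a\circledcirc b)f_d = \bigl((a|{}_* a\hcod\hmeet b\hdom)\circ(a\hcod\hmeet b\hdom\, {}_*|\, b)\bigr)f_d, \]
so I would invoke, in order: that $f_d$ is a functor for the horizontal composition $\circ$ (turning the image of a composite into the composite of images), that $f_d$ preserves the horizontal meet $\hmeet$ on vertical arrows, that $f_d$ preserves horizontal (co)restrictions, and that the (co)domain functions $\hcod,\hdom$ commute with $f$ (so that $(a\hcod\hmeet b\hdom)$ maps to $(af_d)\hcod\hmeet(bf_d)\hdom$). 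These are precisely the preservation properties built into the definition of a double inductive functor. Assembling them shows the image of the formula for $a\circledcirc b$ equals the formula for $(af_d)\circledcirc'(bf_d)$, and an identical argument with $\bullet,\vmeet$, and the vertical (co)restrictions handles $\odot$. The structure of this argument is exactly parallel to the one Lawson uses to show that $\cis$ is functorial on $\cat{IG}$, applied once in each direction.

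Finally, functoriality proper is the easy part: since $(\id_{\mathcal{G}})_d = \id_{\dbl(\mathcal{G})}$ and double cell functions compose componentwise, $(\id_{\mathcal{G}})F = \id_{\cdis(\mathcal{G})}$ and $(f g)F = (fg)_d = f_d g_d = (fF)(gF)$. The main obstacle I anticipate is purely bookkeeping rather than conceptual: one must be careful that each preservation hypothesis is applied to the correct arguments and in the correct order, and in particular that the restriction $(a|{}_* a\hcod\hmeet b\hdom)$ is well-typed after applying $f_d$ — this relies on $f$ preserving the order $\leq$ so that $e\leq a\hcod$ implies $ef_v\leq (af_d)\hcod$, guaranteeing the corestriction on the target side exists. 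Once the typing is verified, the equalities follow by substituting the preservation identities one at a time, and no genuinely new idea beyond the definition of double inductive functor is needed.
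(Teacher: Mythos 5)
Your proposal is correct and takes essentially the same route as the paper: the paper likewise notes that functoriality is immediate because double cell functions compose componentwise, and then verifies the homomorphism property by pushing $f_d$ through the product formula of Construction \ref{digtodis}, invoking in turn that $f_d$ preserves composition, (co)restrictions, meets, and (co)domains, with the second operation handled symmetrically. Your extra remark about well-typedness of the corestriction on the target side (via order preservation) is a small refinement the paper leaves implicit, not a divergence in method.
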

\begin{proof}
	Since the arrow function of $F$ returns the double cell function of a double functor, functoriality is immediate from the definition of functor composition.
	One also notes that, for any $a,b\in \cdis(\mathcal{G}),$ we have 
	\begin{align*}
		( a\odot  b) fF &= (a\odot  b)f_d \\
		&= \Big([a|{}_* a\mathrm{vcod} \wedge_v b\mathrm{vdom}] \bullet [a\mathrm{vcod} \wedge_v b\mathrm{vdom} {}_*| b]\Big)f_d \\
		&= \Big([a|{}_* a\mathrm{vcod} \wedge_v b\mathrm{vdom}] \Big) f_d \bullet' \Big([a\mathrm{vcod} \wedge_v b\mathrm{vdom} {}_*| b]\Big)f_d \\
				&\hspace{2pc} \mbox{ ($f_d$ preserves composition)} \\
		&= [af_d |{}_* (a\mathrm{vcod} \wedge_v b\mathrm{vdom})f_d] \bullet' [(a\mathrm{vcod} \wedge_v b\mathrm{vdom} )f_d{}_*| b f_d] \\
				&\hspace{2pc} \mbox{ ($f_d$ preserves (co)restrictions)}\\
		&= [af_d |{}_* a\mathrm{vcod}f_d \wedge_v b\mathrm{vdom}f_d] \bullet' [a\mathrm{vcod}f_d \wedge_v b\mathrm{vdom}f_d {}_*| b f_d]\\
				&\hspace{2pc} 	\mbox{ ($f_d$ preserves meets)} \\
		&= [af_d |{}_* a f_d \mathrm{vcod}\wedge_v b f_d \mathrm{vdom}] \bullet' [a f_d \mathrm{vcod} \wedge_v b f_d\mathrm{vdom} {}_*| b f_d] \\
				&\hspace{2pc} 	\mbox{ ($f_d$ preserves (co)domains)}\\
		&= af_d \odot ' bf_d \\
		&= afF \odot ' bfF.
	\end{align*}
	Similarly, we have that $(a\circledcirc  b)fF = afF \circledcirc ' bfF.$ That is, $fF$ is justifiably a double semigroup homomorphism.
\end{proof}
\begin{definition} \emph{ We define a functor $F':\cat{DIS} \rightarrow \cat{DIG}$ with the following data:
	\begin{itemize}
	\item On objects: For any double inverse semigroup $S,$ define $SF' = \cdig(S),$ as defined in
	Construction \ref{distodig}. Recall that $\cdig(S)$ has the following data:
		\begin{enumerate}[--]
		\item $\cdig(S)_0 = E(S,\odot )\cap E(S,\circledcirc ).$
		\item $\ver(\cdig(S)) = E(S,\circledcirc ).$
		\item $\hor(\cdig(S)) = E(S,\odot ).$
		\item $\dbl(\cdig(S)) = S(\odot ,\circledcirc ).$
		\end{enumerate}
	\item On arrows: For any double semigroup homomorphism $\varphi: S\rightarrow S'$ between double inverse semigroups, define
	$\varphi F' : \cdig(S)\rightarrow \cdig(S')$ to be the double (inductive) functor with the following data:
		\begin{enumerate}[--]
		\item An object function defined to be $\varphi$ restricted to $E(S,\odot )\cap E(S,\circledcirc ).$
		\item A vertical arrow function defined to be $\varphi$ restricted to $E(S,\circledcirc ).$
		\item A horizontal arrow function defined to be $\varphi$ restricted to $E(S,\odot ).$
		\item A double cell function defined to be $\varphi.$ \qedhere
		\end{enumerate}
	\end{itemize} }
\end{definition}
\begin{proposition}
The functor $F':\cat{DIS} \rightarrow \cat{DIG}$ as defined above is indeed a functor.
\end{proposition}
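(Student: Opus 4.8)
The plan is to mirror the proof of the previous proposition, but now the genuine work lies in checking that each $\varphi F'$ is a bona fide double inductive functor, since the object assignment $S \mapsto \cdig(S)$ has already been shown to land in $\cat{DIG}$ by the preceding theorem. So I would organize the argument as: (1) the component functions of $\varphi F'$ have the claimed codomains; (2) $\varphi F'$ is a double functor; (3) $\varphi F'$ is inductive; and (4) $F'$ respects identities and composition of morphisms.

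First I would confirm that the four component functions are well-defined. The object, vertical-arrow, and horizontal-arrow functions are the restrictions of $\varphi$ to $E(S,\odot)\cap E(S,\circledcirc)$, to $E(S,\circledcirc)$, and to $E(S,\odot)$, respectively, so Note \ref{idemnote}---which says $\varphi$ sends $\odot$-idempotents to $\odot'$-idempotents and $\circledcirc$-idempotents to $\circledcirc'$-idempotents---guarantees that each restriction lands in the corresponding objects/arrows of $\cdig(S')$. The double-cell function is simply $\varphi$ itself and needs no check on its codomain.

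Next I would verify that $\varphi F'$ is a double functor, i.e.\ that it preserves both compositions together with all (co)domains and identities. Since the horizontal and vertical composites in $\cdig(S)$ are literally the products $\circledcirc$ and $\odot$ whenever they are defined, preservation of composition is immediate from $\varphi$ being a double semigroup homomorphism. For the (co)domains I would first observe that $\varphi$ preserves semigroup inverses in each operation separately: applying $\varphi$ to $a\circledcirc a^\circledcirc \circledcirc a = a$ and $a^\circledcirc \circledcirc a \circledcirc a^\circledcirc = a^\circledcirc$ and invoking uniqueness of the inverse forces $(a^\circledcirc)\varphi = (a\varphi)^{\circledcirc'}$, and likewise $(a^\odot)\varphi = (a\varphi)^{\odot'}$. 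Consequently $a\hdom = a\circledcirc a^\circledcirc$, $a\hcod = a^\circledcirc \circledcirc a$, and their vertical analogues are all preserved, as are the identity cells under the identification of objects with identities.

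Finally, to see that $\varphi F'$ is \emph{inductive} I would check preservation of the two orders and two meets. The order is defined by $a\leq b$ iff $a = e\circledcirc b$ for some vertical arrow $e$; applying $\varphi$ and using that it preserves products while carrying vertical arrows to vertical arrows (by the idempotent argument) yields $a\varphi \leq b\varphi$, and symmetrically for $\lesssim$. The meets $\hmeet$ and $\vmeet$ are again just the products $\circledcirc$ and $\odot$, so they are preserved outright. Functoriality of $F'$ itself is then immediate, since on double cells $\varphi F'$ is just $\varphi$ and composition of double inductive functors is computed componentwise, so $F'$ takes identity homomorphisms to identity functors and respects composition. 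The main obstacle is not difficulty but bookkeeping; the one conceptually load-bearing point is the preservation of inverses, which is exactly what links the homomorphism condition on $S$ to the groupoid (co)domain structure of $\cdig(S)$.
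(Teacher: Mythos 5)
Your proposal is correct and takes essentially the same route as the paper: well-definedness of the object and arrow functions via the idempotent-preservation Note, double functoriality from the fact that the compositions in $\cdig(S)$ are literally the semigroup products and the arrow functions are restrictions of $\varphi,$ and inductivity by rewriting the partial orders and meets as semigroup products. The only divergences are minor: you make explicit the preservation of semigroup inverses (and hence of horizontal and vertical (co)domains), which the paper leaves implicit, while you omit the explicit check that $\varphi$ preserves (co)restrictions, which the paper includes as its item (c) but which is not demanded by its definition of double inductive functor and in any case follows from order- and (co)domain-preservation together with the uniqueness clause defining restrictions.
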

\begin{proof}
The above defined object, vertical arrow and horizontal arrow functions are well-defined due to the fact that
	double semigroup homomorphisms preserve idempotents (see Note \ref{idemnote}).	
	The (double) functoriality of $F'$ follows from the fact that the vertical and horizontal arrow functions are defined as restrictions to sets of idempotents and that the double cell function is the identity. For example, if 
	\[(S,\odot,\circledcirc) \stackrel{\varphi}{\longrightarrow} (S',\odot',\circledcirc') \stackrel{\varphi'}{\longrightarrow} (S'',\odot'',\circledcirc'') \] 
	are composable double inverse semigroup homomorphisms, then the vertical arrow function of $(\varphi;\varphi')F'$ is 
	\[ (\varphi;\varphi)|_{E(S,\circledcirc)} = \varphi|_{E(S,\circledcirc)}; \varphi'|_{\varphi|_{E(S,\circledcirc)}} = \varphi|_{E(S,\circledcirc)} ;\varphi'|_{E(S,\circledcirc)\varphi} = \varphi|_{E(S,\circledcirc)} ;\varphi'|_{E(S',\circledcirc')}.\]
	It remains, then, to show that $\varphi F'$ is actually inductive (since each function of $\varphi F'$ is either $\varphi$ or a restriction of $\varphi,$
	we will write only $\varphi):$
	\begin{enumerate}[(a)]
	\item We check that $\varphi $ preserves all partial orders. If $a,b\in \cdig(S)$ with $a\leq b,$ then by definition $a = e\circ b = e \circledcirc  b$
				for some $e\in \ver( \cdig(S)) = E(S,\circledcirc ).$ Since $\varphi$ is a homomorphism, then, 
				\begin{align*}
					&\hspace{0.5pc} a = e\circledcirc  b \\
					\Longrightarrow \hspace{1pc}& a \varphi = (e\circledcirc  b)\varphi \\
					\Longrightarrow \hspace{1pc}& a\varphi = e\varphi \circledcirc ' b\varphi \\
					\Longrightarrow \hspace{1pc}& a\varphi = e\varphi \circ' b\varphi \\
				\end{align*}
			Since $\varphi$ preserves idempotents, $e\varphi$ is indeed a vertical arrow and thus $a\varphi \leq' b\varphi$ in $\cdig(S').$ 
			Similarly, if $a\lesssim b$ in $\cdig(S),$ then $a\varphi \lesssim' b\varphi$ in $\cdig(S').$
	\item Since $\varphi$ is a double inverse semigroup homomorphism, $\varphi $ preserves all meets. For example, if $e$ and $f$ are vertical arrows in $\cdig(S),$ then
	\[ (e\hmeet f)\varphi = (e\circledcirc f)\varphi = e\varphi \circledcirc' f\varphi = e\varphi \hmeet' f\varphi.\]
	\item We verify that $\varphi $ preserves all (co)restrictions. Let $a$ be a double cell and $e\leq a\hcod$ a vertical arrow in $\cdig(S).$ Then $a\varphi$ is a double cell of $\cdig(S')$ and, since $\varphi$ preserves orders, $e\varphi\leq' a\varphi\hcod.$ Also,
	\[ (e {}_* | a)\varphi = (a\circledcirc a)\varphi = e\varphi \circledcirc' a\varphi = ( e\varphi {}_*| a\varphi) \]
	with 
	\[ (e\varphi {}_*| a\varphi )\hcod = (e {}_*| a)\varphi\hcod = (e{}_*|a)\hcod \varphi = e\varphi. \]
	\end{enumerate}
	It can then be said that the functor $\varphi$ defined in the arrow function of $F'$ is indeed inductive.
\end{proof}
We may now state and prove the following theorem:
\begin{theorem}
The pair of functors 
\[
	\xymatrixcolsep{3pc}
	\xymatrix{
		\cat{DIG} \ar@<0.5pc>[r]^-{F} & \cat{DIS} \ar@<0.5pc>[l]^-{F'} }
\]
is an isomorphism of categories.
\end{theorem}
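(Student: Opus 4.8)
The statement asserts an \emph{isomorphism} of categories, so my plan is to prove that the two composites are literally identity functors rather than merely naturally isomorphic ones: starting from a double inductive groupoid $\mathcal{G}$, the round trip $\mathcal{G} \mapsto \mathcal{G}F \mapsto (\mathcal{G}F)F' = \cdig(\cdis(\mathcal{G}))$ must return $\mathcal{G}$ on the nose, and starting from a double inverse semigroup $S$, the round trip $S \mapsto SF' \mapsto (SF')F = \cdis(\cdig(S))$ must return $S$. I would check each round trip both on objects (the structures themselves) and on arrows (the functors and homomorphisms between them).

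The whole argument rests on one structural observation I would establish first: the constructions $\cdis$ and $\cdig$ are \emph{directionally decomposable}, meaning they restrict to Lawson's single constructions $\cis$ and $\cig$ in each of the horizontal and vertical directions. Concretely, the operation $\circledcirc$ of $\cdis(\mathcal{G})$ is defined using only the horizontal inductive groupoid $(\ver(\mathcal{G}),\dbl(\mathcal{G}))$, and its defining formula is exactly that of Construction \ref{igtois}; hence $(\cdis(\mathcal{G}),\circledcirc) = \cis(\ver(\mathcal{G}),\dbl(\mathcal{G}))$, and symmetrically $(\cdis(\mathcal{G}),\odot) = \cis(\hor(\mathcal{G}),\dbl(\mathcal{G}))$. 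Dually, the horizontal groupoid, the order $\leq$, the meet $\hmeet$, and the horizontal (co)restrictions of $\cdig(S)$ are precisely $\cig(S,\circledcirc)$, while the vertical data are precisely $\cig(S,\odot)$. Once this is made precise, the reconstruction in each single direction is nothing more than the (already assumed) one-dimensional Ehresmann-Schein-Nambooripad isomorphism, i.e.\ the identities $\cis(\cig(S))=S$ and $\cig(\cis(G))=G$.

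For the round trip $S \mapsto \cdis(\cdig(S))$, the underlying set is unchanged, and the two reconstructed operations are $\cis(\cig(S,\circledcirc)) = (S,\circledcirc)$ and $\cis(\cig(S,\odot)) = (S,\odot)$ by the single ESN theorem, so $\cdis(\cdig(S)) = S$ as a double inverse semigroup. For the round trip $\mathcal{G} \mapsto \cdig(\cdis(\mathcal{G}))$, the double cells are again $\dbl(\mathcal{G})$, and applying $\cig\circ\cis = \id$ in each direction recovers the horizontal and vertical groupoid structures, both compositions, the orders $\leq$ and $\lesssim$, the meets $\hmeet$ and $\vmeet$, and all the (co)restrictions. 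The object set requires one extra remark I would supply: since the idempotents of $\cis(G)$ are exactly the identity arrows (objects) of $G$, we get $E(\cdis(\mathcal{G}),\circledcirc) = \ver(\mathcal{G})$ and $E(\cdis(\mathcal{G}),\odot) = \hor(\mathcal{G})$, so $\obj(\cdig(\cdis(\mathcal{G}))) = E(\cdis(\mathcal{G}),\circledcirc)\cap E(\cdis(\mathcal{G}),\odot)$ is exactly the set of double cells that are simultaneously horizontal and vertical identities, which is $\obj(\mathcal{G})$.

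On arrows both round trips are essentially forced. Since $\varphi F'$ has double cell function $\varphi$, I get $(\varphi F')F = (\varphi F')_d = \varphi$ immediately. In the other direction $fF = f_d$, and $(f_d)F'$ is the double functor whose cell function is $f_d$ and whose object, vertical-arrow, and horizontal-arrow functions are the restrictions of $f_d$ to the respective idempotent sets; because $f$ is itself a double functor, these restrictions are precisely $f_0$, $f_v$, and $f_h$, so $(fF)F' = f$. I expect the main obstacle to be organizational rather than conceptual: verifying, carefully and on the nose, that the directional decomposition genuinely holds for \emph{every} piece of structure — especially the two partial orders and the four (co)restriction operations — and that the interchange-type axioms of a double inductive groupoid are exactly what guarantee the horizontally- and vertically-reconstructed data agree and glue back into a single double structure. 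Because an isomorphism of categories demands literal equality and not just equivalence, each identification above must be a genuine set-level equality, and the bulk of the effort will be confirming this for all components simultaneously.
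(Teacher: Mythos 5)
Your proposal is correct, and it takes a genuinely different route from the paper. Both arguments share the same skeleton---show that $FF'$ and $F'F$ are literal identity functors by checking object and arrow functions, and on arrows the two treatments are essentially identical ($\varphi F'F = \varphi$ since $\varphi F'$ has double cell function $\varphi$; $fFF' = f$ since the restrictions of $f_d$ to idempotents recover $f_0, f_v, f_h$)---but they diverge on the object-level equalities. The paper proves $\cdis(\cdig(S)) = S$ by a direct element computation: it expands the reconstructed product $[a|{}_* a\vcod \vmeet b\vdom] \bullet [a\vcod \vmeet b\vdom\, {}_*| b]$ into a string of semigroup products and collapses it to $a \odot b$ using $a \odot a^\odot \odot a = a$ and idempotency, and proves $\cdig(\cdis(\mathcal{G})) = \mathcal{G}$ by directly checking that composites coincide (using $a\vcod = b\vdom$ when $a\bullet b$ exists, so the meet and the (co)restrictions become trivial). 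You instead isolate a directional decomposition---$(\cdis(\mathcal{G}),\circledcirc) = \cis(\ver(\mathcal{G}),\dbl(\mathcal{G}))$, the horizontal data of $\cdig(S)$ being exactly $\cig(S,\circledcirc)$, and dually---and then invoke the one-dimensional Ehresmann--Schein--Nambooripad isomorphism as a black box in each direction. This is legitimate: the 1-D theorem as stated in the paper is an isomorphism of categories, so $\cis(\cig(S)) = S$ and $\cig(\cis(G)) = G$ hold on the nose, which is exactly the set-level equality your reduction needs, and your two extra remarks are the genuinely two-dimensional content correctly identified: that $\obj(\cdig(\cdis(\mathcal{G})))$, the cells that are simultaneously horizontal and vertical identities, is precisely $\obj(\mathcal{G})$ (such a cell is $1_{1_A}$ for an object $A$), and that the two directional reconstructions glue back into one double structure because vertical and horizontal arrows are identified with identity double cells---a fact the paper also leans on at the corresponding point. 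Your route buys brevity and makes explicit the ``double structure $=$ two interacting one-dimensional structures'' principle that the paper only gestures at in the notes to Construction \ref{distodig}; the paper's computation, by contrast, is self-contained and does not depend on quoting the 1-D theorem in its strict isomorphism form.
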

\begin{proof}
We check that these two functors compose to the identity functors. 

Object functions: We know by our construction that the elements of a double inverse semigroup  $S$ are exactly the double cells of $\cdig(S)$ and that the double cells of a double inductive groupoid $\mathcal{G}$ are exactly the elements of $\cdis(\mathcal{G}).$ Then it is the case that the elements of $\cdis(\cdig(S))$ are exactly the elements of $S$ and the double cells of $\cdig(\cdis(\mathcal{G}))$ are exactly the double cells of $\mathcal{G}.$

We show that the products of elements in $\cdis(\cdig(S))$ are the same as those in $S.$ If $a,b\in S,$ we consider the product $a\odot  b.$ In  $\cdis(\cdig(S)),$ this product is
\begin{align*}
	&[a|{}_* a\vcod \vmeet b\vdom ] \bullet [a\vcod \vmeet b\vdom ] \\
	&= a\odot  (a^\odot  \odot  a ) \odot  (b\odot  b^\odot  ) \odot  (a^\odot  \odot  a) \odot  (b\odot  b^\odot ) \odot  b \\
	&= a \odot  ((a^\odot  \odot  a) \odot  (a^\odot  \odot  a)) \odot  ((b\odot  b^\odot ) \odot  (b\odot  b^\odot )) \odot  b \\
	&= (a\odot  a^\odot  \odot  a) \odot  (b\odot  b^\odot  \odot  b) \\
	&= a \odot  b.
\end{align*}
Similarly, $a\circledcirc  b$ in $\cdis(\cdig(S))$ is the same as in $S.$ Since the elements and the products are the same, we can say that $\cdis(\cdig(S)) = S.$

It will finally be shown that the composites of double cells in  $\mathcal{G}FF' = \cdig(\cdis(\mathcal{G}))$ are the same as those in $\mathcal{G}.$ We will then be done since vertical and horizontal arrows can be considered as identity double cells for horizontal and vertical composition, respectively. For any double cells $a,b\in \mathcal{G},$ if the composite $a\bullet b$ exists, we know that 
\[ a \odot  b = [ a|{}_* a\vcod \vmeet b\vdom ] \bullet [ a\vcod \vmeet b\vdom {}_*| b] \]
in the double inverse semigroup $\cdis(\mathcal{G}).$ However, since the composite $a\bullet b$ exists, $a\vcod = b\vdom$ and so this product in $\cdig(\cdis(\mathcal{G})),$ then, becomes
\begin{align*}
[ a|{}_* a\vcod  ] \bullet [ b\vdom {}_*| b]  &=(  a \odot  a^\odot  \odot  a  ) \bullet ( b \odot  b^\odot  \odot   b) \\
&= a \bullet b.
\end{align*}
Similarly, $a\circ b$ in $ \cdig(\cdis(\mathcal{G}))$ is the same as in $\mathcal{G}$ and we are done. Since the double cells (and thus horizontal and vertical arrows) and the composites are the same, we can say that  $ \cdig(\cdis(\mathcal{G})) = \mathcal{G}.$ 

Arrow functions: If $f$ is any inductive functor, then $fF$ is the double cell function and thus $fFF'$ is a functor whose double cell function is indeed just the the double cell function of $f.$ The object, vertical arrow and horizontal arrow functions of $fFF'$ are also just the object, vertical arrow and horizontal functions of $f.$ For example, the vertical arrow function of $fFF'$ is the restriction of $fF$ to the idempotents of the horizontal operation in the given double inverse semigroup. However, these idempotents are exactly the vertical arrows and thus the restriction of $fF$ by the horizontal idempotents is exactly the vertical arrow function of $f.$

If $\varphi$ is any double semigroup homomorphism, then the double cell function of $\varphi F'$ is just $\varphi.$ Then $\varphi F'F = \varphi,$ since $\varphi F'F$ is defined to be the double cell function of $\varphi F'.$
\end{proof}
%
%%%%
% Showing DIS are improper
%%%%
%
%%%%%%%%%%
\section{Main Result}
\label{mainresult}
%%%%%%%%%%
%
We will now use the established  isomorphism of categories between $\cat{DIS}$ and $\cat{DIG}$ to formulate a characterisation of double inverse semigroups. 
\begin{lemma}
Let $(S,\odot, \circledcirc)$ be a double inverse semigroup. For all $a,b\in E(S,\odot) \cap E(S,\circledcirc),$ 
\[ a\odot b = a\circledcirc b.\]
\end{lemma}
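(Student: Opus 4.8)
The plan is to prove this directly from the middle-four interchange law, with no recourse to the groupoid machinery of Sections~\ref{inductivegroupoids} and \ref{doubleinductivegroupoids}. The argument is a restricted Eckmann--Hilton maneuver in which the hypothesis that $a$ and $b$ are idempotent for \emph{both} operations plays the role usually taken by shared units.

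First I would extract two idempotency facts from the interchange law. Since $a\circledcirc a = a$ and $b\circledcirc b = b$, the interchange law gives
\[ (a\odot b)\circledcirc(a\odot b) = (a\circledcirc a)\odot(b\circledcirc b) = a\odot b, \]
so $a\odot b$ is $\circledcirc$-idempotent; symmetrically, using $a\odot a = a$ and $b\odot b = b$,
\[ (a\circledcirc b)\odot(a\circledcirc b) = (a\odot a)\circledcirc(b\odot b) = a\circledcirc b, \]
so $a\circledcirc b$ is $\odot$-idempotent. I would then recall that the idempotents of an inverse semigroup commute (the characterisation theorem of Section~\ref{doublesemigroups}; alternatively Theorem~\ref{inversecommutethm} gives full commutativity), whence $a\odot b = b\odot a$ and $a\circledcirc b = b\circledcirc a$, since $a$ and $b$ lie in both idempotent sets.

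The proof is then a single chain: insert $b\odot a$ in place of one copy of $a\odot b$, apply the interchange law to transpose the $\odot$-products into $\circledcirc$-products, and collapse again using commutativity of the $\circledcirc$-idempotents:
\[ a\odot b = (a\odot b)\circledcirc(b\odot a) = (a\circledcirc b)\odot(b\circledcirc a) = (a\circledcirc b)\odot(a\circledcirc b) = a\circledcirc b. \]
There is no serious obstacle here beyond spotting the right expression to evaluate: the one subtlety is feeding $(a\odot b)\circledcirc(b\odot a)$, rather than $(a\odot b)\circledcirc(a\odot b)$, into the interchange law, so that after transposition the factor $b\circledcirc a$ appears and commutativity of the idempotents finishes the argument. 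This commutativity is precisely what substitutes for the unit hypothesis of the classical Eckmann--Hilton argument.
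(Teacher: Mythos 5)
Your proof is correct and is essentially the paper's own argument: both derive the $\odot$-idempotency of $a\circledcirc b$ (and dually the $\circledcirc$-idempotency of $a\odot b$) from the interchange law, then run the same four-step chain $a\odot b = (a\odot b)\circledcirc(b\odot a) = (a\circledcirc b)\odot(b\circledcirc a) = (a\circledcirc b)\odot(a\circledcirc b) = a\circledcirc b.$ The only (harmless) refinement is your observation that commutativity of the idempotents alone suffices, whereas the paper invokes Kock's full commutativity theorem (Theorem \ref{inversecommutethm}) for the swap steps.
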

\begin{proof}
We first note that, since $a$ and $b$ are idempotent with respect to $\odot$ and $\circledcirc,$
\[ a\odot a = a =  a\circledcirc a \]
and 
\[ b\odot b = b =  b\circledcirc b. \]
Also, 
\begin{align*}
 (a\circledcirc b) \odot (a\circledcirc b) &= (a\odot a) \circledcirc (b\odot b) \\
 &= a \circledcirc b.
\end{align*}
Using these facts and the commutativity in a double inverse semigroup,
\begin{align*}
a\odot b &= (a\odot b)\circledcirc (a\odot b) \\
&= (a\odot b) \circledcirc (b\odot a) \\
&= (a \circledcirc b) \odot (b\circledcirc a) \\
&= (a \circledcirc b) \odot (a\circledcirc b) \\
&= a\circledcirc b. \qedhere
\end{align*}
\end{proof}

Recall that our isomorphism gives rise to the following three facts:
\begin{itemize}
\item For all double cells $a$ and $b,$ $a\leq b$ ($a\lesssim b,$  respectively) if and only if there is some vertical arrow $e$ such that $a = e\circledcirc b$ (there is some horizontal arrow $e$ such that $a = e \odot b,$  respectively) 
\item For all vertical arrows $e$ and $f,$ $e\hmeet f = e\circledcirc f$ (for all horizontal arrows $e$ and $f,$ $e\vmeet f = e\odot f,$  respectively). 
\item For all double cells $a$ and vertical arrows $e,$ $(e {}_*| a) = e \circledcirc a$ (for all double cells $a$ and horizontal arrows $e,$ $[e {}_*|a] = e\odot a,$  respectively). 
\end{itemize}
That is, we can consider the partial order, meets and (co)restrictions as semigroup products. Because, on objects, these semigroup products coincide, we have the following corollary:
\begin{corollary} \label{collapse}
Let $\mathcal{G}$ be a double inductive groupoid. For all objects $a,b\in \obj(\mathcal{G}),$
\[ a\leq b \mbox{ if and only if } a\lesssim b \]
and
\[ a\hmeet b = a \vmeet b. \eqno \qed\] 
\end{corollary}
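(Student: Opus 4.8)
The plan is to translate both assertions into statements about the two semigroup operations on the double inverse semigroup $S = \cdis(\mathcal{G})$ and then invoke the preceding Lemma. Under the established isomorphism, the objects $\obj(\mathcal{G})$ are exactly the shared idempotents $E(S,\odot) \cap E(S,\circledcirc)$; in particular, each such object is simultaneously a vertical arrow and a horizontal arrow of $\mathcal{G}$, so all four expressions $a \leq b$, $a \lesssim b$, $a \hmeet b$ and $a \vmeet b$ are defined for $a,b \in \obj(\mathcal{G})$. This is the observation that makes the comparison meaningful in the first place.

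For the meet equality I would argue directly. Since $a$ and $b$ are vertical arrows, the second of the three facts recalled above gives $a \hmeet b = a \circledcirc b$; since they are also horizontal arrows, the same fact gives $a \vmeet b = a \odot b$. But $a$ and $b$ are shared idempotents, so the preceding Lemma yields $a \circledcirc b = a \odot b$, whence $a \hmeet b = a \vmeet b$. This is the heart of the corollary and is essentially immediate once the translation is in place.

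For the order equivalence I would exploit that the objects of an inductive groupoid form a meet-semilattice and that the restriction of the groupoid order to objects is precisely the induced semilattice order. Thus, for $a,b \in \obj(\mathcal{G})$, we have $a \leq b$ if and only if $a \hmeet b = a$, and $a \lesssim b$ if and only if $a \vmeet b = a$. Combining these with the meet equality established above, all four conditions collapse: $a \leq b \Leftrightarrow a \hmeet b = a \Leftrightarrow a \vmeet b = a \Leftrightarrow a \lesssim b$.

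The one point requiring care — and the step I would flag as the main obstacle — is justifying the two order characterisations, i.e.\ that $\leq$ and $\lesssim$ really do restrict to the respective semilattice orders on objects, so that $a \leq b \Leftrightarrow a \hmeet b = a$. This is part of the inductive-groupoid structure supplied by Construction \ref{distodig} and Lawson's theory, but it is worth confirming that the meet of two objects is again an object: this holds because $a \circledcirc b = a \odot b$ is a meet of two $\circledcirc$-idempotents (hence a $\circledcirc$-idempotent) and simultaneously a meet of two $\odot$-idempotents (hence a $\odot$-idempotent), placing it back in $E(S,\odot) \cap E(S,\circledcirc) = \obj(\mathcal{G})$.
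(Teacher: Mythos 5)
Your proposal is correct and follows essentially the same route as the paper: both translate the orders and meets on objects into the semigroup products $\circledcirc$ and $\odot$ via the established dictionary and then invoke the preceding Lemma that these products coincide on shared idempotents. Your explicit semilattice characterisation $a\leq b \Leftrightarrow a\hmeet b = a$ (and the closure check that $a\hmeet b$ is again an object) merely spells out a step the paper leaves implicit in its one-line justification before the corollary.
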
 
\begin{note} \emph{
In any double inductive groupoid $\mathcal{G},$ recall that the sets $\ver(\mathcal{G})$ and $\hor(\mathcal{G})$ are posets with partial orders $\lesssim$ and $\leq,$ respectively. Because objects in $\obj(\mathcal{G})$ can be identified with either vertical or horizontal arrows, $\obj(\mathcal{G})$ is also a poset with respect to both $\lesssim$ and $\leq.$ By Corollary \ref{collapse}, however, these partial orders collapse on $\obj(\mathcal{G})$ so that it is unambiguous to talk about $\obj(\mathcal{G})$ as a partially ordered set.}
\end{note}

Having established that the two order structures of a double inductive groupoid coincide on its objects, we seek to further study and describe the relationship between the order on the objects and the orders on both the horizontal and vertical arrows. 

Let $\mathcal{G}$ be a double inductive groupoid and let $\cdis(\mathcal{G})$ be its corresponding double inverse semigroup. Given a double cell (element of $\cdis(\mathcal{G}))$ $a,$ it will have the following form:
\[
\xymatrixcolsep{3pc}
\xymatrix{
A \ar[r]^{a \odot a^\odot} \ar[d]_{a \circledcirc a^\circledcirc} \ar@{}[rd]|{a} &B \ar[d]^{a^\circledcirc \circledcirc a} \\
C \ar[r]_{a^\odot \odot a} & D }
\]
For convenience, let $a_h = a \odot a^\odot, a_v=a \circledcirc a^\circledcirc.$ By Theorem \ref{inversecommutethm}, both inverse semigroup products are commutative and $a,$ then, can be written as
\[
\xymatrixcolsep{3pc}
\xymatrix{
A \ar[r]^{a_h} \ar[d]_{a_v}|{\bullet} \ar@{}[rd]|{a} &B \ar[d]^{a_v}|{\bullet} \\
C \ar[r]_{a_h} & D }
\]
Since we can also write the domains and codomains of vertical and horizontal arrows as semigroup properties, commutativity then implies that
\[A = a_h\hdom = a_h\hcod = B = a_v \vdom = a_v\vcod = D = a_h \vcod = C. \]
We conclude that every double cell $a\in \dbl (\mathcal{G}),$ $a$ has the following form:
\[
\xymatrixcolsep{3pc}
\xymatrix{
A \ar[r]^{a_h} \ar[d]_{a_v}|{\bullet} \ar@{}[rd]|{a} &A \ar[d]^{a_v}|{\bullet} \\
A \ar[r]_{a_h} & A }
\]

Let $\mathcal{G}$ be a double inductive groupoid and let $A$ be an object of $\mathcal{G}.$ Then there is a natural collection of double cells
\[ (A)\mathcal{S}_\mathcal{G} = \left\{ a\in \dbl(\mathcal{G}) \Bigg| \vcenter{\hbox{\xymatrix{ A \ar@{}[rd]|a \ar[r]^{a_h} \ar[d]_{a_v}|\bullet & A\ar[d]^{a_v}|\bullet \\ A \ar[r]_{a_h} & A }}}  \right\}\]

Though definitely a double groupoid, it is not immediately obvious that $(A)\mathcal{S}_\mathcal{G}$ is a double \emph{inductive} groupoid. That is, it could be possible that meets or (co)restrictions may not be well defined on $(A)\mathcal{S}_\mathcal{G}$ (the meet of two arrows in $(A)\mathcal{S}_\mathcal{G}$ may not be in $(A)\mathcal{S}_\mathcal{G},$ for example). The following proposition, however, allows us to properly call them one-object double inductive groupoids:

\begin{proposition}
For each object $A\in \obj(\mathcal{G}),$ the above-defined collection $(A)\mathcal{S}_\mathcal{G}$ is a one-object double inductive groupoid.
\end{proposition}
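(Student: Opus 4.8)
The plan is to equip $(A)\mathcal{S}_\mathcal{G}$ with explicit data and then reduce the whole verification to two closure statements. As data I take the single object $A$; as vertical arrows the vertical loops at $A$, namely those $e\in\ver(\mathcal{G})$ with $e\vdom=e\vcod=A$; as horizontal arrows the horizontal loops at $A$; and as double cells $(A)\mathcal{S}_\mathcal{G}$ itself, with all compositions, inverses, orders, meets and (co)restrictions inherited from $\mathcal{G}$. We are already told $(A)\mathcal{S}_\mathcal{G}$ is a double groupoid. The interchange conditions (conditions (iii)--(ix) of the definition) are universally quantified identities holding throughout $\mathcal{G}$, so they persist on the sub-collection as soon as the operations involved are known to stay inside it; likewise the order axioms and the uniqueness of (co)restrictions are inherited. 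What is genuinely at stake is that the two pieces $(\ver,\dbl)$ and $(\hor,\dbl)$ of the candidate are inductive groupoids, and this comes down to two facts: (a) the vertical (resp. horizontal) loops at $A$ are closed under $\hmeet$ (resp. $\vmeet$), so that they form sub-meet-semilattices; and (b) a horizontal (co)restriction of a cell of $(A)\mathcal{S}_\mathcal{G}$ by a vertical loop at $A$, and the vertical dual, again lies in $(A)\mathcal{S}_\mathcal{G}$.

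Claim (a) is immediate. Writing meets and inverses as semigroup operations (Construction \ref{distodig}) and using that $A$ is a shared idempotent and that, by the interchange law, the $\odot$-inverse distributes over $\circledcirc$, for vertical loops $e,f$ at $A$ one computes
\[
(e\hmeet f)\vdom=(e\circledcirc f)\odot(e\circledcirc f)^\odot=(e\odot e^\odot)\circledcirc(f\odot f^\odot)=A\circledcirc A=A,
\]
and similarly $(e\hmeet f)\vcod=A$; the horizontal case is dual. Hence $e\hmeet f$ is again a loop at $A$, and the single object $A$ of course forms a (trivial) meet-semilattice, so both component inductive groupoids are legitimate.

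The crux is claim (b). I would first recall the normal form noted just before the statement: by commutativity (Theorem \ref{inversecommutethm}) every $a\in(A)\mathcal{S}_\mathcal{G}$ has $a\hdom=a\hcod=a_v$ and $a\vdom=a\vcod=a_h$, with $a_h,a_v$ loops at $A$. Fixing a vertical loop $e$ at $A$, the relevant (co)restriction is, by Construction \ref{distodig}, the $\circledcirc$-product $e\circledcirc a$ (the other cases are symmetric). Its left edge is $(e\circledcirc a)\hdom=e$, a loop at $A$, so the two left corners are $A$. The point is that its right edge $(e\circledcirc a)\hcod=a^\circledcirc\circledcirc e\circledcirc a$ is also a loop at $A$: distributing the $\odot$-domain over this triple $\circledcirc$-product and using $e\odot e^\odot=A$ together with $(a^\circledcirc)\vdom=(a\vdom)^\circledcirc=(a_h)^\circledcirc$ gives
\[
(a^\circledcirc\circledcirc e\circledcirc a)\vdom=(a_h)^\circledcirc\circledcirc A\circledcirc a_h=(a_h)^\circledcirc\circledcirc a_h=A,
\]
the last equality because $a_h$ is a loop at $A$, so $(a_h)^\circledcirc\circledcirc a_h=(a_h)\hcod=A$; the bottom corner is identical. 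Thus all four corners of $e\circledcirc a$ are $A$ and it lies in $(A)\mathcal{S}_\mathcal{G}$. I expect this conjugation step to be the main obstacle: it is precisely where one must exploit that the edge $a_h$ is a \emph{loop} at $A$ -- equivalently, that all corners of $a$ are $A$ -- rather than merely an edge below $A$, and where Corollary \ref{collapse} and the commutativity of Theorem \ref{inversecommutethm} are used to pin the intermediate meets and inverses down to the object $A$. With (a) and (b) established, closure under $\circledcirc$ and $\odot$ follows automatically, since each product is assembled from meets, (co)restrictions and composites, and the proposition follows.
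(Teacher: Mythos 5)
Your proposal is correct, and its skeleton matches the paper's: take the double groupoid structure as given, note that the interchange and order axioms are universally quantified identities and hence inherited by any sub-collection closed under the operations, and reduce the whole proposition to closure of $(A)\mathcal{S}_\mathcal{G}$ under meets and under (co)restrictions --- exactly the four items (i)--(iv) the paper verifies. Where you genuinely diverge is in \emph{how} closure is checked. The paper stays entirely inside the double inductive groupoid: for meets it invokes the axiom that (co)domains are functorial with respect to meets, giving $(f\vmeet g)\hdom = f\hdom \vmeet g\hdom = A\vmeet A = A$ in one line; and for (co)restrictions it observes that $(e\,{}_*|a)$ has domain $e$, whose corners are $A$, and then applies the normal form established just before the proposition --- \emph{every} double cell has all four corners equal --- to the restricted cell itself, so all of its corners are $A$ with no computation at all. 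You instead pass through the isomorphism into semigroup language (Construction \ref{distodig}) and compute with $\odot$, $\circledcirc$, inverses and interchange, in particular the conjugation calculation $(e\circledcirc a)\hcod = a^\circledcirc \circledcirc e \circledcirc a$ and $(a^\circledcirc \circledcirc e \circledcirc a)\vdom = A$. This is valid: the distribution facts you use, $(x\circledcirc y)^\odot = x^\odot \circledcirc y^\odot$ and $(x^\circledcirc)^\odot = (x^\odot)^\circledcirc$, do follow from interchange plus uniqueness of inverses, though you assert them rather than prove them, and dropping the middle $A$ in $(a_h)^\circledcirc \circledcirc A \circledcirc a_h$ tacitly uses commutativity (Theorem \ref{inversecommutethm}) plus idempotence of $A$. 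But the step you flag as the ``main obstacle'' is heavier than necessary: you quote the four-corners normal form yet apply it only to $a$; applied to the cell $(e\,{}_*|a)$ directly, the conjugation evaporates, which is precisely the paper's shortcut. In exchange, your route makes the semigroup mechanics fully explicit and shows concretely where commutativity and the interchange law enter, whereas the paper's argument is shorter but leans on the prior normal-form observation and the groupoid axioms as black boxes.
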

\begin{proof}
These objects are subobjects of double groupoids and are thus double groupoids themselves. We now check the following properties:
\begin{enumerate}[(i)]

\item Vertical meets of horizontal arrows in $(A)\mathcal{S}_\mathcal{G}$ are again horizontal arrows in $(A)\mathcal{S}_\mathcal{G}.$
\item Horizontal meets of vertical arrows in $(A)\mathcal{S}_\mathcal{G}$ are again vertical arrows in $(A)\mathcal{S}_\mathcal{G}.$
\item Horizontal (co)restrictions of double cells in $(A)\mathcal{S}_\mathcal{G}$ by vertical arrows in $(A)\mathcal{S}_\mathcal{G}$ are again double cells of $(A)\mathcal{S}_\mathcal{G}.$
\item Vertical (co)restrictions of double cells in $(A)\mathcal{S}_\mathcal{G}$ by horizontal arrows in $(A)\mathcal{S}_\mathcal{G}$ are again double cells of $(A)\mathcal{S}_\mathcal{G}.$
\end{enumerate}

To prove (i), let $f$ and $g$ be horizontal arrows in $(A)\mathcal{S}_\mathcal{G}.$ We must show that the horizontal domain and codomain of $f\vmeet g$ are both $A.$ This is true since, in $\mathcal{G},$ we have the property that meets preserve domains and codomains. That is,
\[ (f\vmeet g)\hdom = f\hdom \vmeet g\hdom = A \vmeet A = A. \]
Similarly, $(f\vmeet g)\hcod = A.$

The proof of (ii) is similar to that of (i).

To prove (iii), we note that if we have a cell $a\in (A)\mathcal{S}_\mathcal{G}$ and a vertical arrow $e\leq a\hdom$ in $(A)\mathcal{S}_\mathcal{G},$ then the domain of the restriction $(e{}_*|a)$ is $e.$ Since $e\in (A)\mathcal{S}_\mathcal{G},$ the domain and codomain of $e$ are both $A.$ Because all four corners of a double cell are equal, all four corners of $(e{}_*|a)$ are $A$ and thus, this is an element of $(A)\mathcal{S}_\mathcal{G}.$

The proof of (iv) is similar to that of (iii).

These one-object double groupoids, then, are closed under meets and (co)restrictions. Because they are subobjects of a double inductive groupoid, they must satisfy all the axioms of a double inductive groupoid. Therefore, these one-object double groupoids are indeed one-object double inductive groupoids.
\end{proof}

By our isomorphism of categories, we can consider these one-object double inductive groupoids as a special class of double inverse semigroups whose operations share only one idempotent. Recall that an Abelian group $(A,+)$ can be considered as an improper double inverse semigroup $(A,+,+).$ Since groups have only one idempotent, the double inductive groupoid corresponding to $A$ will have only one object, one vertical arrow and one horizontal arrow. This motivates the proof of the following Proposition, which shows that we can conversely think of one-object double inductive groupoids as Abelian groups.

\begin{proposition}
If $A$ is a one-object double inductive groupoid, then $A$ is an Abelian group.
\end{proposition}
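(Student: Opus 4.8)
The plan is to pass through the isomorphism $\cat{DIG}\cong\cat{DIS}$ and argue entirely inside the associated double inverse semigroup $S=\cdis(A)$, which by construction has $A$ as its \emph{unique} shared idempotent, $E(S,\odot)\cap E(S,\circledcirc)=\{A\}$. Since an inverse semigroup possessing a single idempotent is a group (that idempotent is a two-sided identity, and $a\odot a^\odot=a^\odot\odot a=A$ exhibits two-sided inverses), the whole statement reduces to showing $E(S,\odot)=E(S,\circledcirc)=\{A\}$: once both operations are group operations sharing the identity $A$, the Eckmann--Hilton argument forces them to coincide and to be commutative, so $S$ is an Abelian group. By the evident $\odot\leftrightarrow\circledcirc$ symmetry of the hypothesis it suffices to prove $E(S,\odot)=\{A\}$.

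First I would record two routine facts, each proved by uniqueness of inverses together with the interchange law: the distribution identities $(x\odot y)^\circledcirc=x^\circledcirc\odot y^\circledcirc$ and $(x\circledcirc y)^\odot=x^\odot\circledcirc y^\odot$. From the second it follows that the $\circledcirc$-inverse of a $\odot$-idempotent is again $\odot$-idempotent. Next, given any $\odot$-idempotent $e$, the Note following Construction \ref{distodig} shows that $(e\odot e^\odot)\circledcirc(e\odot e^\odot)^\circledcirc=e\circledcirc e^\circledcirc$ is a shared idempotent, hence equals $A$; the codomain expression $e^\circledcirc\circledcirc e$ is $\circledcirc$-idempotent and, using that $e$ and $e^\circledcirc$ are $\odot$-idempotent, also $\odot$-idempotent, so it too equals $A$. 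Thus every $\odot$-idempotent $e$ is a $\circledcirc$-loop at $A$, i.e. lies in the vertex group $G_A=\{g:g\circledcirc g^\circledcirc=g^\circledcirc\circledcirc g=A\}$ of the inductive groupoid of $(S,\circledcirc)$. In particular $A$ is a two-sided $\circledcirc$-identity for $e$, and $G_A$ is a group under $\circledcirc$ with identity $A$.

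The crux is then a short two-step use of interchange. Set $v=e\odot A=A\odot e$, the two being equal because idempotents commute in any inverse semigroup. Applying interchange to $(e\circledcirc A)\odot(A\circledcirc e)$ collapses the left-hand side to $e\odot e=e$ and the right-hand side to $v\circledcirc v$, so $e=v\circledcirc v$. Applying interchange a second time to $(v\circledcirc A)\odot(A\circledcirc v)$ gives $v\odot v=v$ on the left and $v\circledcirc v$ on the right, hence $v=v\circledcirc v$; since $v\in G_A$ and $G_A$ is a group under $\circledcirc$, cancellation forces $v=A$, whence $e=A\circledcirc A=A$. This establishes $E(S,\odot)=\{A\}$, and symmetrically $E(S,\circledcirc)=\{A\}$, after which the Eckmann--Hilton step completes the argument.

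I expect the main obstacle to be locating this precise pair of interchange substitutions. The naive attempts to show directly that $A$ is a $\odot$-unit on $G_A$ run in circles, because interchange only relates a $\odot$-product of two $\circledcirc$-products, and feeding it $A$'s symmetrically merely reproduces the expression one started with. The decisive idea is to first pin down that \emph{all} $\odot$-idempotents sit in the single vertex group $G_A$ -- so that $A$ acts as the $\circledcirc$-unit and $\circledcirc$-cancellation becomes available -- and only then to exploit that the auxiliary idempotent $v=e\odot A$ is simultaneously $\odot$-idempotent and, via interchange, $\circledcirc$-idempotent, which in a group can happen only at the identity. After that the concluding Eckmann--Hilton computation is entirely standard.
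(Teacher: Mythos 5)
Your proof is correct, and it takes a genuinely different route from the paper's. The paper never leaves the double inductive groupoid: it uses the order structure, observing that horizontal composition preserves the vertical order $\lesssim$ (via the meet--composition interchange axiom), and then squeezes $(f\vmeet g)^{-1}\circ f$ between $\id$ on both sides to conclude that any two horizontal (resp.\ vertical) arrows coincide; with a single arrow in each direction all (co)restrictions trivialize, the two compositions become everywhere-defined group operations on the double cells, and Eckmann--Hilton finishes. You instead transport the problem across the isomorphism $\cat{DIG}\cong\cat{DIS}$ and argue purely equationally in $S=\cdis(A)$: your target $E(S,\odot)=E(S,\circledcirc)=\{A\}$ is exactly equivalent to the paper's ``only one horizontal/vertical arrow,'' since in Construction \ref{distodig} the $\odot$- and $\circledcirc$-idempotents are precisely the horizontal and vertical arrows. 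Your steps all check out: the distribution identity $(x\odot y)^\circledcirc=x^\circledcirc\odot y^\circledcirc$ follows from interchange plus uniqueness of inverses; it yields that $e\circledcirc e^\circledcirc$ and $e^\circledcirc\circledcirc e$ are shared idempotents for any $\odot$-idempotent $e$, hence both equal $A$, so $A$ is a two-sided $\circledcirc$-unit for $e$; then with $v=e\odot A$, interchange on $(e\odot A)\circledcirc(A\odot e)=(e\circledcirc A)\odot(A\circledcirc e)$ gives $v\circledcirc v=e$, and the second substitution gives $v\circledcirc v=v\odot v=v$, so $v$ is simultaneously $\odot$- and $\circledcirc$-idempotent, forcing $v=A$ and $e=v\circledcirc v=A$ (your cancellation in the vertex group $G_A$ is an equivalent finish, though noting $v$ is a shared idempotent is even quicker). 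What each approach buys: the paper's argument is intrinsic and makes visible which inductive axioms (meets together with order-preservation of composition and inverses) drive the collapse, while yours avoids the order structure entirely in favour of semigroup identities, at the cost of invoking the categorical isomorphism up front to pin down the unique shared idempotent. In fact your computation localizes: applied at $A_e=e\circledcirc e^\circledcirc$ in an arbitrary double inverse semigroup, it shows every $\odot$-idempotent is already a shared idempotent, a stronger fact consistent with (and nearly re-proving a piece of) Theorem \ref{thm:main}.
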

\begin{proof}
We first recall that in any double inductive groupoid, horizontal composition and vertical meets of horizontal arrows satisfy the middle-four interchange law. That is,
\[ (f\circ g) \vmeet (f' \circ g') = (f \vmeet f') \circ (g\vmeet g').\]
We note that  $a \vmeet b = a$ implies that $a\lesssim b$ and thus preservation of meets in this way implies the following law about preserving the vertical partial order:
\[ f \lesssim f', g\lesssim g' \mbox{ implies } f\circ g \lesssim f'\circ g' \]
Of course, in a single-object double inductive groupoid, all horizontal arrows have the same domain and codomain and are thus guaranteed to be  composable.

We show now that there is only one horizontal arrow, namely the identity $\id.$ Suppose that $f$ and $g$ are two horizontal arrows. Since the horizontal arrows form a meet-semilattice with respect to the vertical order, the meet of $f$ and $g$ exist and $f\vmeet g \lesssim f, g.$ Thus by preservation of the vertical partial order and inverses by horizontal composition,
\[ (f\vmeet g)^{-1} \circ f \lesssim f^{-1} \circ f = \id\]
and \[\id = (f\vmeet g)^{-1} \circ (f\vmeet g) \lesssim (f\vmeet g)^{-1} \circ f. \]
And thus $ f = f\vmeet g.$ Similarly $g = f\vmeet g$ and therefore $f = g (= \id).$ That is, the only horizontal arrow is $\id.$ Similarly, there is only the vertical identity arrow.

Having only one vertical and horizontal arrow, every pair of double cells in $A$ is composable in either direction. More specifically, each of the horizontal/vertical restrictions are trivial and thus the inverse semigroup operations reduce to the compositions in the double inductive groupoid associated with $A.$ The compositions, however, are group operations and $A$ is therefore an Abelian group by Eckmann-Hilton.
\end{proof}
If $A\in \obj(\mathcal{G})$ and $a\in (A)\mathcal{S}_\mathcal{G}$ and $e\leq A$ is an object (i.e., is both a horizontal and vertical arrow), then we know that the unique restriction of $a$ to $e$ is in $(e)\mathcal{S}_\mathcal{G}$ and has the form
\[
\xymatrixcolsep{5pc}
\xymatrix{
e  \ar[r]^{(e {}_*| a_h)} \ar[d]_{(e {}_*| a_v)} \ar@{}[rd]|{(e {}_*| a)} &e \ar[d]^{(e {}_*| a_v)} \\
e \ar[r]_{(e {}_*| a_h)} &e }
\]
We now consider the following map between $(A)\mathcal{S}_\mathcal{G}$ and $(e)\mathcal{S}_\mathcal{G}:$
\begin{align*}
\varphi_{e\leq A}: (A)\mathcal{S}_\mathcal{G} &\rightarrow (e)\mathcal{S}_\mathcal{G} \\
 a& \mapsto {(e {}_*| a)}
\end{align*}
If $a,b\in (A)\mathcal{S}_\mathcal{G},$ then
\begin{align*}
(a\bullet b)\varphi_{e\leq A} &= (e {}_*| a\bullet b) \\
&= {(e {}_*| a)} \bullet {(e {}_*| b)} \\
&= (a)\varphi_{e\leq A} \bullet (b)\varphi_{e\leq A}.
\end{align*}
That is, $\varphi_{e\leq A}: (A)\mathcal{S}_\mathcal{G} \rightarrow (e)\mathcal{S}_\mathcal{G}$ is an Abelian group homomorphism.
The discussion above shows that associated to any double inductive groupoid $\mathcal{G},$ we have a presheaf of Abelian groups, denoted
\[ \mathcal{S}_\mathcal{G} : \obj(\mathcal{G})^\mathrm{op} \rightarrow \cat{Ab}. \]

On objects we can send  $A$ to $(A)\mathcal{S}_\mathcal{G}$ and send an arrow $A\leq B$ to the Abelian group homomorphism $\varphi_{A\leq B}: (B)\mathcal{S}_\mathcal{G} \rightarrow (A)\mathcal{S}_\mathcal{G}$ (as described above) between $(B)\mathcal{S}_\mathcal{G}$ and $(A)\mathcal{S}_\mathcal{G}.$ 

We will establish an isomorphism of categories between the category of double inductive groupoids and presheaves of Abelian groups over meet-semilattices. We will show that the above construction induces a functor. We first, however, introduce the notion of a morphism between two presheaves of Abelian groups on meet-semilattices:

\begin{definition} \emph{
A morphism of presheaves of Abelian groups on meet-semilattices $P:L^\mathrm{op} \rightarrow \cat{Ab}$ and $P':({L'})^\mathrm{op} \rightarrow \cat{Ab}$ is an ordered pair
\[ (f,\{\psi_A\}_{A\in L} ): P \rightarrow P' \]
consisting of an order and meet preserving function $f:L\rightarrow L'$ and a family of group homomorphisms $\{\psi_A : AP\rightarrow (Af)P'\}$ indexed by the objects of $A$ such that, for any objects $A\leq B$ in $L,$ the following diagram commutes:
\[
\xymatrixcolsep{4pc}
\xymatrix{
AP \ar[d]_{\psi_A}  &BP \ar[l]_{\varphi_{A\leq B}}\ar[d]^{\psi_B} \\
(Af)P' &(Bf)P' \ar[l]^{\varphi'_{Af \leq' Bf}} }
\]}
\end{definition}
\begin{notation} \emph{
We denote the category of presheaves of Abelian groups on meet-semilattices with presheaf morphisms by $\cat{AbMeetSLatt}.$}
\end{notation}
We will now define the functors of which the isomorphism comprises.
\begin{definition}\emph{ We define a functor $F: \cat{DIG} \rightarrow \cat{AbMeetSLatt}$ with the following data:
	\begin{itemize}
	\item On objects: If $\mathcal{G}$ is a double inductive groupoid, define $\mathcal{G}F$ to be the presheaf $\mathcal{S}_\mathcal{G}$ of Abelian groups on the meet-semilattice $\obj(\mathcal{G})$ as detailed above (i.e., given by the restrictions).
	\item On arrows: Given a double inductive functor $f:\mathcal{G}\rightarrow \mathcal{G'},$
	define a morphism of presheaves $fF = (f_0,\{f_d|_{(A)\mathcal{S}_\mathcal{G}}\}_{A\in \obj(\mathcal{G})}),$ where
	\begin{itemize}
	\item $ f_0$ is the object function of $f.$
	\item $f_d|_{(A)\mathcal{S}_\mathcal{G}}$ is the double cell function of $f$ restricted to those cells who have all corners $A.$ 
	\end{itemize}
	\end{itemize}}
\end{definition}
\begin{note}	 \emph{
Given a double inductive functor $f:\mathcal{G}\rightarrow \mathcal{G'},$ $fF$ is indeed a morphism in $\cat{AbMeetSLatt}.$ By definition, $f_0$ is a morphism of meet-semilattices. If $A \leq B$ in $\obj(\mathcal{G}),$ we check that the following diagram commutes:
\[
\xymatrixcolsep{5pc}
\xymatrixrowsep{2pc}
\xymatrix{
A\mathcal{S}_\mathcal{G} \ar[d]_{f_d|_{A\mathcal{S}_\mathcal{G}}} &  B\mathcal{S}_\mathcal{G} \ar[d]^{f_d|_{B\mathcal{S}_\mathcal{G}}} \ar[l]_{\varphi_{A \leq B}} \\
(Af_0)\mathcal{S}_{\mathcal{G}'} & (Bf_0)\mathcal{S}_{\mathcal{G}'} \ar[l]^{\varphi'_{Af_0 \leq' Bf_0}}
}
\]
Since any object in a double category can be identified with an identity double cell and $f$ is a double functor, for any $b\in B\mathcal{S}_\mathcal{G},$
\begin{align*}
b (\varphi_{A\leq B} ; f_d|_{A\mathcal{S}_\mathcal{G}}) &= (A{}_*| b) f_d|_{A\mathcal{S}_\mathcal{G}} \\
&= (A f_d|_{B\mathcal{S}_\mathcal{G}} \, {}_*| \, b f_d|_{B\mathcal{S}_\mathcal{G}} )' \\
&= (A f_0 \, {}_*| \, b f_d|_{B\mathcal{S}_\mathcal{G}} )' \\
&=  b (f_d|_{B\mathcal{S}_\mathcal{G}} ; \varphi'_{Af_0 \leq' Bf_0})
\end{align*}
and thus the required diagram commutes. }
\end{note}
Being restrictions of double cell functions of double functors, each restriction of $f_d$ in the above definition preserves composition and identities. The following Proposition, then, follows.
\begin{proposition}
The functor $F: \cat{DIG} \rightarrow \cat{AbMeetSLatt}$ as defined above is indeed a functor.
\end{proposition}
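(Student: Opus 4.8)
The plan is to observe that most of the work has already been done: the discussion preceding the definition shows that $\mathcal{G}F = \mathcal{S}_\mathcal{G}$ is a genuine presheaf of Abelian groups on the meet-semilattice $\obj(\mathcal{G})$, and the Note immediately above shows that for any double inductive functor $f$ the pair $fF = (f_0, \{f_d|_{(A)\mathcal{S}_\mathcal{G}}\})$ is a genuine morphism in $\cat{AbMeetSLatt}$. Thus all that remains is to verify the two functoriality axioms: that $F$ preserves identities and composition. Throughout I would use that composition of presheaf morphisms $(f,\{\psi_A\});(g,\{\chi_B\})$ is given componentwise by $(f;g, \{\psi_A ; \chi_{Af}\})$ and that the identity morphism of a presheaf $P$ is $(\id, \{\id_{AP}\})$.

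First I would treat identities. The identity double inductive functor $\id_\mathcal{G}$ has object function $\id_{\obj(\mathcal{G})}$ and double cell function $\id_{\dbl(\mathcal{G})}$; restricting the latter to each $(A)\mathcal{S}_\mathcal{G}$ yields the identity homomorphism on $(A)\mathcal{S}_\mathcal{G}$. Hence $(\id_\mathcal{G})F = (\id_{\obj(\mathcal{G})}, \{\id_{(A)\mathcal{S}_\mathcal{G}}\})$, which is precisely the identity morphism of $\mathcal{S}_\mathcal{G}$ in $\cat{AbMeetSLatt}$.

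Next I would treat composition. Given composable double inductive functors $f:\mathcal{G}\to\mathcal{G}'$ and $g:\mathcal{G}'\to\mathcal{G}''$, double functor composition is componentwise, so $(f;g)_0 = f_0 ; g_0$ and $(f;g)_d = f_d ; g_d$. Comparing the two presheaf morphisms in question, I would reduce the claim to the single identity
\[ (f_d ; g_d)|_{(A)\mathcal{S}_\mathcal{G}} = f_d|_{(A)\mathcal{S}_\mathcal{G}} ; g_d|_{(Af_0)\mathcal{S}_{\mathcal{G}'}} \]
for each object $A$. The key point making the right-hand side even typecheck is that $f$, being a double functor, preserves the four corners of a cell: if $a\in(A)\mathcal{S}_\mathcal{G}$ has all corners $A$, then $af_d$ has all corners $Af_0$, so $af_d\in(Af_0)\mathcal{S}_{\mathcal{G}'}$ and $g_d|_{(Af_0)\mathcal{S}_{\mathcal{G}'}}$ may legitimately be applied to it. Evaluating both sides on such an $a$ gives $af_d g_d$, establishing the identity, and hence $(f;g)F = fF ; gF$.

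The verification contains no real obstacle; it is pure bookkeeping. The only subtlety worth flagging is the index-matching in the second coordinate: the family comprising $gF$ is indexed by the objects of $\mathcal{G}'$, so in forming the composite one must feed $g_d$ its restriction to cells with all corners $Af_0$ rather than $A$, and it is precisely corner-preservation by the double functor $f$ that guarantees the images land where these restrictions are defined. With that matched up, preservation of identities and composition are both immediate.
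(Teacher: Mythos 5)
Your proposal is correct and follows essentially the same route as the paper, which likewise delegates well-definedness of $fF$ to the preceding Note and then asserts functoriality as immediate because the restrictions of the double cell functions preserve composition and identities. The only difference is that you spell out the bookkeeping the paper leaves implicit --- in particular the index-matching identity $(f_d;g_d)|_{(A)\mathcal{S}_\mathcal{G}} = f_d|_{(A)\mathcal{S}_\mathcal{G}};g_d|_{(Af_0)\mathcal{S}_{\mathcal{G}'}}$, justified by corner-preservation --- which is a faithful expansion of the paper's one-line argument rather than a different proof.
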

\begin{definition} \emph{We define a functor $F':\cat{AbMeetSLatt} \rightarrow \cat{DIG}$ with the following data:
	\begin{itemize}
	\item On objects: If $P: L^\mathrm{op} \rightarrow \cat{Ab}$ is a presheaf of Abelian groups on a meet-semilattice, define a double inductive groupoid $\mathcal{G} = PF'$ with the following data:
	\begin{itemize}
	\item Objects: $\obj(\mathcal{G}) = L$
	\item Vertical/horizontal arrows: $\ver(\mathcal{G}) = \hor(\mathcal{G}) = \{e_{A} : A\in L\},$ where $e_A$ is the group unit of the Abelian group $AP$ for each $A$ in $L.$
	\item Double cells: $\dbl(\mathcal{G}) = \coprod_{A\in L} AP,$ the disjoint union of all Abelian groups $AP$ for $A$ in $L.$
	\item Since the vertical and horizontal arrows of $\mathcal{G}$ are the same, we will unambiguously not differentiate between directions in the following definitions of (co)domains, composition, meets and (co)restrictions. For each object $A\in L,$ there is exactly one horizontal/vertical arrow with (co)domain $A$ (the unit $e_A$ of the Abelian group $AP).$ There is therefore a one-to-one correspondence $A \leftrightarrow e_A : A\rightarrow A$ between the objects and vertical/horizontal arrows of $\mathcal{G}.$
	\item A double cell $a$ is contained in an Abelian group $AP$ for some $A\in L.$ We define $a \hdom = a\hcod = a\vdom = \vcod = e_A.$ The composite of two double cells, then, is defined when they are inside the same group and is defined to be the group product. Given two vertical/horizontal arrows $e_A$ and $e_B,$ we define their meet $e_A \wedge e_B = A\wedge B$ to be that from $L.$ If $a$ is a double cell and $e_u \leq e_A = a \hdom,$ then define the restriction of $a$ to $e_u$ to be $(e_u {}_*| a) = e_u *_u (a)\varphi_{u\leq A} = (a)\varphi_{u\leq A} .$ Corestrictions are similarly defined.
	\end{itemize}
	\item On arrows: If $(f,\{\psi_A\}_{A\in L}): P\rightarrow P'$ is a morphism of presheaves, define a double inductive functor \[g= (f,\{\psi_A\}_{A\in L})F': PF' \rightarrow P'F'\] with the following data:
		\begin{enumerate}[--]
		\item An object function: $g_0 = f.$
		\item A vertical/horizontal arrow function: For all $A\in L,$ 
		\[(e_A)g_v = (e_A)g_h = e'_{Af},\] the unit element of the group $(Af)P'.$
		\item A double cell function  $g_d$ defined by, for each double cell $a,$  \[(a)g_d = (a)\psi_{a\hdom\vdom}\]
		(evaluate each $a$ using the group homomorphism whose index is the object in the four corners of $a).$ \qedhere
		\end{enumerate}
	\end{itemize}}
\end{definition}
\begin{proposition}
The functor $F':\cat{AbMeetSLatt} \rightarrow \cat{DIG}$ as defined above is indeed a functor.
\end{proposition}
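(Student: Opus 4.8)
The plan is to check the three conditions that make $F'$ a functor: that it is well defined on objects (each $PF'$ is genuinely a double inductive groupoid), that it is well defined on arrows (each $(f,\{\psi_A\})F'$ is genuinely a double inductive functor), and that it respects identities and composition. The decisive simplification throughout is that $PF'$ has exactly one vertical and one horizontal arrow over each object $A$, namely the unit $e_A$ of $AP$; consequently every double cell is a loop in both directions, two cells compose in either direction precisely when they lie in the same group $AP$, and the entire ordered structure is carried by the restriction maps $\varphi_{u\leq A}$ of the presheaf.

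First I would verify the object assignment. Fixing $P\colon L^{\mathrm{op}}\to \cat{Ab}$ and writing $\mathcal{G}=PF'$, both compositions on cells of $\mathcal{G}$ are the Abelian group product of $AP$, so associativity, the unit laws and inverses are inherited from the groups, and the middle-four interchange $(a\circ b)\bullet(c\circ d)=(a\bullet c)\circ(b\bullet d)$ reduces to the identity $(ab)(cd)=(ac)(bd)$, valid since $AP$ is Abelian; thus $\mathcal{G}$ is a double groupoid. For the two inductive structures I would note that $\obj(\mathcal{G})=L$ is a meet-semilattice and that the order given by $b\leq a$ (equivalently $b\lesssim a$) iff $b=(a)\varphi_{u\leq A}$ for some $u\leq A$, with $a\in AP$, is a partial order: reflexivity, transitivity and antisymmetry follow from $\varphi_{A\leq A}=\id$, the presheaf law $\varphi_{u\leq A}\,;\,\varphi_{w\leq u}=\varphi_{w\leq A}$, and antisymmetry in $L$. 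The ordered-groupoid axioms then reduce to the fact that each $\varphi_{u\leq A}$ is a group homomorphism, and uniqueness of (co)restrictions is immediate because each object supports a single arrow. The remaining middle-four interchange conditions of the definition collapse, via the same three ingredients---presheaf functoriality, the meet-semilattice laws on $L$, and commutativity of each $AP$---to routine identities; this axiom-chase is the bulk of the object verification.

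Next I would check that $g=(f,\{\psi_A\})F'$ is a double inductive functor. Its object function is $f$, its arrow functions send $e_A$ to $e'_{Af}$, and its cell function sends $a\in AP$ to $(a)\psi_A$. It is a double functor because $\psi_A$ is a group homomorphism, so $(ab)\psi_A=(a)\psi_A\,(b)\psi_A$ gives preservation of both compositions, while preservation of (co)domains is forced by $e_A\mapsto e'_{Af}$ together with $g_0=f$. For inductiveness, $f$ preserves meets on $L$ by hypothesis, and the one substantive point is preservation of (co)restrictions: for $a\in AP$ and $u\leq A$ we must show $\bigl((e_u{}_*|a)\bigr)g_d=(e'_{uf}{}_*|(a)g_d)$, which unwinds to $\bigl((a)\varphi_{u\leq A}\bigr)\psi_u=\bigl((a)\psi_A\bigr)\varphi'_{uf\leq' Af}$. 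This is exactly the commuting square defining a morphism of presheaves, read at the pair $u\leq A$. I expect this to be the main obstacle---not because it is deep, but because it is the single step where the naturality of the presheaf morphism is genuinely used, and care is needed to match the indices and the postfix composition correctly.

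Finally I would confirm functoriality. For identities, $(\id_P)F'$ has object function $\id_L$ and cell function $a\mapsto (a)\id_{AP}=a$, that is, the identity double inductive functor on $PF'$. For composition, the composite of presheaf morphisms $(f,\{\psi_A\})$ and $(f',\{\psi'_{A'}\})$ is $(f;f',\{\psi_A\,;\,\psi'_{Af}\})$; applying $F'$ yields object function $f;f'$, arrow function $e_A\mapsto e''_{Aff'}$, and cell function $a\mapsto (a)\psi_A\,\psi'_{Af}$, which agrees term by term with the composite of the two images under $F'$. Hence $F'$ preserves composition and is a functor. In summary, the real content sits in two places: the (largely routine) verification that $PF'$ satisfies every double inductive groupoid axiom, made easy by the degeneracy of its arrow structure, and the single naturality step ensuring that $g$ preserves restrictions.
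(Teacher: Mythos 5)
Your proposal is correct and takes essentially the same approach as the paper: the decisive step in both is preservation of (co)restrictions, where $(e_u\,{}_*|\,a)g_d = (a)(\varphi_{u\leq A};\psi_u) = (a)(\psi_A;\varphi'_{uf\leq' Af}) = (e_u g_d\,{}_*|\,a g_d)$ follows from the commuting square defining a presheaf morphism, with order and meet preservation coming from the hypotheses on $f$ and functoriality dispatched by composing group homomorphisms. Your additional verification that $PF'$ is itself a double inductive groupoid (group laws, interchange via commutativity of $AP$, uniqueness of restrictions from the degenerate arrow structure) is sound, but the paper folds that into the construction itself rather than this proposition's proof, so it is extra thoroughness rather than a different route.
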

\begin{proof}
This is functorial, since it is composed of group homomorphisms. That is, composition (the group products) and identities are preserved. Also,
	\begin{enumerate}[(a)]
	\item This preserves all partial orders, since $f$ is an order preserving map.
	\item $F'$ preserves all meets. Recall that we have identified objects $A$ with the group unit $e_A$ in the group $AP.$ Since $f$ is meet-preserving, then,
	\begin{align*}
	(e_A \wedge e_B)g_v &= e'_{(e_A \wedge e_B)f} \\
	&= e'_{(A \wedge B)f} \\
	&= e'_{Af \wedge' Bf} \\
	&= Af \wedge' Bf \\
	&=  e_{Af} \wedge' e_{Bf} \\
	&=  (e_A )g_v \wedge' (e_B )g_v .
	\end{align*}
	\item $F'$ preserves (co)restrictions. Let $a\in AP$ be a double cell and let $e_u \leq e_A = a\hdom$ be a group identity. Note, then, that the restriction $(e_u {}_*| a)$ of $a$ to $e_u$ lives inside of $uP.$ From the definition of presheaf morphism, the $\psi$ maps commute with the $\varphi$ maps and thus
	\begin{align*}
	(e_u {}_*| a) g_d &= (e_u {}_*| a) \psi_u \\
	&= (a)(\varphi_{u\leq A}; \psi_u) \\
	&= (a)(\psi_A; \varphi_{uf \leq Af}) \\
	&= (a\psi_A)\varphi_{uf\leq Af} \\
	&= (e_{uf} {}_*| a\psi_A) \\
	&= (e_u g_d {}_*| ag_d)
	\end{align*}
	and $F'$ preserves restrictions. Similarly, $F'$ preserves corestrictions.
	\end{enumerate}
	It can then be said that the functor $g$ defined in the arrow function of $F'$ is indeed inductive.
\end{proof}
Finally, we may prove the following:
\begin{theorem} The pair of functors
\[
	\xymatrixcolsep{3pc}
	\xymatrix{
		\cat{DIG} \ar@<0.5pc>[r]^-{F} & \cat{AbMeetSLatt} \ar@<0.5pc>[l]^-{F'} }
\]
is an isomorphism of categories.
\end{theorem}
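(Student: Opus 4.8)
The plan is to show, exactly as in the proof of the preceding isomorphism between $\cat{DIG}$ and $\cat{DIS}$, that the two composite functors $FF'$ and $F'F$ are the respective identity functors, checking agreement separately on objects (where the work lies) and on arrows (where it is essentially formal). The single structural fact that drives everything is the one established just before the definition of $(A)\mathcal{S}_{\mathcal{G}}$: using commutativity (Theorem \ref{inversecommutethm}), every double cell of a double inductive groupoid has all four corners equal to one object, and consequently---by the Proposition identifying one-object double inductive groupoids with Abelian groups---each object carries exactly one vertical and one horizontal arrow, namely its identity. Thus a double inductive groupoid is nothing more than its meet-semilattice of objects $\obj(\mathcal{G})$ together with the disjoint union $\coprod_A (A)\mathcal{S}_{\mathcal{G}}$ of the Abelian groups sitting over each object and the restriction homomorphisms between them; this is precisely the data of the presheaf $\mathcal{S}_{\mathcal{G}}$, so I expect the reconstruction to be literal equality under the standing identification of objects with identity arrows.

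For $F'F = \mathrm{id}_{\cat{AbMeetSLatt}}$ I would start from a presheaf $P : L^{\mathrm{op}} \to \cat{Ab}$. By construction $PF'$ has object semilattice $L$, a single arrow per object, and double cells over $A$ equal to $AP$ with group product as composition and with restriction given by the structure maps $\varphi$ of $P$; applying $F$ then returns, over each object $A$, the group $(A)\mathcal{S}_{PF'}$ of cells with all corners $A$, which is $AP$ with the same product, and returns as its presheaf restrictions exactly these $\varphi$. Hence $PF'F = P$. On arrows, a presheaf morphism $(f,\{\psi_A\})$ is sent by $F'$ to the double inductive functor $g$ with $g_0 = f$ and $g_d$ evaluating $\psi$ at the corner object, and $F$ then extracts $(g_0, \{g_d|_{(A)\mathcal{S}}\}) = (f, \{\psi_A\})$, so $F'F$ is the identity on arrows as well.

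For $FF' = \mathrm{id}_{\cat{DIG}}$ I would take a double inductive groupoid $\mathcal{G}$, form the presheaf $\mathcal{S}_{\mathcal{G}}$ on $\obj(\mathcal{G})$, and reconstruct $\mathcal{S}_{\mathcal{G}}F'$. Its object semilattice is $\obj(\mathcal{G})$ (with the unambiguous meet of Corollary \ref{collapse}), its double cells are $\coprod_A (A)\mathcal{S}_{\mathcal{G}} = \dbl(\mathcal{G})$, and it has one vertical/horizontal arrow per object---matching $\mathcal{G}$ by the uniqueness of identity arrows noted above. It then remains to check that the four operations agree: horizontal and vertical composition both reduce on each $(A)\mathcal{S}_{\mathcal{G}}$ to its (Eckmann-Hilton) group product, which is the composition inherited from $\mathcal{G}$; meets of objects are the meets of $\obj(\mathcal{G})$; and the restriction $(e_u {}_*| a) = (a)\varphi_{u \leq A}$ in $\mathcal{S}_{\mathcal{G}}F'$ is, by the very definition of $\mathcal{S}_{\mathcal{G}}$, the restriction $(e_u {}_*| a)$ of $\mathcal{G}$. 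On arrows, a double inductive functor $f$ gives $fF = (f_0, \{f_d|_{(A)\mathcal{S}}\})$, and applying $F'$ rebuilds the functor whose object part is $f_0$ and whose cell part is $f_d$ (reassembled from its restrictions to each fibre), i.e.\ $fFF' = f$.

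The main obstacle is entirely contained in the object part of $FF' = \mathrm{id}$: one must be sure that the passage to $\mathcal{S}_{\mathcal{G}}$ loses no information, i.e.\ that $\mathcal{G}$ really does collapse into a disjoint family of one-object pieces indexed by a meet-semilattice. This is where the coincidence of corners and the uniqueness of the identity arrows are indispensable; once that collapse is in hand, the agreement of composition, meets, and (co)restrictions is routine verification rather than genuine difficulty.
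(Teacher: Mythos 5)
Your proposal is correct and takes essentially the same route as the paper: both directions are verified by checking $PF'F = P$ and $\mathcal{G}FF' = \mathcal{G}$ separately on objects and on arrows, with the object-level reconstruction resting on exactly the collapse result the paper invokes (all four corners of a double cell coincide, and by the one-object Proposition the only vertical and horizontal arrows are identities, so $\mathcal{G}$ is precisely $\obj(\mathcal{G})$ together with the groups $(A)\mathcal{S}_{\mathcal{G}}$ and the restriction homomorphisms). Your explicit check that composition, meets, and (co)restrictions agree under $FF'$ is if anything slightly more detailed than the paper's, but it is the same argument.
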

\begin{proof}
We check that these two functors compose to the identity functors. 
\begin{enumerate}[(1)]
\item Object functions: Given a presheaf $P:L^\mathrm{op}\rightarrow \cat{Ab},$ we check that $PF'F = P.$ First, $PF'$ is the double inductive groupoid with objects $L,$ vertical/horizontal arrows the identity elements and double cells the disjoint union of the Abelian groups $AP$ for $A\in L.$ When we send this double inductive groupoid into presheaves, we have a presheaf $P':L^\mathrm{op}\rightarrow \cat{Ab}$ whose object function sends elements of $L$ to its corresponding one-object double inductive groupoid in $PF'$ and sends arrows to group homomorphisms defined by the restriction in $PF'.$ We verify that these two presheaves agree on both their object and arrow functions:
\begin{enumerate}[(i)]
\item On objects: Given $B\in L,$
\[
BP' = \left\{ \vcenter{\hbox{\xymatrix{B \ar[r] \ar[d] \ar@{}[rd]|b & B \ar[d] \\ B \ar[r] &B}}} \bigg| b\in \dbl(G) = \coprod_{A\in L} AP \right\} = BP,
\]
since every double cell of this form must be in the group $BP.$
\item On arrows: Suppose that $B \leq C$ in $L.$ We then have a map $\varphi'_{B\leq C}:CP'\rightarrow BP'= CP \rightarrow BP$ given by the restriction in $PF',$ which is defined by evaluation of the homomorphism $\varphi_{B\leq C}:CP \rightarrow BP$ from $P.$  Then, for all $c\in CP,$
\[ (c)\varphi'_{B\leq C} = (B{}_*|c) = (c)\varphi_{B\leq C}.\]
\end{enumerate}

Conversely, suppose that we are given a double inductive groupoid $\mathcal{G}.$ We check that $\mathcal{G}FF' = \mathcal{G}.$ We know that $\mathcal{G}F$ is a presheaf $P: \obj(\mathcal{G})^\mathrm{op} \rightarrow \cat{Ab}.$ Sending this presheaf into double inductive groupoids, then, gives us a double inductive groupoid with objects $\obj(\mathcal{G}),$ vertical/horizontal arrows all identities and double cell the disjoint union of the groups in the image of $P.$ It was shown before, however, that a double inductive groupoid consists solely of cells that lie inside of these groups and have only identities for vertical and horizontal arrows. It is clear, then, that this is the same double inductive groupoid, or that $\mathcal{G}FF' = \mathcal{G}.$

\item Arrow Functions: Given a morphism $(f, \{\psi_A: AP \rightarrow (Af)P'\}: P \rightarrow P'$ of presheaves,  its image under $F'$ in $\cat{DIG}$ is the double functor whose object function is $f$ and whose double cell function is comprised of the $\psi_A$ maps. The image of this double functor under $F,$ then, is just the presheaf morphism consisting of $f:L\rightarrow L'$ and, for each $A\in L,$ the double cell function restricted to $A,$ which is equal to $\psi_A,$ and this composite is the identity.

Conversely, suppose that $f: \mathcal{G}\rightarrow \mathcal{G'}$ is a double inductive functor. Then $fF$ is the morphism of presheaves
\[(f_0, \{f_d|_{A\mathcal{S}_\mathcal{G}}: A\mathcal{S}_\mathcal{G} \rightarrow (Af_0)\mathcal{S}_\mathcal{G}\}):\mathcal{G}F \rightarrow \mathcal{G'}F. \]
The image of this morphism under $F',$ then will be the double inductive functor whose
\begin{enumerate}[--]
\item Object function is $f_0.$
\item Vertical and horizontal arrow functions map identities to the identities under $f_0.$ Since a double inductive groupoid contains only identities as vertical and horizontal arrows, these are the vertical and horizontal arrow functions from $f.$
\item Double cell function is defined by the double cell function of $f$ restricted to $A\mathcal{S}_\mathcal{G}$ for each object $A.$ Since every double cell in a $\mathcal{G}$ is contained in some $A\mathcal{S}_\mathcal{G},$ these double cell functions also coincide.
\end{enumerate}
That is, these double inductive functors are equal and $f = fFF'.$
\end{enumerate}

Having defined two functors whose composition is the identity functor in either way, we have completed the proof.
\end{proof}

We have shown that double inverse semigroups are exactly presheaves of Abelian groups on meet-semilattices. In particular, we have seen that double inverse semigroups consist of a collection of single object double groupoids (indexed by its idempotents) in which both the vertical and horizontal composition coincide (i.e., groups). Add to this Kock's result that double inverse semigroups commute and we have the following result:
\begin{theorem} \label{thm:main}
Double inverse semigroups are commutative and improper. That is to say that $(S,\odot, \circledcirc)$ is a double inverse semigroup if and only if both $\odot$ and $\circledcirc$ are commutative inverse semigroup operations with $\circledcirc = \odot.$ \qed
\end{theorem}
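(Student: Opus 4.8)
The plan is to assemble Theorem \ref{thm:main} from the machinery already developed, rather than to argue directly from the semigroup axioms. The forward direction of the ``if and only if'' is nothing more than a rephrasing of the target claim, so the mathematical content lives in showing that any double inverse semigroup $(S,\odot,\circledcirc)$ is commutative and improper; the converse (that a commutative inverse semigroup with $\circledcirc=\odot$ genuinely forms a double inverse semigroup) is the easy direction, handled by invoking the first example, where any commutative semigroup $(S,\odot)$ yields the improper double semigroup $(S,\odot,\odot)$, and observing that both coordinates are then inverse semigroup operations by hypothesis.

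For the substantive direction, commutativity is immediate: it is exactly Kock's Theorem \ref{inversecommutethm}, which I would simply cite. The real work is properness, and here I would route everything through the isomorphism of categories established just above, $\cat{DIG}\simeq\cat{DIS}$ and $\cat{DIG}\simeq\cat{AbMeetSLatt}$. First I would transport $S$ across the first isomorphism to its double inductive groupoid $\cdig(S)$. The chain of structural results then does the decomposition: every double cell of $\cdig(S)$ has all four corners equal to a single object $A$, so $\cdig(S)$ is partitioned (indexed by the meet-semilattice of shared idempotents) into the one-object double inductive groupoids $(A)\mathcal{S}_{\cdig(S)}$, each of which is an Abelian group by the Proposition proved above. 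Because the group operation of $(A)\mathcal{S}_{\cdig(S)}$ arises, via Eckmann--Hilton, from the coincidence of horizontal and vertical composition on that single-object groupoid, the two semigroup operations agree on each fibre.

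The key step I expect to require care is promoting these fibrewise coincidences to a global identity $\odot=\circledcirc$ on all of $S$. The point is that, under Construction \ref{digtodis}, the products $a\odot b$ and $a\circledcirc b$ are defined in terms of (co)restrictions and composites, and the crucial input is the earlier Corollary \ref{collapse} together with the Lemma showing $a\odot b=a\circledcirc b$ for all $a,b\in E(S,\odot)\cap E(S,\circledcirc)$. I would argue that once the horizontal and vertical meets, orders, and (co)restrictions all coincide on the shared idempotents, the two formulas defining $\circledcirc$ and $\odot$ in Construction \ref{digtodis} become literally the same expression for every pair of cells, since each arises from restricting along the common meet and composing along the common groupoid structure. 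Thus $\odot=\circledcirc$ on $S$, establishing properness fails, i.e.\ $S$ is improper.

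The main obstacle is bookkeeping rather than conceptual: I must make sure the identification of horizontal with vertical data (via Corollary \ref{collapse} and the collapse of orders on $\obj(\mathcal{G})$) is invoked at precisely the places where Construction \ref{digtodis} uses a choice of direction, so that the two product formulas are seen to agree term by term. Because the preceding isomorphism theorems already guarantee that $\cdis(\cdig(S))=S$ with its original operations, it suffices to check the equality of operations inside $\cdis(\cdig(S))$, where every ingredient has been shown to be direction-independent. Once that is in hand, combining commutativity from Theorem \ref{inversecommutethm} with properness-failure yields the stated characterisation, and the theorem follows.
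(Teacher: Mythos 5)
Your proposal is correct and follows essentially the same route as the paper: the paper likewise obtains Theorem \ref{thm:main} by combining Kock's commutativity result (Theorem \ref{inversecommutethm}) with the isomorphisms $\cat{DIS}\cong\cat{DIG}\cong\cat{AbMeetSLatt}$, under which every element lies in a one-object double inductive groupoid that is an Abelian group via Eckmann--Hilton, so both products reduce to restriction along the common meet followed by the single group multiplication. If anything, you are more explicit than the paper about the globalization step (checking, via Corollary \ref{collapse} and the direction-independence of meets and (co)restrictions in the collapsed structure, that the two formulas of Construction \ref{digtodis} agree term by term), which the paper leaves implicit in its presheaf characterisation.
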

\begin{note}
A \emph{Clifford semigroup} is a completely regular inverse semigroup; that is, each element is in some subgroup of the semigroup. An inverse semigroup $S$ is a Clifford semigroup if it satisfies $xx^{-1} = x^{-1}x$ for all $x\in S$ \cite{carvalho03}. Theorem \ref{thm:main} says, then, that double inverse semigroups are commutative Clifford semigroups.
\end{note}
\begin{appendices}
  \renewcommand\thetable{\thesection\arabic{table}}
  \renewcommand\thefigure{\thesection\arabic{figure}}
  \section{Full Definition of Double Inductive Groupoids}
   \label{appendixa}
   \begin{definition} \emph{
A \emph{double inductive groupoid}
\[ \mathcal{G} = (\obj(\mathcal{G}),\ver(\mathcal{G}),\hor(\mathcal{G}),\dbl(\mathcal{G}), \leq, \lesssim)\]
 is a double groupoid (i.e., a double category in which every vertical and horizontal arrow is an isomorphism and each double cell is an isomorphism with respect to both the horizontal and vertical composition) such that
\begin{enumerate}[(i)]
\item $(\ver(\mathcal{G}),\dbl(\mathcal{G}))$ is an inductive groupoid.
% Notation for horizontal groupoid
\begin{itemize}
\item  We denote the composition in this inductive groupoid -- the horizontal composition from $\dbl(\mathcal{G})$ -- with $\circ.$  We denote the partial order on this groupoid as $\leq.$ If $e$ and $f$ are horizontal identity cells (vertical arrows), we denote their meet as $e\hmeet f.$ For a cell $\alpha\in \dbl(\mathcal{G})$ and a vertical arrow $e \in \ver(\mathcal{G})$ such that $e\leq \alpha \hdom,$  we denote the horizontal restriction of $\alpha$ by $e$ by $(e {}_*|\alpha). $ Similarly, if $e$ is a vertical arrow such that $e \leq \alpha \hcod,$ we denote the horizontal corestriction of $\alpha$ by $e$ by $(\alpha |_* e).$
\end{itemize}
\item $(\hor(\mathcal{G}),\dbl(\mathcal{G}))$ is an inductive groupoid.
% Notation for vertical groupoid
\begin{itemize}
\item We denote the composition in this inductive groupoid -- the vertical composition from $\dbl(\mathcal{G})$ -- with $\bullet.$ We denote the partial order on this groupoid as $\lesssim.$  If $e$ and $f$ are vertical identity cells (horizontal arrows), we denote their meet as $e\vmeet f.$ For a cell $\alpha\in \dbl(\mathcal{G})$ and a horizontal arrow $e \in \hor(\mathcal{G})$ such that $e\lesssim \alpha \vdom,$  we denote the horizontal restriction of $\alpha$ by $e$ by $[e{}_*| \alpha]. $ Similarly, if $e$ is a horizontal arrow such that $e \lesssim \alpha \vcod,$ we denote the horizontal corestriction of $\alpha$ by $e$ by $[\alpha |_* e].$
\end{itemize}
\item When defined, vertical  (horizontal, respectively) composition and horizontal (vertical, respectively) (co)restriction must satisfy the middle-four interchange. Explicitly, 
\begin{enumerate}
\item $\big( a\bullet b |_* f\bullet g \big) = \big( a |_* f\big) \bullet \big(  b |_* g \big).$
\item $\big[ a\circ b |_* f\circ g \big] = \big[ a |_* f\big] \circ \big[  b |_* g \big].$
\item $\big(f\bullet g {}_*|   a\bullet b  \big) = \big( f  {}_*| a \big) \bullet \big(  g {}_*|  b\big).$ 
\item $\big[f\circ g  {}_*|  a\circ  b  \big] = \big[f   {}_*|  a \big] \circ \big[g   {}_*|  b  \big].$ 
\end{enumerate}
For example, for (a) to be well defined, we need that $a$ and $b$ are double cells and $f$ and $g$ are vertical arrows with $a\bullet b$ defined, $f\bullet g$ defined, $f \leq a\hcod$ and $g\leq b\hcod.$ The rule $\big( a\bullet  b |_* f\bullet g \big) = \big( a |_* f\big) \bullet \big(  b |_* g \big),$ visually:
\[
\xymatrixcolsep{7pc}
\xymatrixrowsep{3pc}
\vcenter{\hbox{\xymatrix{
{} \ar[r] \ar[d]|\bullet \ar@{}[rd]|{\big( a |_* f\big)} &{} \ar[d]^f|\bullet \\
{} \ar[r] \ar[d]|\bullet \ar@{}[rd]|{\big(  b |_* g \big)} &{} \ar[d]^g|\bullet \\
{} \ar[r] &{} 
} } } = 
\xymatrixcolsep{7pc}
\xymatrixrowsep{6pc}
\vcenter{\hbox{\xymatrix{
{} \ar[r] \ar[d]|\bullet \ar@{}[rd]|{\big( a\bullet b |_* f\bullet g \big)} &{} \ar[d]^{f\bullet g}|\bullet  \\
{} \ar[r] &{} 
} } } 
\]
\item When defined, vertical  (horizontal, respectively) composition and horizontal (vertical, respectively) meet must satisfy the middle-four interchange. 
Explicitly,
\begin{enumerate}[(a)]
\item $(e\vmeet f) \circ (g\vmeet h) = (e \circ g) \vmeet (f \circ h).$
\item $(e\hmeet f) \bullet (g\hmeet h) = (e \bullet g) \hmeet (f \bullet h).$
\end{enumerate}
For example, for rule (a) to be well defined, we need that $e,$ $f,$ $g$ and $h$ are horizontal arrows with $e\circ g$ and $f\circ h$ defined. The rule $(e\hmeet f) \bullet (g\hmeet h) = (e \bullet g) \hmeet (f \bullet h),$ visually:
\[
\xymatrixcolsep{3pc}
\xymatrixrowsep{3pc}
\vcenter{\hbox{\xymatrix{
{} \ar[d]_e|\bullet & {} \ar[d]^f|\bullet \\
{} \ar[d]_g|\bullet \ar@{}[r]|\hmeet &{} \ar[d]^h|\bullet \\
{} &{} }}} =
\vcenter{\hbox{\xymatrix{
{} \ar[d]^{e\hmeet f}|\bullet \\
{} \ar[d]^{g\hmeet h}|\bullet \\
{} }}}
\]
\item  When defined, vertical  (horizontal) meet and horizontal (vertical) (co)restriction must satisfy the middle-four interchange. 
Explicitly,
\begin{enumerate}[(a)]
\item $(e|_* f) \vmeet (g|_* h) = (e\vmeet g |_*  f\vmeet h).$
\item $[e|_* f] \hmeet [g|_* h] = [ e\hmeet g |_*  f\hmeet h].$
\item $(e {}_*| f) \vmeet (g {}_*| h) = (e\vmeet g  {}_*|  f\vmeet h).$
\item $[e {}_*| f] \hmeet [g {}_*| h] = [ e\hmeet g  {}_*|  f\hmeet h].$
\end{enumerate}
For example, for rule (b) to be well defined, we need that $e$ and $g$ are vertical arrows and $f$ and $h$ are objects with $f\lesssim e\vcod$ and $h\lesssim g\vcod.$ The rule $(e|_* f) \vmeet (g|_* h) = (e\vmeet g |_*  f\vmeet h),$ visually:
\[
\xymatrixcolsep{2.5pc}
\xymatrixrowsep{3pc}
\vcenter{\hbox{\xymatrix{
{} \ar[rr]^-{(e|{}_* f)} &{} \ar@{}[d]|\vmeet &{f}\\
{} \ar[rr]_-{(g|{}_* h)} &{} &{h}
}}} = 
\xymatrixcolsep{5pc}
\vcenter{\hbox{\xymatrix{
{} \ar[r]_-{(e\vmeet g |_*  f\vmeet h)} &{f\vmeet h} 
}}}
\]
\item  When defined, vertical (co)restriction and horizontal (co)restriction must satisfy the middle-four interchange. Explicitly,
\begin{enumerate}[(a)]
\item $ ([a|_*  f] |_*  [g|_* x] ) = [ (a|_* g) |_*  (f|_* x) ].$
\item $ [(a|_* g) |_*  (f|_* x) ] = ( [a|_* f] |_*  [g|_* x] ) .$
\item $ ([x {}_*|  g]  {}_*|  [f {}_*| a] ) = [ (x {}_*| f)  {}_*|  (g {}_*| a) ].$
\item $ [(x {}_*| f)  {}_*|  (g {}_*| a) ] = ( [x {}_*| g] | [f {}_*| a] ) .$
\end{enumerate}
The rule $ ([a|_*  f] |_*  [g|_* x] ) = [ (a|_* g) |_*  (f|_* x)],$ visually:
\[
\xymatrix{
{} \ar[rr] \ar[dd] & {} &{} \ar[dd] &{} \ar[dd]^g \\
{} &{a} &{} \ar@{}[r]|\geq &{} \\
{} \ar[rr] &{}  &{} &{g \vcod } \\
{} \ar[rr]_f &{}\ar@{}[u]|-*[@]{\lesssim} &{f\hcod}  &{x}\rlap{$=f\hcod \wedge g\vcod$} \ar@{}[l]|(.2)*[@]{\lesssim} \ar@{}[u]|-*[@]{\leq}  }
\]
\item  When defined, vertical meet and horizontal meet must satisfy the middle-four interchange law:
\[(e \hmeet f) \vmeet (g\hmeet h) = (e \vmeet g) \hmeet (f\vmeet h).\] 
\item  When defined, vertical (horizontal) (co)domain must be functorial with respect to the horizontal (vertical) meet. Explicitly,
\begin{enumerate}[(a)]
\item $(e \hmeet f) \vdom = e\vdom \hmeet f \vdom.$
\item $(e \hmeet f) \vcod = e\vcod \hmeet f\vcod.$
\item $(e\vmeet f) \hdom = e\hdom \vmeet f\hdom.$
\item $(e \vmeet f) \hcod = e\hcod \vmeet f\hcod.$
\end{enumerate}
The rule $(e \hmeet f) \vdom = e\vdom \hmeet f \vdom,$ visually:
\[
\xymatrix{
A \ar[d]_e & B \ar[d]^f & A\hmeet B \ar[d]^{e\hmeet f}\\
{} &{} &{}}
\]
\item When defined, vertical (horizontal) (co)domain must be functorial with respect to the horizontal (vertical) (co)restriction. Explicitly,
\begin{enumerate}[(a)]
\item $(a|_* e)\vdom = (a\vdom|_* e\vdom).$
\item $(a|_* e) \vcod = (a\vcod |_*  e\vcod).$
\item $(e {}_*| a) \vdom = (e\vdom  {}_*|  a\vdom).$
\item $(e {}_*| a) \vcod = (e\vcod  {}_*|  a \vcod).$
\item $[a|_* e]\hdom = [a\hdom |_* e\hdom].$
\item $[a|_* e] \hcod = [a\hcod |_*  e\hcod].$
\item $[e {}_*| a] \hdom = [e\vdom  {}_*|  a\hdom].$
\item $[e {}_*| a] \hcod = [e\hcod  {}_*|  a \hcod].$
\end{enumerate}
The rules $(a|_* e)\vdom = (a\vdom|_* e\vdom)$ and $(a|_* e)\hdom = (a\hdom|_* e\hdom),$ visually:
\[
\xymatrixcolsep{8pc}
\xymatrixrowsep{6pc}
\xymatrix{
{} \ar[d] \ar@{}[rd]|{(a|_* e)}\ar[r]^{(a\vdom|_* e\vdom)} &{} \ar[d]^e \\
{} \ar[r]_{(a\hdom|_* e\hdom)} &{} } \qedhere
\]
\end{enumerate} }
\end{definition}

  \section{Proof of the Interchange Law in $\cdis(\mathcal{G})$}
   \label{appendixb}
Given a double inductive groupoid $\mathcal{G},$ refer to Section \ref{doubleinductivegroupoids} for the construction of the double inverse semigroup $\cdis(\mathcal{G})$ based on $\mathcal{G}.$ We give here a full proof that the horizontal and vertical operations of $\cdis(\mathcal{G})$ satisfy middle-four interchange law:
\begin{proposition} \label{m4dig}
For all $a,b,c,d\in \cdis(\mathcal{G}),$ $(a\circledcirc b)\odot (c\circledcirc d) = (a\odot c)\circledcirc (c\odot d).$
\end{proposition}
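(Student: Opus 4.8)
The plan is to expand both sides of the interchange law into genuine composites of double cells in $\mathcal{G}$ and to show these composites coincide by repeated appeal to the compatibility axioms of a double inductive groupoid. Picture the four cells arranged in a $2\times 2$ grid, with $a,b$ on top and $c,d$ on the bottom: the product $(a\circledcirc b)\odot(c\circledcirc d)$ assembles the grid by first forming the two horizontal composites (rows) and then composing them vertically, while $(a\odot c)\circledcirc(b\odot d)$ first forms the two vertical composites (columns) and then composes them horizontally. In an ordinary double category these two assemblies agree by the interchange law for composition; the genuine complication here is that each $\circledcirc$ and each $\odot$ carries (co)restrictions indexed by meets, so the real task is to show that the (co)restrictions accumulated along the two routes land all four cells on a common central refinement.

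First I would write out the left-hand side explicitly. Setting $p = a\hcod\hmeet b\hdom$ and $q = c\hcod\hmeet d\hdom$, the two rows are $(a|_* p)\circ(p {}_*| b)$ and $(c|_* q)\circ(q {}_*| d)$; applying $\odot$ then (co)restricts each row by the vertical meet $r = (a\circledcirc b)\vcod\vmeet(c\circledcirc d)\vdom$ and composes them with $\bullet$. Using the functoriality of vertical (co)domain with respect to horizontal composition and with respect to horizontal (co)restriction and meet, I would rewrite $r$ in terms of the vertical (co)domains of the individual cells $a,b,c,d$. The right-hand side I would expand symmetrically, with $r' = (a\odot c)\hcod\hmeet(b\odot d)\hdom$ expressed through the horizontal (co)domains of the four cells. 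The aim of this bookkeeping is to reduce both expressions to a single four-fold composite in which each of $a,b,c,d$ appears (co)restricted to the one common corner object sitting at the centre of the grid.

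The core of the argument is then the interchange between the two kinds of composition and the two kinds of (co)restriction. Having decomposed $r$ as a horizontal composite $f\circ g$ of vertical arrows, I would invoke the composition--(co)restriction interchange axiom to pull the outer vertical (co)restriction on the left-hand side through the horizontal composition, turning the (co)restriction of a row into the horizontal composite of the (co)restrictions of the two cells in that row, and dually for the columns on the right-hand side. The resulting nested (co)restrictions I would collapse using the restriction--restriction interchange axiom together with the meet--meet and meet--(co)domain axioms. Once both sides are written as a horizontal-then-vertical (equivalently vertical-then-horizontal) composite of four individually (co)restricted cells, the underlying double-category interchange law for $\circ$ and $\bullet$ reorders the assembly and matches the two expressions.

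The main obstacle I anticipate is purely combinatorial: verifying that the central index object is the same whether it is reached by first forming horizontal meets and then the vertical meet (the left-hand route) or by first forming vertical meets and then the horizontal meet (the right-hand route). This amounts to a compatibility of iterated meets with the (co)domain maps, and it is precisely here that the axioms relating (co)domain to meet and to (co)restriction, combined with the associativity and commutativity of meets in the underlying semilattice, must be marshalled with care. Once this single identification of central objects is secured, every remaining step is a mechanical application of the interchange axioms, so the essential difficulty is confined to the index bookkeeping rather than to any conceptual leap.
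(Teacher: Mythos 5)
Your proposal follows essentially the same route as the paper's own proof in Appendix B: the step pulling the outer vertical (co)restrictions through the horizontal composites is exactly the paper's Propositions B.2 and B.3, the collapse of nested (co)restrictions via the restriction--restriction interchange is the paper's use of property (vi), and your anticipated ``central index bookkeeping'' is precisely the paper's Proposition B.4 together with the meet--meet and meet--(co)domain calculations identifying $[u|_* u\vcod \vmeet v\vdom]$ with the corresponding corner object, before the underlying double-category interchange reorders the four-fold composite. You have correctly located the crux (the common central refinement) and the exact axioms needed at each stage, so carrying out the computations would reproduce the paper's argument.
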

Before proving this proposition, we will prove some smaller results that will more easily allow us to handle the mechanics of the proof. During the proof, we will find ourselves in the situation where we want to talk about the vertical composite of two vertical (co)restrictions that are themselves horizontal composites of horizontal (co)restrictions. This proposition allows us to break this nested composite into a four-fold product of double cells in $\mathcal{G}.$ 

For double cells $a,b,c,d\in \mathcal{G},$ define $u = a\hcod \hmeet b\hdom$ and $v = c\hcod \hmeet d \hdom.$ Consider, then, the following composite in $\cdis(\mathcal{G}):$
\begin{center}
\includegraphics[scale=0.6]{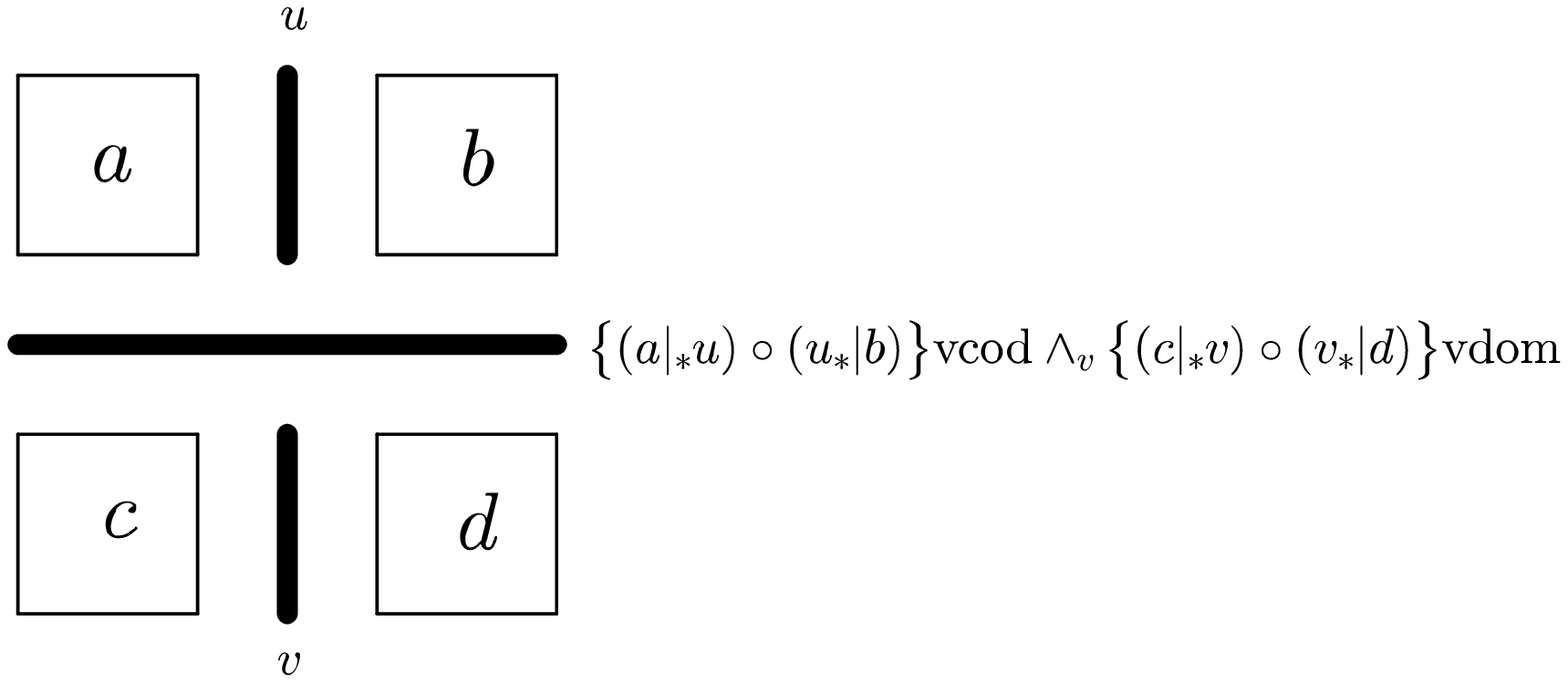}
\end{center}
\begin{proposition} \label{fact1}
\begin{enumerate}[(i)]% \hfill
\item $\bigg[(a|_*u)\circ (u{}_*|b) \bigg|_* \big\{ (a|_*u)\circ (u{}_*|b)\big\}\vcod \vmeet \big\{(c|_*v)\circ (v{}_*|d)\big\}\vdom \bigg] \\= \bigg[ (a|_*u)\bigg|_* (a|_*u)\vcod \vmeet (c|_*v)\vdom \bigg] \circ \bigg[(u{}_*|b)\bigg|_* (u{}_*|b)\vcod \vmeet (v{}_*|d)\vdom \bigg] $
\item $\bigg[ \big\{ (a|_*u)\circ (u{}_*|b)\big\}\vcod \vmeet \big\{(c|_*v)\circ (v{}_*|d)\big\}\vdom \bigg|_* (c|_*v)\circ (v{}_*|d) \bigg] \\= \bigg[ (a|_*u)\vcod \vmeet (c|_*v)\vdom \bigg|_* (c|_*v) \bigg]\circ \bigg[ (u{}_*|b)\vcod \vmeet (v{}_*|d)\vdom \bigg|_* (v{}_*|d) \bigg]$
\end{enumerate}
\end{proposition}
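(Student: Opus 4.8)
The plan is to recognize both identities as direct instances of the middle-four interchange law between the horizontal composition $\circ$ and the vertical (co)restriction, once the label appearing on the outer (co)restriction has been rewritten in factored form. I would treat (i) in full and then observe that (ii) is entirely symmetric, with the restriction interchange playing the role of the corestriction interchange. To keep the bookkeeping manageable, abbreviate $\alpha = (a|_*u)$, $\beta = (u{}_*|b)$, $\gamma = (c|_*v)$, $\delta = (v{}_*|d)$, so that $a\circledcirc b = \alpha\circ\beta$ and $c\circledcirc d = \gamma\circ\delta$. The left-hand side of (i) is then the single vertical corestriction $\big[\alpha\circ\beta \,\big|_*\, (\alpha\circ\beta)\vcod \vmeet (\gamma\circ\delta)\vdom\big]$.

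The first step is to put the label $(\alpha\circ\beta)\vcod \vmeet (\gamma\circ\delta)\vdom$ into a horizontally composed form. Since $\mathcal{G}$ is a double groupoid, the vertical (co)domain functors are functorial with respect to horizontal composition, so $(\alpha\circ\beta)\vcod = \alpha\vcod\circ\beta\vcod$ and $(\gamma\circ\delta)\vdom = \gamma\vdom\circ\delta\vdom$. Substituting these and then reading the interchange between $\circ$ and $\vmeet$ (condition \ref{compmeet}) from right to left gives
\[ (\alpha\circ\beta)\vcod \vmeet (\gamma\circ\delta)\vdom = (\alpha\vcod\circ\beta\vcod) \vmeet (\gamma\vdom\circ\delta\vdom) = (\alpha\vcod\vmeet\gamma\vdom) \circ (\beta\vcod\vmeet\delta\vdom). \]
With the label now exhibited as a horizontal composite $f\circ g$, where $f = \alpha\vcod\vmeet\gamma\vdom$ and $g = \beta\vcod\vmeet\delta\vdom$, I would apply the middle-four interchange between horizontal composition and vertical corestriction (condition \ref{restcomp}) to the cells $\alpha,\beta$, obtaining $[\alpha\circ\beta\,|_*\,f\circ g] = [\alpha|_*f]\circ[\beta|_*g]$, which is precisely the right-hand side of (i). Part (ii) follows by the identical argument using the restriction form $[f\circ g\,{}_*|\,\gamma\circ\delta] = [f{}_*|\gamma]\circ[g{}_*|\delta]$ together with the same factoring of the label.

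The only real obstacle, and the step that demands care, is verifying the side conditions guaranteeing that the cited axioms apply: that $f\lesssim\alpha\vcod$ and $g\lesssim\beta\vcod$ (immediate, since each is a meet having $\alpha\vcod$, respectively $\beta\vcod$, as one factor), and, more delicately, that the horizontal composite $f\circ g$ is actually defined, i.e. that $f\hcod = g\hdom$ as objects. This last coherence is where the remaining axioms earn their keep: using the functoriality of the horizontal (co)domains with respect to meets and (co)restrictions (conditions \ref{dommeet} and \ref{domrest}), the corners of $f$ and $g$ are computed from those of $\alpha,\beta,\gamma,\delta$, and they match exactly because $\alpha\hcod = u = \beta\hdom$ and $\gamma\hcod = v = \delta\hdom$ by construction. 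Once these corner identifications are checked, both (i) and (ii) reduce to the two interchange axioms applied in sequence.
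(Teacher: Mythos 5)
Your proposal is correct and takes essentially the same route as the paper's proof: the identical three-step chain of rewriting the label via functoriality of $\vcod$ and $\vdom$ with respect to horizontal composition, factoring it with the interchange between $\circ$ and $\vmeet$, and then applying the interchange between $\circ$ and vertical (co)restriction, with part (ii) dispatched by the same symmetry. Your side-condition checks (that the meets give $f \lesssim (a|_*u)\vcod$, $g \lesssim (u{}_*|b)\vcod$, and that the relevant horizontal composites exist because $(a|_*u)\hcod = u = (u{}_*|b)\hdom$ and $(c|_*v)\hcod = v = (v{}_*|d)\hdom$) match the justifications given in the paper.
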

\begin{proof}
We prove here only (i) -- the proof of (ii) is analogous. We apply first the functoriality of vertical domains and codomains with respect to horizontal (co)restrictions followed by the interchange law between vertical meets and horizontal composition. Finally, we apply the functoriality of vertical (co)restrictions with respect to horizontal composition.
\begin{align*}
&\bigg[(a|_*u)\circ (u{}_*|b) \bigg|_* \big\{ (a|_*u)\circ (u{}_*|b)\big\}\vcod \vmeet \big\{(c|_*v)\circ (v{}_*|d)\big\}\vdom \bigg] \\
&= \bigg[(a|_*u)\circ (u{}_*|b) \bigg|_* \big\{ (a|_*u)\vcod\circ (u{}_*|b)\vcod\big\} \vmeet \big\{(c|_*v)\vdom\circ (v{}_*|d)\vdom\big\}\bigg] \\
&= \bigg[(a|_*u)\circ (u{}_*|b) \bigg|_* \big\{ (a|_*u)\vcod\vmeet (c|_*v)\vdom\big\} \circ \big\{(u{}_*|b)\vcod \vmeet (v{}_*|d)\vdom\big\}\bigg] \\
&= \bigg[ (a|_*u)\bigg|_* (a|_*u)\vcod \vmeet (c|_*v)\vdom \bigg] \circ \bigg[(u{}_*|b)\bigg|_* (u{}_*|b)\vcod \vmeet (v{}_*|d)\vdom \bigg] 
\end{align*}
Note that $(a|_*u)\vcod\hcod = u\vcod$ and $(u{}_*|b)\vcod\hdom = u\vdom$ so that the composite $(a|_*u)\vcod\circ (u{}_*|b)\vcod$ is defined. A similar calculation shows that $(c|_*v)\vdom\circ (v{}_*|d)\vdom$ is defined. This justifies the second equality.

By the definition of meet, $ (a|_*u)\vcod \vmeet (c|_*v)\vdom \lesssim (a|_*u)\vcod$ so that the corestriction $\big[ (a|_*u)\big|_* (a|_*u)\vcod \vmeet (c|_*v)\vdom \big]$ is defined. Similarly, $\big[(u{}_*|b)\big|_* (u{}_*|b)\vcod \vmeet (v{}_*|d)\vdom \big]$ is defined and this justifies the third equality.
\end{proof}
The following Proposition is analogous to the previous in the sense that we are now in the following situation involving the horizontal composite of horizontal (co)restrictions consisting of vertical (co)restrictions. We can write these, too, as a four-fold product of double cells in $\mathcal{G}.$

For double cells $a,b,c,d\in \mathcal{G},$ define $f = a\vcod \vmeet c\vdom$ and $g=b\vcod \vmeet d\vdom.$ Consider, then, the following composite in $\cdis(\mathcal{G}):$
\begin{center}
\includegraphics[scale=0.6]{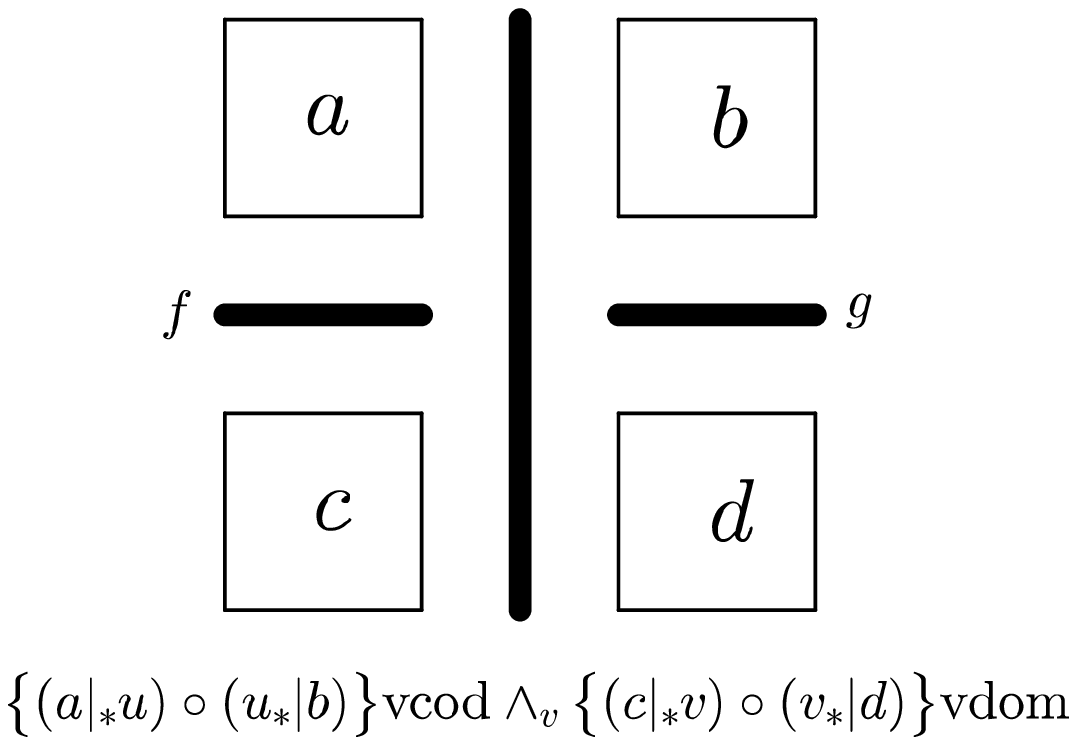}
\end{center}
\begin{proposition} \label{fact2}
\begin{enumerate}[(i)] \hfill
\item $\bigg([a|_*f]\bullet [f{}_*|c] \bigg|_* \big\{ [a|_*f]\bullet [f{}_*|c]\big\}\hcod \hmeet \big\{[b|_*g]\bullet [g{}_*|d]\big\}\hdom \bigg) \\= \bigg( [a|_*f]\bigg|_* [a|_*f]\hcod \hmeet [b|_*g]\hdom \bigg) \bullet \bigg([f{}_*|c]\bigg|_* [f{}_*|c]\hcod \hmeet [g{}_*|d]\hdom \bigg) $
\item $\bigg(\big\{ [a|_*f]\bullet [f{}_*|c]\big\}\hcod \hmeet \big\{[b|_*g]\bullet [g{}_*|d]\big\}\hdom{}_*\bigg| [b|_*g]\bullet [g{}_*|d]\bigg) \\= \bigg([a|_*f]\hcod \hmeet [b|_*g]\hdom {}_*\bigg| [b|_*g] \bigg)\bullet \bigg( [f{}_*|c]\hcod \hmeet [g{}_*|d]\hdom {}_*\bigg| [g{}_*|d]\bigg)$
\end{enumerate}
\end{proposition}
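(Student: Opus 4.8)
The plan is to recognise that Proposition~\ref{fact2} is the exact horizontal--vertical dual of Proposition~\ref{fact1}: replacing $\circ$ by $\bullet$, $\vmeet$ by $\hmeet$, $\vcod$ by $\hcod$, $\vdom$ by $\hdom$, and the round horizontal (co)restrictions by the square vertical ones (and conversely) carries the statement of \ref{fact1} onto that of \ref{fact2}. Because the full axiom list in Appendix~\ref{appendixa} records each interchange law together with its mirror image, every property used to prove \ref{fact1} has a dual available, so the argument transfers with the roles of the two directions swapped. As before, I would prove only (i); the identity (ii) is obtained by the symmetric computation, with restrictions in place of corestrictions.

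For (i) I would rewrite the left-hand side by the dualised three-step chain. First, distribute $\hcod$ and $\hdom$ across the vertical composites, using that the horizontal (co)domain (the right- and left-edge projection) is a functor with respect to vertical composition, so that $\{[a|_*f]\bullet[f{}_*|c]\}\hcod = [a|_*f]\hcod \bullet [f{}_*|c]\hcod$ and $\{[b|_*g]\bullet[g{}_*|d]\}\hdom = [b|_*g]\hdom \bullet [g{}_*|d]\hdom$. Second, apply the interchange law between horizontal meets and vertical composition to turn the horizontal meet of these two vertical composites into the single vertical composite $\big([a|_*f]\hcod \hmeet [b|_*g]\hdom\big)\bullet\big([f{}_*|c]\hcod \hmeet [g{}_*|d]\hdom\big)$. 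Third, apply functoriality of horizontal corestriction with respect to vertical composition, i.e.\ $(p\bullet q \,|_*\, r\bullet s) = (p|_* r)\bullet(q|_* s)$, to distribute the outer corestriction over the vertical composite; this yields precisely the right-hand side.

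What remains is the same bookkeeping as in \ref{fact1}, dualised: certifying that each intermediate expression is defined. I would check that $[a|_*f]\hcod \bullet [f{}_*|c]\hcod$ is a legitimate vertical composite, since the matching corner of each horizontal codomain is the corresponding endpoint of $f$ (and likewise that the composite $[b|_*g]\hdom \bullet [g{}_*|d]\hdom$, built from $g$, is defined); and that each horizontal meet satisfies $[a|_*f]\hcod \hmeet [b|_*g]\hdom \leq [a|_*f]\hcod$, respectively $[f{}_*|c]\hcod \hmeet [g{}_*|d]\hdom \leq [f{}_*|c]\hcod$, so that the two outer corestrictions on the right are defined. The main obstacle is notational rather than conceptual: one must track which corner of $f$ and $g$ each projection lands on while passing through the dualised chain. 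Since the double inductive groupoid axioms are stated symmetrically, no phenomenon arises here that was absent from \ref{fact1}.
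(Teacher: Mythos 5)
Your proposal is correct and is essentially the paper's own proof: the paper establishes (i) by exactly your three-step chain --- functoriality of $\hcod$ and $\hdom$ with respect to $\bullet$, then the interchange of $\hmeet$ with vertical composition, then the interchange $\big(p\bullet q\,|_*\,r\bullet s\big) = \big(p|_*r\big)\bullet\big(q|_*s\big)$ --- treating the whole statement as the horizontal--vertical dual of Proposition~\ref{fact1}, proving only (i) with (ii) analogous, and deferring the composability and definedness bookkeeping to the checks made there. Nothing further is needed.
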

\begin{proof}
We prove here only (i) -- the proof of (ii) is analogous. We apply first the functoriality of horizontal  domains and codomains with respect to vertical (co)restrictions followed by the interchange law between horizontal meets and vertical composition. Finally, we apply the functoriality of horizontal (co)restrictions with respect to vertical composition.
\begin{align*}
& \bigg([a|_*f]\bullet [f{}_*|c] \bigg|_* \big\{ [a|_*f]\bullet [f{}_*|c]\big\}\hcod \hmeet \big\{[b|_*g]\bullet [g{}_*|d]\big\}\hdom \bigg) \\
&= \bigg([a|_*f]\bullet [f{}_*|c] \bigg|_* \big\{ [a|_*f]\hcod\bullet [f{}_*|c]\hcod\big\} \hmeet \big\{[b|_*g]\hdom\bullet [g{}_*|d]\hdom\big\} \bigg) \\
&= \bigg([a|_*f]\bullet [f{}_*|c] \bigg|_* \big\{ [a|_*f]\hcod\hmeet [b|_*g]\hdom\big\} \bullet \big\{[f{}_*|c]\hcod \hmeet [g{}_*|d]\hdom\big\} \bigg) \\
&= \bigg( [a|_*f]\bigg|_* [a|_*f]\hcod \hmeet [b|_*g]\hdom \bigg) \bullet \bigg([f{}_*|c]\bigg|_* [f{}_*|c]\hcod \hmeet [g{}_*|d]\hdom \bigg)
\end{align*}
The argument justifying the second and third equalities in this proof is exactly analogous to that in the proof of Proposition \ref{fact2}.
\end{proof}
The following proposition follows almost immediately from the axioms of a double inductive groupoid. These identities will be used to re-express certain arrows by which we will want to (co)restrict.
\begin{proposition} \label{fact3}
\begin{enumerate}[(i)] %\hfill
\item $(a|_*u) \vcod \vmeet (c|_*v)\vdom = (a \vcod \vmeet c\vdom |_* u\vcod \vmeet v\vdom)$
\item $(u{}_*|b)\vcod \vmeet (v{}_*|d)\vdom = (u\vcod \vmeet v\vdom {}_*| b\vcod \vmeet d\vdom) $
\item $[a|_*f]\hcod \hmeet [b|_g]\hdom = [a\hcod \hmeet b\hdom|_* f\hcod \hmeet g\hdom]$
\item $[f{}_*|c]\hcod \hmeet [g{}_*|d]\hdom = [f\hcod \hmeet g\hdom {}_*| c\hcod \hmeet d\hdom]$
\end{enumerate}
\end{proposition}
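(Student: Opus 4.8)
The plan is to read each of the four identities as a two-step rewriting that simply chains together two of the defining axioms of a double inductive groupoid, and to prove only (i) in full since (ii)--(iv) are obtained from it by the evident symmetries (swapping the restriction $|_*$ for the corestriction ${}_*|$, or exchanging the vertical data $\vcod,\vdom,\vmeet$ and round brackets for the horizontal data $\hcod,\hdom,\hmeet$ and square brackets). The two axioms doing all the work are the functoriality of the vertical and horizontal (co)domains with respect to the transverse (co)restrictions (property~\ref{domrest}) and the interchange between a meet and a transverse (co)restriction (property~\ref{restmeet}).

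For (i), I would first push the $\vcod$ and $\vdom$ operators inside the corestrictions using property~\ref{domrest}: its clauses give $(a|_*u)\vcod = (a\vcod |_* u\vcod)$ and $(c|_*v)\vdom = (c\vdom |_* v\vdom)$, so that the left-hand side becomes $(a\vcod |_* u\vcod)\vmeet (c\vdom |_* v\vdom)$. I would then collapse this vertical meet of two corestrictions by the relevant clause of property~\ref{restmeet}, namely $(e|_* f)\vmeet (g|_* h) = (e\vmeet g |_* f\vmeet h)$, applied with $e=a\vcod$, $f=u\vcod$, $g=c\vdom$, $h=v\vdom$. This yields exactly $(a\vcod \vmeet c\vdom |_* u\vcod \vmeet v\vdom)$, the right-hand side. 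Parts (ii)--(iv) run verbatim after replacing the cited clauses by their ${}_*|$- or horizontal counterparts.

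The only point that needs genuine attention is well-definedness: each axiom may be invoked only once one knows that the restricting object or arrow actually sits below the appropriate (co)domain in the relevant partial order, so that the (co)restriction being written exists. This is where I expect whatever small friction there is to live, but it is not a real obstacle. The meets themselves always exist because the vertical and horizontal arrows each form a meet-semilattice, and the required order relations (for example, that the object $u\vcod\vmeet v\vdom$ lies below the horizontal codomain of $a\vcod\vmeet c\vdom$) follow immediately from the definition of the meet on objects together with the compatibility of (co)domains and meets already packaged into the ambient groupoid. Once well-definedness is in hand each line is a single citation, so the proposition is, as advertised, essentially immediate; the content of the proof is merely to record which clause is used at which step.
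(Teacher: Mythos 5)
Your proposal is correct and matches the paper's own proof essentially verbatim: the paper likewise proves only (i), first applying the functoriality of vertical (co)domains with respect to horizontal (co)restrictions to get $(a|_*u)\vcod \vmeet (c|_*v)\vdom = (a\vcod |_* u\vcod)\vmeet (c\vdom |_* v\vdom)$, and then collapsing the meet via the interchange law between vertical meets and horizontal (co)restrictions, declaring (ii)--(iv) analogous. Your added remarks on well-definedness are sound and, if anything, slightly more careful than the paper, which omits them for this proposition.
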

\begin{proof}
We will prove only (i) -- the proofs of (ii), (iii) and (iv) are analogous. We apply first the functoriality of vertical domains and codomains with respect to horizontal (co)restrictions followed by the interchange law for vertical meets and horizontal (co)restrictions.
\begin{align*}
(a|_*u) \vcod \vmeet (c|_*v)\vdom &= (a\vcod |_*u\vcod) \vmeet (c\vdom|_*v\vdom) \\
&= (a \vcod \vmeet c\vdom |_* u\vcod \vmeet v\vdom)
\end{align*}
\end{proof}
\noindent{\bf Proof of Proposition \ref{m4dig}} \\
Consider the semigroup product
\[ (a\circledcirc b)\odot (c\circledcirc d) = \big[a\circledcirc b \big|_* (a\circledcirc b)\vcod \vmeet (c\circledcirc d)\vdom\big] \bullet \big[(a\circledcirc b)\vcod \vmeet (c\circledcirc d)\vdom {}_*\big| c\circledcirc d]. \]
Note that 
\[ a\circledcirc b = (a |_* a\hcod \hmeet b\hdom) \circ (a\hcod \hmeet b\hdom {}_*| b)\]
and 
\[ c\circledcirc d = (c |_* c\hcod \hmeet d\hdom) \circ (c\hcod \hmeet d\hdom {}_*| d)\]
so that by Proposition \ref{fact1} with $u = a\hcod \hmeet b\hdom$ and $v = c\hcod \hmeet d\hdom,$
\begin{align*}
&(a\circledcirc b)\odot (c\circledcirc d) \\
&\hspace{1pc}= \bigg\{\bigg[ (a|_*u)\bigg|_* (a|_*u)\vcod \vmeet (c|_*v)\vdom \bigg] \circ \bigg[(u{}_*|b)\bigg|_* (u{}_*|b)\vcod \vmeet (v{}_*|d)\vdom \bigg]\bigg\}\\
&\hspace{3pc} \bullet \bigg\{ \bigg[ (a|_*u)\vcod \vmeet (c|_*v)\vdom {}_*\bigg| (c|_*v) \bigg]\circ \bigg[ (u{}_*|b)\vcod \vmeet (v{}_*|d)\vdom {}_*\bigg| (v{}_*|d) \bigg]\bigg\}.
\end{align*}
Using the interchange law of double cells of a double inductive groupoid, we have
{\footnotesize \begin{align*}
&(a\circledcirc b)\odot (c\circledcirc d) \\
&{ \hspace{1pc}=\bigg\{\bigg[ (a|_*u)\bigg|_* (a|_*u)\vcod \vmeet (c|_*v)\vdom \bigg] \bullet\bigg[ (a|_*u)\vcod \vmeet (c|_*v)\vdom {}_*\bigg| (c|_*v) \bigg] \bigg\}}\\
&\hspace{3pc} \circ \bigg\{\bigg[(u{}_*|b)\bigg|_* (u{}_*|b)\vcod \vmeet (v{}_*|d)\vdom \bigg] \bullet  \bigg[ (u{}_*|b)\vcod \vmeet (v{}_*|d)\vdom {}_*\bigg| (v{}_*|d) \bigg]\bigg\}.
\end{align*}}
Applying Propositions \ref{fact3}(i) and \ref{fact3}(ii), we have
{\footnotesize \begin{align*} 
&(a\circledcirc b)\odot (c\circledcirc d)\\
&{ = \bigg\{\bigg[ (a|_*u)\bigg|_* (a \vcod \vmeet c\vdom |_* u\vcod \vmeet v\vdom) \bigg] \bullet\bigg[ (a \vcod \vmeet c\vdom |_* u\vcod \vmeet v\vdom) {}_*\bigg| (c|_*v) \bigg] \bigg\}}\\
&{\hspace{1pc} \circ \bigg\{\bigg[(u{}_*|b)\bigg|_* (u\vcod \vmeet v\vdom {}_*| b\vcod \vmeet d\vdom) \bigg] \bullet  \bigg[ (u\vcod \vmeet v\vdom {}_*| b\vcod \vmeet d\vdom) {}_*\bigg| (v{}_*|d) \bigg]\bigg\}. }
\end{align*}}
Using Property (\ref{restcomm}) from Definition \ref{doubleinductivegroupoids} of double inductive groupoids, we have
{\footnotesize \begin{align*}
&(a\circledcirc b)\odot (c\circledcirc d)\\
&{\hspace{1pc}=\bigg\{\bigg( [a|_*a \vcod \vmeet c\vdom]\bigg|_* [u |_* u\vcod \vmeet v\vdom] \bigg) \bullet\bigg( [a \vcod \vmeet c\vdom {}_*| c] \bigg|_* [u\vcod \vmeet v\vdom {}_*| v] \bigg) \bigg\}}\\
&{\hspace{1pc} \circ \bigg\{\bigg([u|_*u\vcod \vmeet v\vdom]{}_*\bigg| [b |_* b\vcod \vmeet d\vdom] \bigg) \bullet  \bigg( [u\vcod \vmeet v\vdom {}_*| v] {}_*\bigg| [b\vcod \vmeet d\vdom{}_*|d] \bigg)\bigg\}.}
\end{align*}}
Note that, by functoriality of (co)domains, commutativity of (co)domains and interchange law of vertical and horizontal meets, we have
{\small \begin{align*}
[u|_* u\vcod \vmeet v\vdom] 
&= [u|_* (a\hcod \hmeet b\hdom)\vcod \vmeet (c\hcod \hmeet d\hdom)\vdom] \\
&= [u|_* (a\vcod\hcod \hmeet b\vcod\hdom) \vmeet (c\vdom\hcod \hmeet d\vdom\hdom)]\\
&= [u|_* (a\vcod\hcod \vmeet c\vdom\hcod ) \hmeet (b\vcod\hdom \vmeet d\vdom\hdom)]\\
&= [a\hcod \hmeet b\hdom|_* (a\vcod \vmeet c\vdom)\hcod  \hmeet (b\vcod \vmeet d\vdom)\hdom].
\end{align*}}
We leave the similar calculations involving the other three (co)restrictees to the reader. Let $f = a\vcod \vmeet c\vdom$ and $g = b\vcod \vmeet d\vdom.$ Then by Propositions \ref{fact3}(iii) and \ref{fact3}(iv), we have
\begin{align*}
&(a\circledcirc b)\odot (c\circledcirc d)\\
&\hspace{1pc}=\bigg\{ \bigg( [a|_*f]\bigg|_* [a|_*f]\hcod \hmeet [b|_*g]\hdom \bigg) \bullet \bigg([f{}_*|c]\bigg|_* [f{}_*|c]\hcod \hmeet [g{}_*|d]\hdom \bigg)\bigg\} \\
&\hspace{3pc} \circ \bigg\{\bigg([a|_*f]\hcod \hmeet [b|_*g]\hdom {}_*\bigg| [b|_*g]\bigg) \bullet \bigg( [f{}_*|c]\hcod \hmeet [g{}_*|d]\hdom {}_*\bigg| [g{}_*|d]\bigg)\bigg\}.
\end{align*}
By Proposition \ref{fact2},
\begin{align*}
(a\circledcirc b)\odot (c\circledcirc d)
&= \bigg([a|_*f]\bullet [f{}_*|c] \bigg|_* \big\{ [a|_*f]\bullet [f{}_*|c]\big\}\hcod \hmeet \big\{[b|_*g]\bullet [g{}_*|d]\big\}\hdom \bigg) \\
&\hspace{3pc} \circ \bigg(\big\{ [a|_*f]\bullet [f{}_*|c]\big\}\hcod \hmeet \big\{[b|_*g]\bullet [g{}_*|d]\big\}\hdom{}_*\bigg| [b|_*g]\bullet [g{}_*|d]\bigg).
\end{align*}
Recall that 
\[ a\odot c = [a|_* a\vcod \vmeet c\vdom] \bullet [a\vcod \vmeet c\vdom {}_* c]\]
and
\[ b\odot d = [b |_* b\vcod \vmeet c\vdom] \bullet [b\vcod \vmeet c\vdom {}_*| d].\]
By our choice of $f$ and $g,$ then,
\begin{align*}
&(a\circledcirc b)\odot (c\circledcirc d)\\
&\hspace{1pc}= \bigg(a\odot c \bigg|_* \big\{ a\odot c\big\}\hcod \hmeet \big\{b\odot d \big\}\hdom \bigg)   \circ \bigg(\big\{a\odot c\big\}\hcod \hmeet \big\{b\odot d\big\}\hdom{}_*\bigg| b\odot g\bigg) \\
&\hspace{1pc}= (a\odot c) \circledcirc (b\odot d).
\end{align*}
\end{appendices}

\bibliographystyle{plain}
\bibliography{/Users/darien/Dropbox/reference}

\end{document}